\numberwithin{equation}{section}
\newtheorem{thm}{Theorem}[section]
\newtheorem{prop}{Proposition}[section]
\newtheorem{lem}{Lemma}[section]
\newcommand{\abs}[1]{\left| #1\right|}
\newcommand{\norm}[1]{\left\|#1\right\|}
\newcommand{\paren}[1]{\left(#1\right)}
\newcommand{\bracket}[1]{\left[#1\right]}
\newcommand{\set}[1]{\left\{#1\right\}}
\newcommand{\bigo}{\mathrm{O}}
\newcommand{\eps}{\epsilon}
\newcommand{\grad}{\nabla}
\newcommand{\dist}{\mathrm{dist}}
\newcommand{\refe}{\mathrm{ref}}
\newcommand{\simu}{\mathrm{sim}}
\newcommand{\phase}{\mathrm{phase}}
\newcommand{\corr}{\mathrm{corr}}
\newcommand{\ser}{\mathrm{s}}
\newcommand{\para}{\mathrm{p}}
\newcommand{\Texit}{T}
\newcommand{\tphase}{t_{\mathrm{phase}}}
\newcommand{\tcorr}{t_{\mathrm{corr}}}
\newcommand{\tsimu}{t_{\mathrm{sim}}}
\newcommand{\inner}[2]{\left\langle #1,#2\right\rangle}
\newcommand{\E}{\mathbb{E}}
\newcommand{\prob}{\mathbb{P}}
\newcommand{\calP}{\mathcal{P}}
\newcommand{\R}{\mathbb{R}}
\newcommand{\supp}{\mathrm{supp}\:}
\begin{document}

\title{Numerical Analysis of Parallel Replica Dynamics}
\author{Gideon Simpson} \email{gsimpson@umn.edu}
\author{Mitchell Luskin}\email{luskin@umn.edu}

\date{\today}
\maketitle

\begin{abstract}
  Parallel replica dynamics is a method for accelerating the
  computation of processes characterized by a sequence of infrequent
  events.  In this work, the processes are governed by
  the overdamped Langevin equation.  Such processes spend much of
  their time about the minima of
  the underlying potential, occasionally transitioning into different
  basins of attraction. The essential idea of parallel replica
  dynamics is that the exit time distribution from a given well for a
  single process can be approximated by the minimum of the exit time
  distributions of $N$ independent identical processes, each run for
  only $1/N$-th the amount of time.

  While promising, this leads to a series of numerical analysis
  questions about the accuracy of the exit distributions.
  Building upon the recent work in \cite{Bris:2011p13365}, we prove a unified
  error estimate on the exit distributions of the algorithm against an
  unaccelerated process. Furthermore, we study a dephasing mechanism,
  and prove that it will successfully complete.
\end{abstract}

\section{Introduction}

Parallel replica dynamics (ParRep) is a numerical tool first
introduced by Voter in \cite{Voter:1998p13729} (see
also\cite{Voter:2002p12678,perez2009accelerated}) for accelerating the
simulation of stochastic processes characterized by a sequence of
infrequent, but rapid, transitions from one state to another.  A
standard and important problem in which such a separation of scales is
present is the migration of defects through a crystalline lattice; see
\cite{perez2009accelerated} and references therein for examples.

Roughly, the idea behind parallel replica dynamics is as follows.
Suppose a trajectory spends time $t$ in a particular state, before
transitioning into another.  Furthermore, assume $t$ is large,
relative to the scale of the time step discretization.  We wish to
avoid directly simulating a single realization for time $t$.  We
approximate the simulation of a single trajectory for time $t$ with
$N$ independent copies, each simulated for time $t/N$, and follow the
particular trajectory that escapes first.  This holds out
the promise for a linear speedup with the number of independent
realizations we are able to simulate.

Of course, this is not exact, and error is introduced.  A particular
concern is error in the exit distributions of the system as it
migrates from one state to another -- does ParRep disrupt the state to
state dynamics?  Inspired by the tools proposed in
\cite{Bris:2011p13365}, we prove an error estimate on the exit
distributions over a single ``cycle'' of ParRep (the transition from
one state to the next).

\subsection{The Algorithm}

We assume the system we wish to accelerate evolves according to the
overdamped Langevin equation,
\begin{equation}
  \label{e:odlang}
  dX_t = - \grad V(X_t) dt + \sqrt{2 \beta^{-1}} dB_t, \quad X_t \in \R^{n},
\end{equation}
where $B_t$ is a Wiener process and $\beta$ is proportional to inverse
temperature.  Though ParRep was originally developed for the
Langevin equations, it is readily adapted to this problem.

We next assume that our system is such that $V$ has a denumerable set
of local minima, $x_j$, $j=1,2,\ldots$ For each minima, we associate a
set $W_j\subset \R^{n}$, the ``well.''  $W_j$ could be the basin of
attraction of $x_j$; if $y(t)$ solves the ODE
\[
\dot y= - \grad V(y),\quad y(0) = y_0\in \R^n,
\]
then
\[
W_j = \set{y_0 : \lim_{t\to + \infty} y(t) = x_j}.
\]
However, this definition is not essential; for the sake of our
analysis, $W_j$ need only be a bounded set in $\R^n$ with sufficiently
regular boundary.

This motivates defining the well selection function,
\begin{equation}
  \label{e:wellfunction}
  \mathcal{S} :\R^{n} \to \mathbb{N},
\end{equation}
which identifies the basin associated with a given position.
Associated with this is the ``coarse grained'' trajectory,
\begin{equation}
  \label{e:coarse_trajectory}
  \mathcal{S}_t \equiv \mathcal{S}(X_t)
\end{equation}
which only identifies the present well.

If the wells are ``deep'' with well-defined minima, then $X_t$ will
infrequently transition from one to another.  Such a well corresponds
to a {\it metastable} state.  Much of the simulation
time will be spent waiting for a jump to occur.  The goal of ParRep is
to reduce this computational expense by providing a
satisfactory approximation of the form
\begin{equation}
  \mathcal{S}_t \approx \mathcal{S}^{\rm ParRep}_t.
\end{equation}
In other words, we are willing to sacrifice information about where the
trajectory is within each well, for the sake of rapidly computing the
sequence of wells the trajectory visits.

We now describe the ParRep algorithm in the following steps: the
decorrelation step; the dephasing step; and the parallel step.  These
steps are diagrammed in Figures \ref{f:parrep} and \ref{f:dephasing}.
We assume that the reference process $X_t^\refe$ enters well $W_j$ at
time $\tsimu$.

\begin{figure}

  \includegraphics[width=8cm]{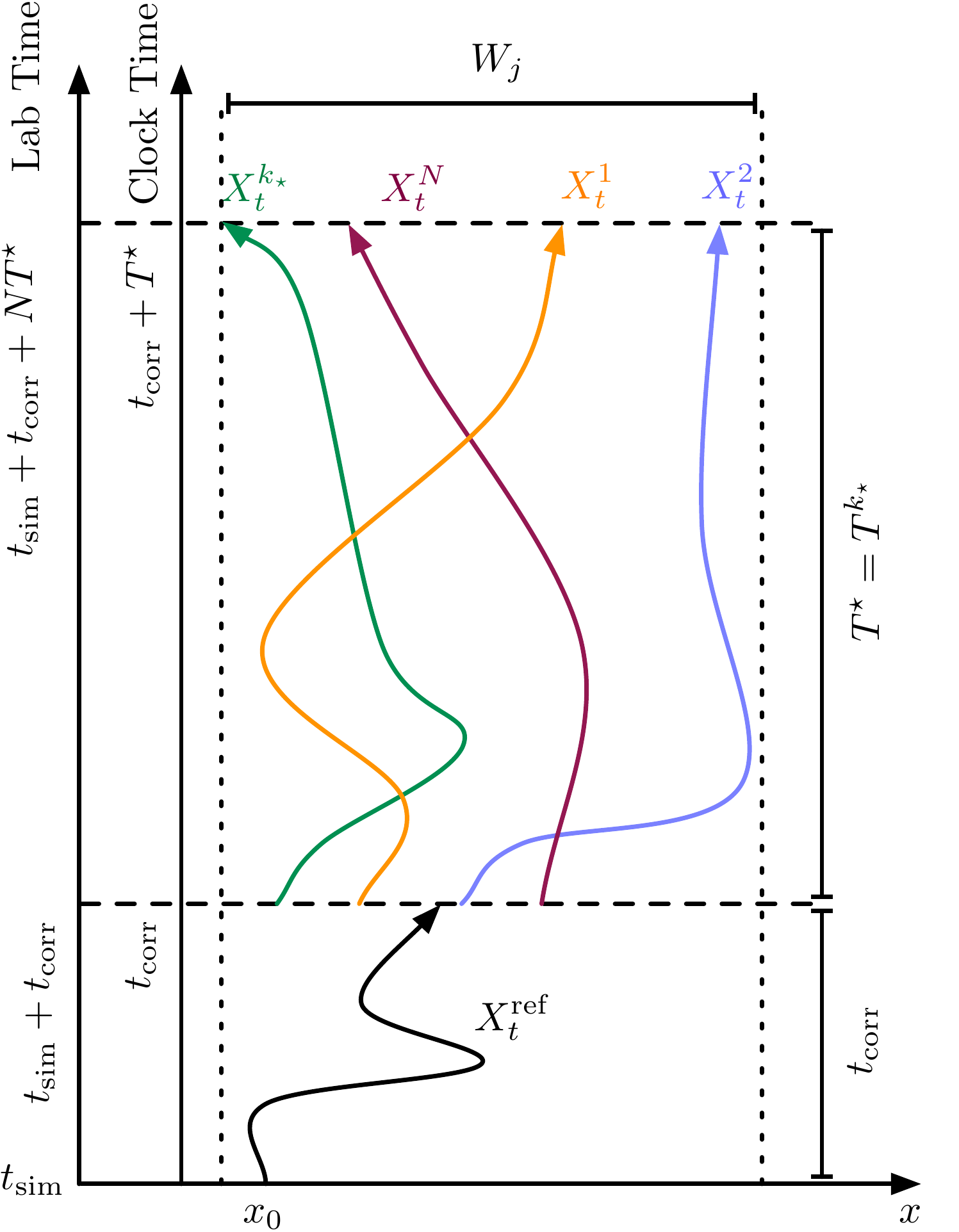}
  \caption{An illustration of the decorrelation and parallel steps of
    the ParRep algorithm in the case that the reference walker never
    leaves well $W_j$. $X^{k_\star}_t$ is the first process to exit
    the well, doing so at the computer time $ \tcorr + T^{\star}$.
    This is then translated into the lab, or physical, time $ \tsimu +
    \tcorr + N T^{\star}$.  See Figure \ref{f:dephasing} for an
    illustration of a dephasing step.}
  \label{f:parrep}
\end{figure}

\begin{figure}
  \includegraphics[width=6cm]{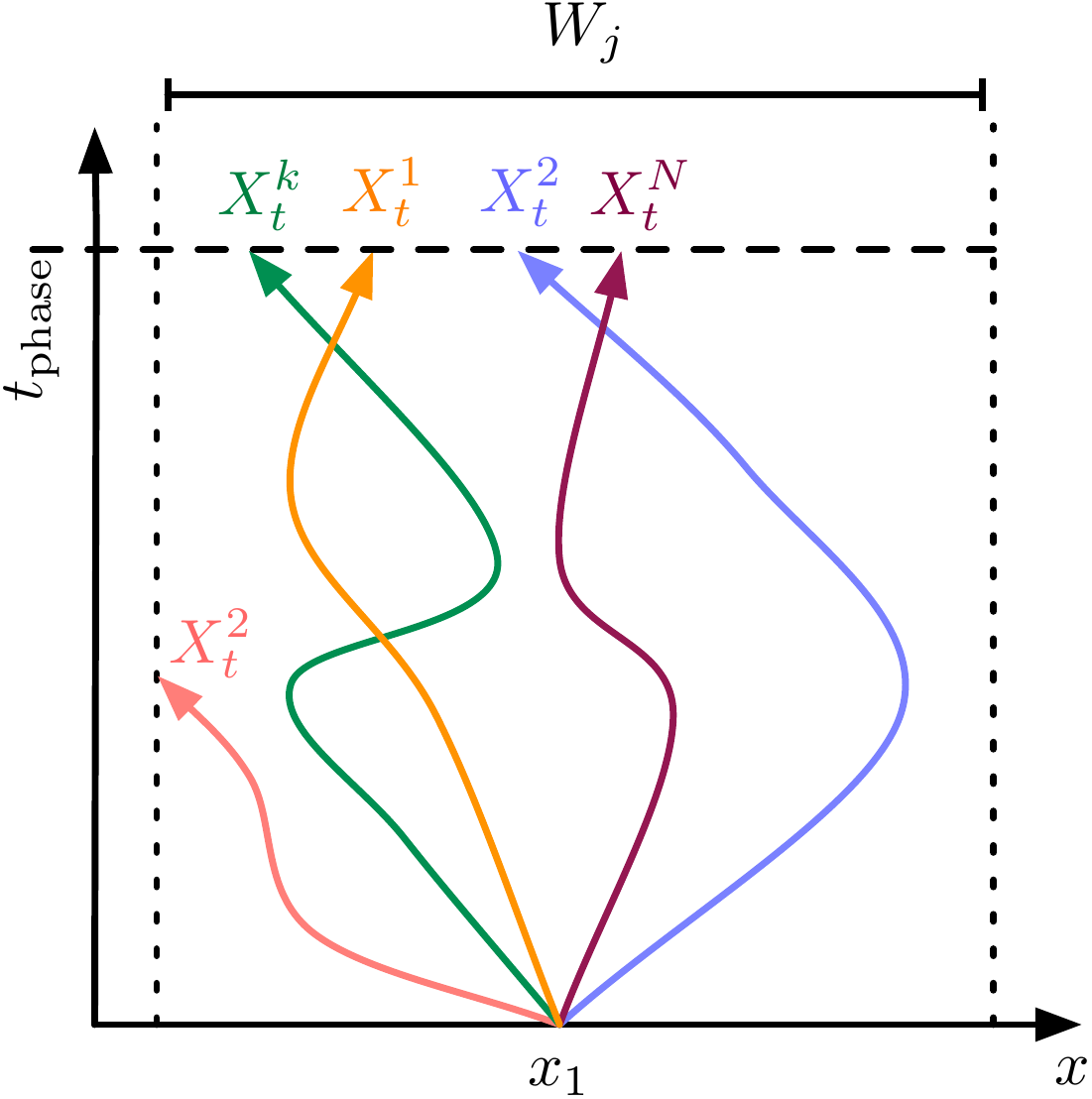}
  \caption{An illustration of a dephasing step for the ParRep
    algorithm.  In this implementation, the replicas all start from
    the same position; $\mu_{\rm phase}^0 = \delta_{x_1}$.  When
    $X^2_t$ leaves before $\tphase$, it is relaunched from the same
    position.}
  \label{f:dephasing}
\end{figure}

\begin{enumerate}
\item {\it Decorrelation Step}: Let $X_t^{\refe}$ evolve under
  \eqref{e:odlang} for $t_{\simu}\leq t\leq t_{\simu}+{\tcorr}$.
  \begin{itemize}
  \item If
    \[
    \mathcal{S}(X_t^{\refe}) = \mathcal{S}(X_{t_\simu}^{\refe})
    \]
    for all $t_{\simu}\leq t\leq t_{\simu} + {\tcorr} $, then time
    advances to $t_\simu + {\tcorr}$ and proceed.

  \item Otherwise, denote the first exit time from the well,
    \begin{equation*}
      T = \inf\set{t
        \mid \mathcal{S}(X_{t_\simu + t}^{\refe}) \neq
        \mathcal{S}(X_{t_\simu}^{\refe}) }
    \end{equation*}
    and time advances to $t_\simu + T$.  Return to the beginning of the
    decorrelation step in the new well.

  \end{itemize}

\item {\it Dephasing Step:} In conjunction with the decorrelation
  step, we launch $N$ replicas with starting positions drawn from
  distribution $\mu_{\phase}^{0}$.  These are run for $\tphase$ amount
  of time, the {\it dephasing time}.  If at any time before $\tphase$
  a replica leaves the well, it is restarted.  A replica has
  successfully dephased if it remains in the well for all of $\tphase$.

  At the completion of the decorrelation and dephasing steps, assuming
  the reference walker has not exited, we have $N$ independent walkers
  with the same distribution.  We discard the reference process.  If
  at any time during the dephasing process the reference walker
  leaves the well, the dephasing process terminates and the replicas
  are discarded.

\item {\it Parallel Step}: We now let the $N$ replicas evolve
  independently and define
  \begin{subequations}
    \begin{align}
      k_\star & = \underset{k}{\rm argmin}\; T^k,\\
      X^\star_t & = X^{k_\star}_t,\\
      T^\star &= T^{k_\star}.
    \end{align}
  \end{subequations}
  The system advances to the next well:
  \begin{subequations}
    \begin{gather}
      t_\simu \mapsto  t_\simu + \tcorr + N T^\star \\
      X^\refe_{ t_\simu + \tcorr + N T^\star } = X^{\star}_{
        T^\star}.
    \end{gather}
  \end{subequations}
  Finally, we return to the decorrelation step.

\end{enumerate}

This is a different dephasing algorithm than described in
\cite{Bris:2011p13365}.  There, after the decorrelation step, the
replicas are initiated at the the position of the reference process
and run for $\tphase$. The simulation clock is not advanced, and
replicas are replaced as need be should they exit the well.  Our
implementation has the advantage that no processor sits idle.

The reader may wonder why we would want to have a distinguished
reference process -- why not relaunch the reference process, as we
would a replica, should it exit?  We retain this feature to allow for
realizations where the process is in a well for a very short period,
far less than the decorrelation time.  These correlated events, such as {\it
  recrossings}, appear in serial simulations and should be preserved.  One may also ask why we
discard the reference process.  This is to simplify the analysis, as
it permits us to declare that the $N$ replicas are drawn from the same
distribution when the parallel step begins.

In addition to the choice of $\tcorr$ and $\tphase$, there is also the
question of what $\mu_{\phase}^0$ should be.  Again, there is
significant flexibility.  One possibility is to allow the reference
process to evolve for some amount of time, and then
the replicas could be launched from its position.  A method used in
practice is to find a local minima associated with the well, and
initiate the replicas from that position, \cite{Perez:fk}.  We
emphasize that the dephasing mechanism need not depend on any
information associated with the reference process.

In principle, ParRep offers a nearly linear speedup with the number of
independent replicas, provided $\tcorr$ is short relative to the
typical exit time.  With the explosion in the availability of
distributed computing clusters, parallel replica dynamics is an
attractive tool for studying infrequent event processes.

\subsection{Main Results}

The essential aspects of a process undergoing infrequent events are
\begin{itemize}
\item How often does it transition from one state to another?
\item What state does it transition to?
\end{itemize}
These properties are captured in $\mathcal{S}_t$. To assess how well
$\mathcal{S}_t^{\rm ParRep}$ approximates it, we are motivated to
first consider the exit distribution of a process, and how well it is
preserved.  In \cite{Bris:2011p13365}, the authors proposed a rigorous
framework in which to study ParRep. The purpose of this study is to
unify those ideas and assess the total error, over a single cycle of
ParRep, as a function of the parameters.

{\bf Note:} For brevity, we shall now take $t_\simu = 0$ and $W_j =
W$.  Throughout our paper, we shall assume:
\begin{itemize}
\item $W\subset \R^n$ is bounded;
\item $\partial W$ is sufficiently smooth;
\item $V$ is sufficiently smooth on $\overline{W}$.
\end{itemize}
Though $W$ need not correspond to a basin of
attraction, we shall continue to call it a well.

To motivate our results, we introduce some important objects.  Let
$\mu_t$ denote the law of $X_t$, conditioned on having not
left the well:
\begin{equation}
  \label{e:lawXt}
  \mu_t(A) = \prob^{\mu_0}\bracket{X_t \in A\mid T> t} =
  \frac{\prob^{\mu_0}\bracket{X_t \in A, T>
      t}}{\prob^{\mu_0}\bracket{T> t}}.
\end{equation}
The above expression is the probability of finding the processes,
$X_t$, in the set $A\subset W$, at time $t$, conditioned on the exit
time from the well, $T$, being beyond $t$, and $X_0$ being initially
distributed by $\mu_0$.  Additional details on our notation are given
below, in Section \ref{s:notation}. Under certain assumptions, the
limit
\begin{equation}
  \label{e:yaglom}
  \lim_{t\to \infty} \mu_t = \nu,
\end{equation}
exists. $\nu$ is the {\it quasistationary distribution} (QSD) and
characterizes the long term survivors of \eqref{e:odlang} in well
$W$. The properties of $\nu$ are reviewed for the reader below in
Section \ref{s:prelim}.

In the following theorems, we shall refer to ``admissible
distributions.''  This class is quite broad and includes the Dirac
distribution.  It is defined and explored in subsequent
sections. First, we have the following result on the convergence of
the exit distribution of $X_t.$
\begin{thm}[Convergence to the QSD]
  \label{t:decorr_err_soft}
  Assume $\mu_0$ is admissible.  There exist positive constants
  $\lambda_2 > \lambda_1$, $C$ and $\underline{t}$, such that for all
  $t \geq \underline{t}$ and bounded and measurable $f(\tau,
  \xi):\R^+\times \partial W\to \R$ we have
  \begin{equation*}
    \abs{\E^{\mu_t}\bracket{ f(T, X_{T})} -\E^{\nu}
      \bracket{f(T,
        X_{T})}  } \leq C {\norm{f}_{L^\infty}}  e^{-(\lambda_2 - \lambda_1) t}.
  \end{equation*}
  The constant $C$ is independent of $t$ and $f$.
\end{thm}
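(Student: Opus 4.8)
The plan is to exploit the spectral structure of the generator $L = \beta^{-1}\Delta - \grad V\cdot\grad$ with Dirichlet conditions on $\partial W$. First I would recall (from Section~\ref{s:prelim}) that on $L^2(W,e^{-\beta V})$ this operator is self-adjoint with compact resolvent, hence has a discrete spectrum $-\lambda_1 > -\lambda_2 \geq \cdots$, with $\lambda_1$ simple and the principal eigenfunction $u_1>0$ in $W$; the QSD is $\nu = u_1 e^{-\beta V}\,/\!\int_W u_1 e^{-\beta V}$. The key identity is that both sides of the claimed inequality can be written through the (sub-Markovian) killed semigroup $P_t$, using the strong Markov property at time $t$:
\begin{equation*}
  \E^{\mu_t}\bracket{f(T,X_T)} = \frac{\E^{\mu_0}\bracket{\mathbf{1}_{T>t}\,\Phi(X_t)}}{\prob^{\mu_0}[T>t]},
  \qquad \Phi(x) := \E^{x}\bracket{f(T,X_T)},
\end{equation*}
so that $\E^{\mu_t}[f(T,X_T)] = (P_t\Phi)(\mu_0)/(P_t\mathbf{1})(\mu_0)$ where $(P_t g)(\mu_0) := \int_W \E^x[g]\,d\mu_0$. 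The same formula with $\mu_0$ replaced by $\nu$ gives $\E^\nu[f(T,X_T)] = \int_W \Phi\,d\nu$, since $\nu$ is stationary for the normalized killed dynamics.

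Next I would insert the spectral decomposition $P_t g = e^{-\lambda_1 t}\langle g,\phi_1\rangle \phi_1 + R_t g$, where $\phi_1 = u_1$ normalized in $L^2(W,e^{-\beta V})$ and $\norm{R_t g}\leq e^{-\lambda_2 t}\norm{g}$ on the orthogonal complement (with the analogous statement for general admissible $\mu_0$ requiring that $\mu_0$ put nonzero mass on the $\phi_1$-direction — this is exactly where "admissible" is used, and I would quote the definition/estimate from the earlier sections that gives $\langle \mathbf{1}_{\mu_0},\phi_1\rangle \geq c_0 > 0$ and $\norm{\mu_0}\leq C_0$ in the relevant dual norm). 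Writing $a := \langle \mathbf{1}_{\mu_0},\phi_1\rangle > 0$ and $b := \langle \Phi_{\mu_0},\phi_1\rangle$, one gets
\begin{equation*}
  \E^{\mu_t}\bracket{f(T,X_T)} = \frac{b + e^{(\lambda_1-\lambda_2)t}\,r_1(t)}{a + e^{(\lambda_1-\lambda_2)t}\,r_2(t)},
\end{equation*}
with $|r_1(t)|,|r_2(t)| \leq C\norm{f}_{L^\infty}$ (using $\norm{\Phi}_{L^\infty}\leq \norm{f}_{L^\infty}$ since $\Phi$ is an average of values of $f$, and $\norm{\mathbf{1}}_{L^2(W,e^{-\beta V})}<\infty$ because $W$ is bounded). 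Since $b/a = \int_W \Phi\,d\nu = \E^\nu[f(T,X_T)]$, subtracting and using that the denominator is bounded below by $a/2$ once $t\geq\underline t$ (choose $\underline t$ so that $e^{(\lambda_1-\lambda_2)\underline t}C\norm{f}_{L^\infty}$... — actually one must be careful, so instead bound $e^{(\lambda_1-\lambda_2)t}|r_2(t)|\leq a/2$ by taking $\underline t$ depending only on $a,\lambda_i,C$ after noting the estimate is trivial when $\norm{f}_{L^\infty}=0$ and otherwise normalizing $f$) yields
\begin{equation*}
  \abs{\E^{\mu_t}\bracket{f(T,X_T)} - \E^{\nu}\bracket{f(T,X_T)}} \leq \frac{2C}{a^2}\,\paren{|b|+a}\norm{f}_{L^\infty}\,e^{-(\lambda_2-\lambda_1)t},
\end{equation*}
and since $|b|\leq \norm{\Phi}_{L^\infty}\norm{\phi_1}_{L^1(W,e^{-\beta V})}\cdot(\text{const})\leq C'\norm{f}_{L^\infty}$... one should instead factor $\norm{f}_{L^\infty}$ out from the start so the final constant $C$ is independent of $f$, which it is. Set $\lambda_2 - \lambda_1 > 0$ as the spectral gap.

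The main obstacle I anticipate is not the spectral algebra but justifying uniformity: that the constant $C$ and the threshold $\underline t$ can be chosen independently of $f$ (handled by the homogeneity of the inequality in $f$, reducing to $\norm{f}_{L^\infty}=1$) and, more substantively, that a general \emph{admissible} $\mu_0$ — in particular $\delta_{x_0}$ — has a well-defined, strictly positive pairing with $\phi_1$ and a finite norm in whatever space makes $P_t$ act boundedly. For Dirac data this requires the smoothing of the killed semigroup: $P_t$ maps measures to functions instantly, so one applies the spectral bound to $P_{t_0}\mu_0 \in L^2(W,e^{-\beta V})$ for some small fixed $t_0$ and absorbs $t_0$ into $\underline t$ and $C$; the positivity $\langle P_{t_0}\mu_0,\phi_1\rangle = e^{-\lambda_1 t_0}\langle\mu_0,\phi_1\rangle>0$ follows from $\phi_1>0$ on $W$ together with the definition of admissibility. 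I expect the earlier sections to have packaged exactly these facts (positivity and regularity of $\phi_1$, the spectral gap, the meaning of "admissible"), so the proof is mostly a matter of assembling them into the quotient estimate above.
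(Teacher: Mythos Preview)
Your proposal is correct and follows essentially the same approach as the paper: express $\E^{\mu_t}[f(T,X_T)]$ as a quotient via the Markov property and Feynman--Kac, expand numerator and denominator in the eigenbasis of $L$, observe that the leading ($k=1$) terms reproduce $\E^\nu[f(T,X_T)]$ so that only $k\geq 2$ contributes to the error, and bound the remainder by $e^{-(\lambda_2-\lambda_1)t}$ once the denominator is bounded below for $t\geq\underline t$. The paper handles rough (e.g.\ Dirac) $\mu_0$ via explicit $H^{-s}_\mu$ norms and a Weyl-law summation bound (Proposition~\ref{p:sum_bound2}) rather than your ``apply $P_{t_0}$ first and absorb $t_0$ into $\underline t$'' smoothing, but these are equivalent packagings of the same parabolic regularization, and your identification of this as the only real obstacle is on target.
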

Taking $t$ sufficiently large so as to make this small corresponds to
the satisfactory completion of the decorrelation step; this reflects
\eqref{e:yaglom}.  We give a more precise statement of this theorem at
the beginning of Section \ref{s:decorr}, after introducing some
additional notation in Section \ref{s:prelim}.  This result also plays
a role in studying the dephasing step.
The constants $C$ and $\underline{t}$ depend on $\mu_0$, $V,$ and the
geometry of the well.  We will use the notation $C_\phase$ and
$C_\corr$, and $\underline{t}_\phase$ and $\underline{t}_\corr$ to
distinguish the constants induced by the dephasing and decorrelation
steps.

The next result ensures that the dephasing step terminates successfully:
\begin{thm}[Dephasing Process]
  \label{t:dephasing_soft}
  For an admissible distribution $\mu^0_{\phase}$ and $t_\phase \geq \underline{t}_\phase$:
  \begin{enumerate}
  \item Dephasing produces $N$ independent replicas with distributions
    $\mu_{\phase}$;
  \item Given any $\eps>0$, by taking $\tphase\geq
    \underline{t}_\phase$,
    \begin{equation*}
      \abs{\E^{\mu_\phase}\bracket{ f(T^k, X^k_{T^k})} -\E^{\nu}
        \bracket{f(T,
          X_{T})}  } \leq C_\phase e^{-(\lambda_2 - \lambda_1)\tphase}\norm{f}_{L^\infty}
    \end{equation*}
  \item The expected number of times a replica is relaunched is
    finite.
  \end{enumerate}
\end{thm}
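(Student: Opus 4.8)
The plan is to recognize the dephasing procedure as an acceptance--rejection sampling scheme, reduce the first and second assertions to Theorem~\ref{t:decorr_err_soft}, and obtain the third from a geometric-trials argument. To fix notation, write $p := \prob^{\mu^0_{\phase}}\bracket{T > \tphase}$ for the probability that a single freshly launched replica, with $X_0 \sim \mu^0_{\phase}$, remains in $W$ for all of $[0,\tphase]$. The successive attempts made on a given replica are driven by independent copies of \eqref{e:odlang} with fresh independent draws from $\mu^0_{\phase}$, hence are i.i.d.; the number of relaunches of that replica is $K-1$, where $K$ is the index of its first successful attempt. Let $\mu_{\tphase}$ denote the law of $X_{\tphase}$ conditioned on $\set{T > \tphase}$ with $X_0 \sim \mu^0_{\phase}$, as in \eqref{e:lawXt}. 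The identification $\mu_{\phase} = \mu_{\tphase}$ is what drives the whole argument.

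For the third assertion the crucial point is $p > 0$. This is standard for the nondegenerate diffusion \eqref{e:odlang} in a bounded domain $W$ with smooth boundary started from an admissible distribution; indeed, positivity of $\prob^{\mu_0}\bracket{T > t}$ for every finite $t$ is precisely what makes the conditioning in \eqref{e:lawXt} --- and the hypothesis of Theorem~\ref{t:decorr_err_soft} --- meaningful for $\mu_0 = \mu^0_{\phase}$. Given $p > 0$, $K$ is geometrically distributed with success probability $p$, so the dephasing scheme terminates almost surely and the expected number of relaunches of a single replica equals $(1-p)/p < \infty$; since the $N$ replicas are processed independently, the total expected number of relaunches is $N(1-p)/p < \infty$.

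For the first assertion, independence of the $N$ produced configurations is immediate: each replica uses its own driving noise and its own fresh draws from $\mu^0_{\phase}$. For the common law I would condition on $\set{K = k}$: on this event the accepted configuration is the time-$\tphase$ value of the $k$-th attempt conditioned on that attempt surviving past $\tphase$, namely $X_{\tphase}$ conditioned on $\set{T > \tphase}$ with $X_0 \sim \mu^0_{\phase}$. By the strong Markov property this conditional law equals $\mu_{\tphase}$ and does not depend on $k$ --- the $k-1$ failed attempts do not bias the accepted one --- so, summing over $k$, the accepted configuration has law $\mu_{\tphase} =: \mu_{\phase}$. The second assertion is then a corollary: apply Theorem~\ref{t:decorr_err_soft} with $\mu_0 = \mu^0_{\phase}$ and $t = \tphase \geq \underline{t}_{\phase}$, use $\mu_{\phase} = \mu_{\tphase}$ on the left-hand side, and relabel the resulting constants as $C_{\phase}$ and $\underline{t}_{\phase}$.

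The step I expect to be the main obstacle is the clean measure-theoretic justification of $\mu_{\phase} = \mu_{\tphase}$ together with the joint independence of the $N$ outputs: one has to be sure that the random number of attempts does not bias the accepted endpoint. Closely related, and underlying everything, is extracting from the admissibility of $\mu^0_{\phase}$ the bound $p = \prob^{\mu^0_{\phase}}\bracket{T > \tphase} > 0$. Once these are secured, all three assertions follow quickly, the second being essentially a restatement of Theorem~\ref{t:decorr_err_soft}.
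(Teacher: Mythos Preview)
Your proposal is correct and follows the same three-part structure as the paper: independence is immediate from the independent driving noises; the error bound on $\mu_{\phase}$ is Theorem~\ref{t:decorr_err_soft} (restated in Section~\ref{s:decorr} as Theorem~\ref{t:decorr_err}) applied with $\mu_0 = \mu^0_{\phase}$ and $t = \tphase$; and the finite expected number of relaunches is the geometric computation $(1-p)/p$.

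The one substantive difference is in establishing $p = \prob^{\mu^0_{\phase}}\bracket{T > \tphase} > 0$. You appeal to general properties of nondegenerate diffusions (equivalently, to the well-posedness of the conditioning in \eqref{e:lawXt}), whereas the paper argues by contradiction through the spectral expansion
\[
\prob^{\mu^0_{\phase}}\bracket{T \geq t} = \sum_{k\geq 1} e^{-\lambda_k t}\,\hat\mu^0_{\phase,k}\,\hat 1_k,
\]
showing that the leading term $e^{-\lambda_1 t}\int u_1\,d\mu^0_{\phase}\int u_1\,d\mu > 0$ eventually dominates (Lemma~\ref{l:tlaunch}); this is precisely where the $H^{-s}_\mu$ admissibility bound enters, to control the tail of the series. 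Your route is shorter and perfectly legitimate; the paper's has the side benefit of making the dependence of $p$ on $\int u_1\,d\mu^0_{\phase}$ explicit, which feeds into the practical remarks that follow. Conversely, your acceptance--rejection framing supplies a cleaner justification that the accepted endpoint has law exactly $\mu_{\tphase}$ regardless of the random number of attempts, a point the paper simply declares ``obvious.''
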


Next, the error in the parallel step cascading from the dephasing step
can be controlled:
\begin{thm}[Parallel Error]
  \label{t:par_err_soft}
  Given $\tphase\geq \underline{t}_{\phase}$, let
  \[
  \eps_\phase \equiv C_\phase e^{-(\lambda_2 - \lambda_1)\tphase},
  \]
  and assume the dephasing step has produced $N$ i.i.d. replicas drawn
  from distribution $\mu_\phase$.

  Then the exit time converges to an exponential law, with parameter
  $N\lambda_1$,
  \begin{equation*}
    \abs{\prob^{\mu_\phase}\bracket{T^\star > t} - e^{-N \lambda_1 t} }
    \leq N\eps_\phase(1+\eps_\phase)^{N-1} e^{-N\lambda_1 t} .
  \end{equation*}

  If we additionally assume that
  $N\eps_\phase(1+\eps_\phase)^{N-1}<1$, then the hitting point 
  distribution is asymptotically independent of the exit time
  \begin{equation*}
    \abs{\prob^{\mu_\phase}\bracket{X_{T^\star}^\star \in A  \mid{T^\star >t}}
      - \int_A d\rho} \lesssim\frac{ N^2\eps_\phase(1+\eps_\phase)^{N-1}}{1-N\eps_\phase(1+\eps_\phase)^{N-1}},
  \end{equation*}
  where $\rho$ is the hitting point density and $A\subset \partial W$.
\end{thm}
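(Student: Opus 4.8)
The plan is to handle the two assertions separately, in each case reducing to the single-replica estimates of Theorems~\ref{t:decorr_err_soft}--\ref{t:dephasing_soft} by exploiting the independence of the $N$ replicas. For the exit-time law I would start from the multiplicative form of Theorem~\ref{t:decorr_err_soft} (the precise statement promised in Section~\ref{s:decorr}): for an admissible initial law the conditioned survival probability has the shape $\prob^{\mu_\phase}\bracket{T^k > t} = e^{-\lambda_1 t}\paren{1 + r(t)}$ with $\sup_{t\ge 0}\abs{r(t)}\le \eps_\phase$, which comes out of the spectral decomposition of the sub-Markovian semigroup recalled in Section~\ref{s:prelim}, the content being an exponentially small \emph{relative} error against $e^{-\lambda_1 t}$. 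Since $T^1,\dots,T^N$ are i.i.d., $\prob^{\mu_\phase}\bracket{T^\star > t} = \prod_{k=1}^N \prob^{\mu_\phase}\bracket{T^k>t} = e^{-N\lambda_1 t}\paren{1+r(t)}^N$, and the elementary inequality $\abs{(1+r)^N-1}\le N\abs{r}(1+\abs{r})^{N-1}$ (integrate $\tfrac{d}{du}(1+u)^N$ from $0$ to $r$) yields the first bound with the stated constant.

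For the hitting-point distribution the structural step is a restart argument. Conditioning on $\set{T^\star > t}=\bigcap_k\set{T^k>t}$ and applying the Markov property at time $t$ to the $N$ independent walkers at once, the configuration $(X^1_t,\dots,X^N_t)$ becomes i.i.d., each component distributed as a single $\mu_\phase$-walker at time $t$ conditioned on $\set{T>t}$, and from time $t$ the walkers continue as fresh independent copies; hence $\prob^{\mu_\phase}\bracket{X^\star_{T^\star}\in A\mid T^\star>t}$ equals the unconditional first-exiter hitting distribution for $N$ i.i.d.\ walkers launched from that conditioned law. By Theorem~\ref{t:decorr_err_soft} this conditioned law obeys the same QSD-closeness estimates as $\mu_\phase$ (with the same $\eps_\phase$; conditioning on further survival does not degrade the approximation). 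It therefore suffices to treat $t=0$: given $N$ i.i.d.\ walkers drawn from a law $\mu$ satisfying $\abs{\E^\mu\bracket{f(T,X_T)}-\E^\nu\bracket{f(T,X_T)}}\le\eps_\phase\norm{f}_{L^\infty}$ and $\prob^\mu\bracket{T>s}=e^{-\lambda_1 s}(1+r(s))$ with $\norm{r}_\infty\le\eps_\phase$, estimate $\abs{\prob^\mu\bracket{X^\star_{T^\star}\in A}-\int_A d\rho}$.

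For that last step, exchangeability together with conditioning on the minimizing index gives $\prob^\mu\bracket{X^\star_{T^\star}\in A}=N\,\E^\mu\bracket{\mathbf{1}_{\{X_T\in A\}}\,G(T)^{N-1}}$, where $G(s)=\prob^\mu\bracket{T>s}$ and $(T,X_T)$ denotes the exit data of a single $\mu$-walker; ties among the $T^k$ occur with probability zero because the exit-time law has a density. Applying Theorem~\ref{t:decorr_err_soft} with $f(s,\xi)=\mathbf{1}_{\{\xi\in A\}}G(s)^{N-1}$, for which $\norm{f}_{L^\infty}\le 1$ since $0\le G\le 1$, and using that under $\nu$ the exit time is exponential with parameter $\lambda_1$ and independent of $X_T$, whose law is $\rho$, gives $\prob^\mu\bracket{X^\star_{T^\star}\in A}=N\rho(A)\int_0^\infty\lambda_1 e^{-\lambda_1 s}G(s)^{N-1}\,ds+O(N\eps_\phase)$. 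Substituting $G(s)=e^{-\lambda_1 s}(1+r(s))$ rewrites the integral as $\tfrac1N\int_0^\infty N\lambda_1 e^{-N\lambda_1 s}(1+r(s))^{N-1}\,ds=\tfrac1N\bracket{1+O\paren{N\eps_\phase(1+\eps_\phase)^{N-1}}}$, so collecting terms already gives an estimate of the stated type (in fact somewhat cleaner); the precise form with denominator $1-N\eps_\phase(1+\eps_\phase)^{N-1}$ emerges once the exit-time probabilities in the conditional law are bounded above and below separately, via the theorem's first part, instead of being cancelled.

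The step I expect to be the main obstacle is making the restart argument airtight: one must verify that conditioning the parallel step on $\set{T^\star>t}$ really does produce $N$ i.i.d.\ walkers whose common law still satisfies the hypotheses of Theorem~\ref{t:decorr_err_soft}. This is delicate precisely because $\mu_\phase$ is produced by the dephasing procedure, with its relaunchings, rather than being a clean conditioned law, so one has to control how Theorem~\ref{t:dephasing_soft} behaves under an additional conditioning on survival --- which is exactly the mechanism that ties Theorem~\ref{t:par_err_soft} to the two preceding theorems. A secondary, routine matter is keeping the $(1+\eps_\phase)^{N-1}$-type factors uniform so that the $N$-dependence of all error constants stays polynomial, as in the statement.
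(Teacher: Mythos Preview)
Your proposal is correct and follows essentially the same route as the paper: factor the survival probability by independence and use $\abs{(1+r)^N-1}\le N\abs{r}(1+\abs{r})^{N-1}$ together with the single-replica relative-error bound (the paper's Theorem~\ref{t:single_exit_est}); for the hitting point, restart at time $t$, expand by exchangeability as $N\,\E\bracket{\phi(X_T)G(T)^{N-1}}$, and compare to $\nu$ term by term. The paper packages your restart as Lemma~\ref{e:advancetime} and carries out the term-by-term comparison in \eqref{e:summand_diff}; the denominator $1-N\eps_\phase(1+\eps_\phase)^{N-1}$ arises exactly as you anticipate, from lower-bounding $\prob^{\mu_\phase}\bracket{T^\star>t}$ via the first part.

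Your stated ``main obstacle'' is not one. By Theorem~\ref{t:dephasing} part~A, $\mu_\phase$ \emph{is} the clean conditioned law $\prob^{\mu_\phase^0}\bracket{X_{\tphase}\in\cdot\mid T>\tphase}$; the relaunchings are only a sampling device and leave no trace in the distribution. Further conditioning on survival to time $t$ therefore simply advances along the same one-parameter family to $\mu_{\tphase+t}$, which is admissible and satisfies Theorem~\ref{t:decorr_err} with the same (indeed smaller) constant. No interaction with the dephasing mechanism needs to be controlled.
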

Thus, for $\tphase$ large enough, we achieve
the ideal factor of $N$ speedup and we do not disrupt the hitting
point distribution too much.  The reader may find the $N$ dependence
in the error terms to be disconcerting, but it can easily be
controlled by taking $\tphase \gtrsim \log N/(\lambda_2 - \lambda_1)$.
We will return to this in the discussion.  We also note that there is
a slight abuse of notation in the above expressions.  The
superscripts, $\nu$ and $\mu_{\phase}$, should be interpreted as
$N$-tensor products, with a distinct realization drawn for each
replica.

A more detailed statement of this theorem, with explicit constants, is
given at the beginning of Section \ref{s:par}.  The hitting point
density $\rho$ is defined by \eqref{e:bdry_meas}.

However, Theorem \ref{t:par_err_soft} is only a comparison between
the parallel step and the QSD.  Our final result is a comparison
between the ParRep algorithm, including decorrelation, dephasing and
parallel steps, with an unaccelerated, serial process:
\begin{thm}[ParRep Error]
  \label{t:unified_soft}

  Let $X^{\rm s}_t$ denote the unaccelerated (serial) process and
  $X^{\rm p}_t$ denote the ParRep process, and let both the serial process
  and the reference process be initially distributed under $\mu_0$,
  an admissible distribution.  Furthermore, assume the replicas are
  initialized from $\mu_\phase^0$, also an admissible distribution.

  Given $\tcorr \geq \underline{t}_\corr$ and $\tphase \geq
  \underline{t}_\phase$, let
  \begin{align*}
    \eps_\corr & = C_\corr e^{-(\lambda_2 - \lambda_1)\tcorr},\\
    \eps_\phase & = C_\phase e^{-(\lambda_2 - \lambda_1)\tphase}.
  \end{align*}

  Letting $T^{\rm s}$ and $T^{\rm p}$ denote the physical exit times,
  we have
  \begin{equation*}
    \begin{split}
      &\abs{\prob^{\mu_0}\bracket{{ T^{\rm s}>t}}-
        \prob^{\mu_0}\bracket{{T^{\rm p}>t}}}\\
      &\quad \lesssim\bracket{\eps_\corr +
        N\eps_\phase(1+\eps_\phase)^{N-1}}e^{-\lambda_1(t-\tcorr)_+}.
    \end{split}
  \end{equation*}
  If, in addition, $\tcorr$ is sufficiently large that $\eps_\corr
  <1$, then for $A\subset \partial W$,
  \begin{equation*}
    \begin{split}
      &\abs{\prob^{\mu_0}\bracket{X_{T^{\rm s}}^{\rm s} \in A \mid {
            T^{\rm s}>t}}-
        \prob^{\mu_0}\bracket{X_{T^{\rm p}}^{\rm p}\in A\mid {T^{\rm p}>t}}}\\
      &\quad \lesssim \frac{\eps_\corr +
        N^2\eps_\phase(1+\eps_\phase)^{N-1}}{1-\eps_\corr}
    \end{split}
  \end{equation*}
\end{thm}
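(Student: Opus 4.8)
The plan is to compare the ParRep process with the serial process by interposing the quasistationary distribution as a bridge, using the earlier theorems to control each leg. For the exit time estimate, I would first decompose the serial survival function according to whether the reference process exits the well before time $\tcorr$ (the decorrelation window) or not. If it exits during decorrelation, the ParRep process and the serial process are coupled exactly — they are literally the same trajectory — so there is no error from that event. Conditioned on survival past $\tcorr$, the serial process at time $\tcorr$ has law $\mu_{\tcorr}$, which by Theorem~\ref{t:decorr_err_soft} is within $\eps_\corr$ of $\nu$ in the relevant functional sense; meanwhile the ParRep process, having reached the parallel step, has exit time governed by $N$ dephased replicas. I would then write
\begin{equation*}
  \prob^{\mu_0}\bracket{T^{\rm s}>t} = \prob^{\mu_0}\bracket{T^{\rm s}>\tcorr}\,\prob^{\mu_{\tcorr}}\bracket{T>t-\tcorr}
\end{equation*}
for $t\geq\tcorr$ by the Markov property, and similarly decompose $\prob^{\mu_0}\bracket{T^{\rm p}>t}$, noting that the ParRep process survives to the parallel step exactly when the reference survives decorrelation, so the leading factors $\prob^{\mu_0}\bracket{T^{\rm s}>\tcorr}$ match. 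The residual is then a product of two differences — one between $\mu_{\tcorr}$-survival and $\nu$-survival (bounded by $\eps_\corr e^{-\lambda_1(t-\tcorr)}$ via Theorem~\ref{t:decorr_err_soft} applied with $f$ an indicator of $\set{\tau>t-\tcorr}$, after accounting for the time shift), and one between the $N$-replica parallel survival and $e^{-N\lambda_1(t-\tcorr)}$ (bounded by $N\eps_\phase(1+\eps_\phase)^{N-1}e^{-N\lambda_1(t-\tcorr)}$ via Theorem~\ref{t:par_err_soft}). A triangle inequality through the common intermediate quantity $\prob^{\mu_0}\bracket{T^{\rm s}>\tcorr}e^{-N\lambda_1(t-\tcorr)}$ — wait, that is not right dimensionally, since the serial rate is $\lambda_1$ and the parallel rate is $N\lambda_1$; so the correct intermediate is $\prob^{\mu_0}\bracket{T^{\rm s}>\tcorr}e^{-\lambda_1(t-\tcorr)}$ compared against the serial side and $\prob^{\mu_0}\bracket{T^{\rm s}>\tcorr}$ times the parallel QSD survival compared against the ParRep side, with the physical-time rescaling $NT^\star$ converting the $N\lambda_1$ rate back to $\lambda_1$. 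Assembling these and using $(t-\tcorr)_+$ to cover $t<\tcorr$ (where both sides equal one minus the same decorrelation-exit probability) gives the claimed bound.

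For the hitting-point estimate, I would condition similarly on the event that exit occurs after decorrelation (the complementary event again couples the two processes exactly, contributing nothing). On that event, the serial hitting-point law conditioned on $T^{\rm s}>t$ is governed by $\mu_{\tcorr}$ restarted, and the ParRep hitting-point law by the dephased replicas. I would apply Theorem~\ref{t:decorr_err_soft} to replace $\mu_{\tcorr}$ by $\nu$ (cost $\eps_\corr$, with the $1/(1-\eps_\corr)$ denominator coming from renormalizing the conditional probability — the survival denominator is within $\eps_\corr$ of the QSD survival, so dividing incurs that factor) and Theorem~\ref{t:par_err_soft} to replace the parallel hitting-point law by the QSD hitting-point density $\rho$ (cost $N^2\eps_\phase(1+\eps_\phase)^{N-1}/(1-N\eps_\phase(1+\eps_\phase)^{N-1})$). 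Since both are now being compared against the \emph{same} reference — integration of $\rho$ over $A$ — a triangle inequality closes the estimate; the hypothesis $\eps_\corr<1$ is exactly what is needed for the renormalization denominator, and I would absorb the parallel-side denominator into the $\lesssim$.

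The main obstacle I anticipate is bookkeeping the time shift and the rescaling of physical versus computer time correctly. In the parallel step the physical clock advances by $\tcorr + NT^\star$, so the event $\set{T^{\rm p}>t}$ translates to $\set{\tcorr + NT^\star > t}$, i.e. $\set{T^\star > (t-\tcorr)/N}$, and feeding this into Theorem~\ref{t:par_err_soft} with argument $(t-\tcorr)/N$ produces $e^{-N\lambda_1(t-\tcorr)/N} = e^{-\lambda_1(t-\tcorr)}$, which is how the parallel rate collapses back to the serial rate $\lambda_1$ — reassuring, but easy to get wrong by a factor of $N$ in the exponent. A secondary subtlety is that Theorem~\ref{t:decorr_err_soft} is stated for functions $f(\tau,\xi)$ of the exit time and exit point jointly, whereas here I need it for the survival probability past a shifted time, which is $f(\tau) = \mathbf{1}\set{\tau > t-\tcorr}$ evaluated after a further evolution of the well-conditioned process; applying the theorem cleanly requires invoking the Markov property to express the shifted survival as an expectation under $\mu_{\tcorr}$ and then comparing with $\nu$, using that $\nu$ is genuinely stationary for the killed semigroup so that its shifted survival is exactly $e^{-\lambda_1 s}$ times its survival — this is where the clean exponential on the right-hand side comes from. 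Everything else is triangle inequalities and absorbing constants into $\lesssim$.
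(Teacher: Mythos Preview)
Your proposal is correct and matches the paper's approach closely: condition on whether exit occurs before or after $\tcorr$, use exact coupling on the early-exit event, and on the late-exit event insert the QSD as a common intermediate via the triangle inequality, with the physical-time rescaling $T^{\rm p}=\tcorr+NT^\star$ collapsing the parallel rate $N\lambda_1$ back to $\lambda_1$. The one subtlety you correctly flag---that comparing $\prob^{\mu_{\tcorr}}[T>s]$ to $e^{-\lambda_1 s}$ requires a \emph{multiplicative} error $\eps_\corr e^{-\lambda_1 s}$, not the additive $\eps_\corr$ that Theorem~\ref{t:decorr_err_soft} with $f=1_{\tau>s}$ would give---is handled in the paper by a dedicated refinement (Theorem~\ref{t:single_exit_est}, proved by expanding both survival probabilities in the eigenbasis and cancelling the $k=1$ term), which is exactly the Markov-plus-stationarity argument you describe in your last paragraph.
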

Thus, over a single cycle, the error in ParRep can be approximately
decomposed as
\begin{equation}
  \text{Error} = \text{Decorrelation error} + \text{Parallel error}(\text{Dephasing error}),
\end{equation}
where we view the parellel error as a function of the dephasing error.
The speedup can be seen when $T^\para$ is given further consideration.
When $T^\para > \tcorr$, $T^\para = NT^\star+ \tcorr$ where $T^\star$
is the exit time of the particular replica which escapes first.  There
will be no speedup if the exit is before $\tcorr$.

\subsection{Outline of the Paper}
In section \ref{s:prelim}, we review some important results for
\eqref{e:odlang}.  Our main Theorems are proven in Sections
\ref{s:decorr}, \ref{s:dephasing}, and \ref{s:par}.  We then discuss
our results in Section \ref{s:disc}.  Some additional calculations appear
in the appendix.

\subsection{Notation}
\label{s:notation}
Random variables, such as the position, $X_t$, and the exit time from
the well, $T$, will appear in capital letters.  Deterministic values,
such as $x$, $t$, $\tcorr$, {\it etc.} will be lower case.  We will
frequently use indicator functions in our analysis, which we write as
$1_A$, with $A$ indicating the set on which the value is one.

We are often interested in probabilities and expectations of solutions
of $X_t$ solving \eqref{e:odlang}, and its exit time $T$ from some
region $W$.  When we write
\begin{equation*}
  \E^x \bracket{f(T, X_{T})} \text{ or }
  \prob^x \bracket{T \geq t}=\E^x \bracket{1_{T\geq t}}
\end{equation*}
the superscript $x$ indicates that $x$ is the initial condition of
$X_t$; $X_0 = x$, and the expectation and probability are then taken
with respect to the underlying Wiener measure of $B_t$.

When $X_0$ is given by some distribution $\mu_0$ over $W$, we write
\begin{equation*}
  \E^{\mu_0} \bracket{f(T, X_{T})} \equiv \int_W \E^x \bracket{f(T, X_{T})} d\mu_0(x).
\end{equation*}
When we write a conditional expectation with respect to distribution
$\mu_0$, we mean
\begin{equation*}
  \E^{\mu_0} \bracket{f(T, X_{T})\mid T > t}
   \equiv \frac{\E^{\mu_0} \bracket{f(T, X_{T})1_{ T > t}} }{\prob^{\mu_0} \bracket{ T > t} }.
\end{equation*}

For the reader more accustomed to the computational physics literature,
\begin{equation*}
  \E^{\mu_0}\bracket{\mathcal{O}(X_t)} = \left\langle \mathcal{O}(t) \right \rangle.
\end{equation*}
It is helpful to explicitly include the starting distribution, $\mu_0$
associated with the process $X_t$, to avoid any ambiguity.

When we write $f \lesssim g$, we mean that there exists a constant $C>
0$ such that $f \leq C g$, but that the constant is not noteworthy.

\subsection{Acknowledgements}
The authors wish to thank D.~Aristoff, K.~Leder, S.~Mayboroda,
A. Shapeev, and O.~Zeitouni for helpful conversations in developing
these ideas.

We also thank D. Perez and A.F. Voter for conversations at LANL that
motivated important refinements of our estimates.

This work was supported by the NSF PIRE grant OISE-0967140 and the DOE
grant DE-SC0002085.

\section{Preliminary Results}
\label{s:prelim}

Before proceeding to our main results on ParRep, we review some
important results on the overdamped Langevin equation.  These results
are where our regularity assumptions on $V$, $W,$ and $\partial W$ are
needed.

Two essential tools in our study of \eqref{e:odlang} are the
Feynman-Kac formula and the quasistationary distribution, which we
briefly review here; see \cite{Bris:2011p13365} for additional
details.  First, let us recall the Feynman-Kac formula which relates solutions
of a parabolic equation with corresponding elliptic operator
\begin{equation}
  \label{e:linop}
  L \equiv - \nabla V \cdot \nabla + \beta^{-1} \Delta
\end{equation}
to solutions of \eqref{e:odlang}.

\begin{prop}[Proposition 1 of \cite{Bris:2011p13365}]
  On the parabolic domain $W \times \R^+$, let $v$ solve
  \begin{subequations}
    \label{e:fk_parabolic}
    \begin{align}
      \partial_t v &= L v,\\
      v\mid_{\partial W} & = \phi: \partial W \to \R, \\
      v(t=0) & = v_0: W \to \R.
    \end{align}
  \end{subequations}
  Then,
  \begin{equation}
    \label{e:fk}
    v(t,x) = \E^x\bracket{1_{{T}\leq t } \phi(X_{T})   } + \E^x
    \bracket{1_{T > t} v_0(X_t)  }.
  \end{equation}
\end{prop}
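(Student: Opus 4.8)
The plan is to prove \eqref{e:fk} by applying It\^o's formula to the time-reversed solution along trajectories of \eqref{e:odlang}, stopped at the exit from $W$. Fix $t>0$ and $x\in W$ and set $u(s,y)\equiv v(t-s,y)$ for $0\le s\le t$. Since $v$ solves $\partial_t v = Lv$, the function $u$ satisfies the backward equation $\partial_s u + Lu = 0$ on $W\times[0,t]$, with $u(t,\cdot)=v_0$ on $W$ and $u(s,\cdot)\mid_{\partial W}=\phi$ for every $s$. The point is that, by the definition \eqref{e:linop} of $L$, It\^o's formula applied to $s\mapsto u(s,X_s)$ kills the finite-variation part exactly when $u$ solves this backward equation.

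First I would record the regularity needed. Under the standing assumptions ($W$ bounded, $\partial W$ smooth, $V$ smooth on $\overline{W}$) the initial-boundary value problem has a solution $v$ that is $C^{1,2}$ on $W\times(0,\infty)$, continuous up to $\partial W\times(0,\infty)$ with $v\to\phi$ there (every boundary point is regular because $\partial W$ satisfies an exterior ball condition), and bounded on $\overline{W}\times[0,t]$ by $\max\{\norm{\phi}_{L^\infty},\norm{v_0}_{L^\infty}\}$ via the maximum principle; moreover $T<\infty$ $\prob^x$-a.s.\ and $X_T\in\partial W$ a.s.\ since the diffusion is nondegenerate in a bounded domain. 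It\^o's formula on $s\in[0,t\wedge T]$ then gives
\[
u(s\wedge T, X_{s\wedge T}) = u(0,x) + \int_0^{s\wedge T}(\partial_s u + Lu)(r,X_r)\,dr + \sqrt{2\beta^{-1}}\int_0^{s\wedge T}\nabla u(r,X_r)\cdot dB_r,
\]
and the drift integral vanishes identically. Hence $s\mapsto u(s\wedge T, X_{s\wedge T})$ is a local martingale, and since it is uniformly bounded by $\max\{\norm{\phi}_{L^\infty},\norm{v_0}_{L^\infty}\}$ (the stopped process stays in $\overline{W}$) it is a genuine martingale after localizing by stopping times exhausting $[0,t\wedge T)$ and passing to the limit by dominated convergence. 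Evaluating the martingale property at $s=0$ and $s=t$,
\[
v(t,x) = u(0,x) = \E^x\bracket{u(t\wedge T, X_{t\wedge T})}.
\]

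Next I would identify the terminal random variable $u(t\wedge T, X_{t\wedge T})$. On $\{T\le t\}$ we have $t\wedge T = T$ and $X_T\in\partial W$, so $u(t\wedge T, X_{t\wedge T}) = v(t-T, X_T) = \phi(X_T)$ by the boundary condition (using that $v$ attains $\phi$ continuously at the regular point $X_T$ when $t-T>0$, and a null-set argument at $T=t$). On $\{T>t\}$ we have $t\wedge T = t$, $X_t\in W$, and $u(t,X_t) = v(0,X_t) = v_0(X_t)$. Therefore $u(t\wedge T, X_{t\wedge T}) = 1_{T\le t}\,\phi(X_T) + 1_{T>t}\,v_0(X_t)$, and substituting into the previous display yields \eqref{e:fk}.

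The main obstacle is boundary and corner regularity: if $\phi$ and $v_0$ are merely bounded and measurable (and incompatible at $\partial W\times\{0\}$), then $v$ need not be $C^{1,2}$ up to the parabolic corner and $\nabla u$ may blow up there, so one cannot appeal to a bounded integrand in the stochastic integral. This is precisely what the localization argument above is designed to sidestep — one never needs pointwise control of $\nabla u$, only that $v$ is bounded on $\overline{W}\times[0,t]$ and that the process reaches the corner only on a $\prob^x$-null set. One must also be slightly careful that $v(0,\cdot)$ is recovered in the a.e.\ sense along $X_t$; this is automatic once $v_0$ is taken continuous on $W$, or more generally by approximation. Alternatively, the statement is exactly Proposition 1 of \cite{Bris:2011p13365}, established there under the same hypotheses, and one may simply invoke it.
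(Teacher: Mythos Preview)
Your argument is the standard Feynman--Kac derivation via It\^o's formula applied to $u(s,y)=v(t-s,y)$ and it is correct under the paper's regularity hypotheses; the localization you describe is exactly what one needs to avoid assuming gradient bounds up to the parabolic boundary. There is nothing to compare against, however: the paper does not prove this proposition at all but simply quotes it as Proposition~1 of \cite{Bris:2011p13365}, so your write-up supplies more than the paper does.
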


To say a bit more about the elliptic operator $L$, recall the
invariant measure of \eqref{e:odlang}:
\begin{equation}
  \label{e:invariant_meas}
  d\mu \equiv  Z^{-1} \exp\paren{- \beta V(x)}dx,
\end{equation}
where $Z$ is the appropriate normalization.  We introduce the Hilbert
space $L^2_\mu$, with inner product
\begin{equation}
  \label{e:mu_innerproduct}
  \inner{f}{g}_\mu \equiv \int f g d\mu.
\end{equation}
An elementary calculation shows that $L$ is self adjoint and negative
definite with respect to this inner product when supplemented with
homogeneous Dirichlet boundary conditions on $\partial W$.  Standard functional
analysis and elliptic theory tell us that $L$ has infinitely many
eigenvalue/eigenfunction pairs $(\lambda_k, u_k)$; the eigenvalues can be
ordered
\[
0 > - \lambda_1 > - \lambda_2 \geq -\lambda_3 \geq \ldots;
\]
and the eigenfunctions form a complete orthonormal basis for
$L^2_\mu(W)$.  In addition, the ground state, $u_1$, is unique and
positive.  For details, see, for example,
\cite{evans2002pde,gilbarg2001elliptic,haroske2008distributions}.  The
$\lambda_1$ and $\lambda_2$ appearing in our theorems are precisely
the first two eigenvalues.

When solving \eqref{e:fk_parabolic} with $\phi = 0$, the solution can
be expressed as
\begin{equation}
  \label{e:series_soln}
  v(x,t) = \sum_{k=1}^\infty e^{-\lambda_k t} \inner{v_0}{u_k}_\mu u_k.
\end{equation}
Out of this spectral problem, we build the norm
\begin{equation}
  \label{e:Hmu}
  \norm{f}_{H_\mu^{s}}^2 \equiv \sum_{k=1}^\infty \lambda_k^{s} \abs{\inner{f}{u_k}_\mu}^2.
\end{equation}
This generalizes to measures
\begin{equation}
  \label{e:Hmu0}
  \norm{\mu_0}_{H_\mu^{s}}^2 \equiv \sum_{k=1}^\infty \lambda_k^{s}
  \abs{\int u_k d\mu_0}^2,
\end{equation}
and to sequences, $\mathbf{a} = (a_1, a_2,\ldots)$
\begin{equation}
  \label{e:Hmuseq}
  \norm{\mathbf{a}}_{H_\mu^{s}}^2 \equiv \sum_{k=1}^\infty \lambda_k^{s}
  \abs{a_k}^2.
\end{equation}
If $\mu_0$ has an Radon-Nikodym derivative with respect to $\mu$,
\eqref{e:Hmu} and \eqref{e:Hmu0} agree.  We then define the function
spaces,
\begin{equation}
  H^{s}_\mu = \set{v \in \mathscr{S}(W)'\mid \norm{v}_{H^{s}_\mu}<\infty },
\end{equation}
where $\mathscr{S}$ is the set of smooth functions with support in
$W$, and $\mathscr{S}'$ is its dual.  We also define the projection
operator, $P_{\mathcal{I}}$, where $\mathcal{I} \subset \mathbb{N}$,
\begin{equation}
  \label{e:proj}
  P_{\mathcal{I}} f = \sum_{k\in \mathcal{I}} \inner{f}{u_k}_{\mu} u_k.
\end{equation}

Having introduced these spaces and norms, we can now clarify what was
meant by the term {\it admissible distribution} used in
the introduction.  In this work, a distribution will
be admissible with respect to $W$ if $\supp \mu_0 \subset W$, and for
some $s\geq 0$, $\norm{\mu_0}_{H^{-s}_\mu}<\infty$.

The aforementioned quasistationary distribution of \eqref{e:odlang}
associated with the set $W$ is closely related to the spectral
structure of $L$.  The QSD, $\nu$, is a time independent probability
measure satisfying, for all measurable $A\subset W$ and $t>0$:
\begin{equation}
  \nu(A) = \frac{\int_W \prob^x \bracket{X_t \in A,\; t < T} d\nu
  }{\int_W \prob^x \bracket{ t < T} d\nu} = \prob^\nu \bracket{X_t \in
    A\mid t < T}.
\end{equation}
The QSD measure $\nu$ exists and
\begin{prop}[Proposition 2 of \cite{Bris:2011p13365}]
  \begin{equation}
    \label{e:qsd_u1}
    d\nu = \frac{u_1 d\mu}{\int_W u_1 d\mu} = \frac{u_1 e^{-\beta V}dx.
    }{\int_W u_1 e^{-\beta V} dx }
  \end{equation}
\end{prop}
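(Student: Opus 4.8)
The plan is to verify directly that the measure defined by \eqref{e:qsd_u1} satisfies the QSD fixed-point identity, and then to establish uniqueness by exploiting that $u_1$ is the unique sign-definite eigenfunction. The central object is the Dirichlet-killed sub-Markovian semigroup
\begin{equation*}
  (P_t f)(x) = \E^x\bracket{1_{t < T} f(X_t)},
\end{equation*}
which is precisely the $\phi = 0$ instance of the Feynman-Kac formula \eqref{e:fk}, and whose spectral representation is exactly \eqref{e:series_soln}, namely $P_t f = \sum_{k} e^{-\lambda_k t} \inner{f}{u_k}_\mu u_k$. The defining property of the QSD can then be rewritten as the statement that the normalized push-forward of the initial measure under $P_t$ returns the same measure.

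First I would verify that $\nu$ given by \eqref{e:qsd_u1} is a QSD. Writing $d\nu = c\, u_1\, d\mu$ with $c = \paren{\int_W u_1\, d\mu}^{-1}$, and using that $L$, hence $P_t$, is self-adjoint with respect to $\inner{\cdot}{\cdot}_\mu$, I would compute for any measurable $A \subset W$
\begin{equation*}
  \prob^\nu\bracket{X_t \in A,\; t < T} = \int_W (P_t 1_A)\, d\nu = c\,\inner{P_t 1_A}{u_1}_\mu = c\,\inner{1_A}{P_t u_1}_\mu.
\end{equation*}
Since $P_t u_1 = e^{-\lambda_1 t} u_1$, the right-hand side collapses to $e^{-\lambda_1 t}\, c\,\inner{1_A}{u_1}_\mu = e^{-\lambda_1 t}\, \nu(A)$. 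Setting $A = W$ gives $\prob^\nu\bracket{t < T} = e^{-\lambda_1 t}$, so dividing yields $\prob^\nu\bracket{X_t \in A \mid t < T} = \nu(A)$ for every $t > 0$, which is the QSD identity. The second equality in \eqref{e:qsd_u1} is then immediate on substituting $d\mu = Z^{-1} e^{-\beta V} dx$ from \eqref{e:invariant_meas}, since $Z^{-1}$ cancels between numerator and denominator.

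For uniqueness, I would show any QSD $\tilde\nu$ coincides with $\nu$. The invariance $\prob^{\tilde\nu}\bracket{X_t \in A \mid t < T} = \tilde\nu(A)$ is equivalent to $\tilde\nu P_t = \theta(t)\,\tilde\nu$ with $\theta(t) = \prob^{\tilde\nu}\bracket{t < T}$. Assuming momentarily that $\tilde\nu$ has an $L^2_\mu$ density $g$, the identity $P_t g = \theta(t) g$ together with the expansion $P_t g = \sum_k e^{-\lambda_k t} \inner{g}{u_k}_\mu u_k$ forces $e^{-\lambda_k t}\inner{g}{u_k}_\mu = \theta(t)\inner{g}{u_k}_\mu$ for all $t$; hence at most one eigenvalue $\lambda_k$ can carry a nonzero coefficient. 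Because $g \geq 0$ is $\mu$-a.e. sign-definite and $u_1$ is the unique eigenfunction of constant sign (with $\lambda_1$ simple), this singles out $g \propto u_1$, so $\tilde\nu = \nu$.

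The main obstacle is precisely the passage from measures to $L^2_\mu$ assumed above: an arbitrary QSD need not be absolutely continuous with respect to $\mu$, let alone have a square-integrable density, so the spectral expansion is not directly available. The clean remedy is to smooth first. Running the Dirichlet dynamics for a short time $\epsilon > 0$ makes $\tilde\nu P_\epsilon$ absolutely continuous with density $g_\epsilon(y) = \int p^D_\epsilon(x,y)\, d\tilde\nu(x)$, where $p^D_\epsilon(x,y) = \sum_k e^{-\lambda_k \epsilon} u_k(x) u_k(y)$ is the $\mu$-symmetric Dirichlet kernel; smoothness and boundedness of the kernel on $\overline{W}$ (parabolic regularity) put $g_\epsilon \in L^2_\mu$. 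The eigenmeasure relation gives $P_t g_\epsilon = \theta(t) g_\epsilon$, so the expansion argument above applies to $g_\epsilon$ and yields $\tilde\nu P_\epsilon \propto u_1\, d\mu \propto \nu$ after normalization; but the QSD property at time $\epsilon$ also gives normalized $\tilde\nu P_\epsilon = \tilde\nu$, whence $\tilde\nu = \nu$. The sole analytic input needed is the positivity statement that $u_1$ is the unique sign-definite Dirichlet eigenfunction on the bounded smooth domain $W$, which is standard elliptic/Perron-Frobenius theory and may be quoted from the references cited above.
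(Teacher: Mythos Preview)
The paper does not supply a proof of this proposition: it is quoted verbatim as Proposition~2 of \cite{Bris:2011p13365}, and the surrounding text simply refers the reader to the QSD literature for details. There is therefore no in-paper argument to compare yours against.

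Your argument is correct. The verification that $d\nu = c\,u_1\,d\mu$ satisfies the QSD identity is the standard one and uses exactly the tools the paper has set up (self-adjointness of $P_t$ in $L^2_\mu$ and $P_t u_1 = e^{-\lambda_1 t}u_1$). For uniqueness, the smoothing step is the right way to handle a QSD $\tilde\nu$ without an a~priori $L^2_\mu$ density: from $\tilde\nu P_s = \theta(s)\tilde\nu$ one gets $\theta(t+\epsilon)=\theta(t)\theta(\epsilon)$, hence $(\tilde\nu P_\epsilon)P_t = \theta(t)(\tilde\nu P_\epsilon)$, so the eigenmeasure relation persists for the smoothed measure, which now has a bounded density by parabolic regularity of the Dirichlet kernel. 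One small point you could make explicit in the final step: once $g_\epsilon$ lies in a single eigenspace, the reason it must be the $\lambda_1$-eigenspace is that any nonnegative $L^2_\mu$ function orthogonal to the strictly positive $u_1$ vanishes $\mu$-a.e., which is incompatible with $g_\epsilon$ being a probability density. This closes the argument without needing the full Perron--Frobenius statement.
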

We refer the reader to, amongst others,
\cite{Cattiaux:2009um,Cattiaux:2010fx,Collet:1995ts,Martinez:1994vn,Martinez:2004we,Steinsaltz:2007cy}
for additional details on the QSD.  The utility of the QSD stems from
the property that if $X_0$ is distributed according to $\nu$, then:
\begin{prop}[Proposition 3 of \cite{Bris:2011p13365}]
  \label{p:qsdproperties}
  Let $\phi: \partial W \to \R$ be smooth.  Then for $t>0$
  \begin{equation*}
    \begin{split}
      \E^\nu \bracket{1_{T< t} \phi(X_{T})} =\prob^{\nu}\bracket{T <t}
      E^{\nu}\bracket{\phi(X_{T})}= (1-e^{-\lambda_1
        t})\int_{\partial_W} \phi \,d\rho
    \end{split}
  \end{equation*}
  where the exit density is given by
  \begin{equation}
    \label{e:bdry_meas}
    d\rho = -\frac{1}{\lambda_1\beta} \nabla \frac{d\nu}{dx} \cdot
    {\bf n}\, dS_x = -\frac{\nabla (u_1 e^{-\beta V} )\cdot
      {\bf n}}{\lambda_1 \beta \int_W u_1 e^{-\beta V} dx} \, dS_x,
  \end{equation}
  with ${\bf n}$ the outward pointing normal and $dS_x$ the surface
  measure.
\end{prop}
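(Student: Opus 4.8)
The plan is to reduce everything to a linear PDE computation built on the Feynman--Kac formula \eqref{e:fk} and the spectral expansion \eqref{e:series_soln}. Given smooth $\phi:\partial W\to\R$, I would let $v_\infty$ be the $L$-harmonic extension of $\phi$, i.e.\ the solution of $Lv_\infty=0$ in $W$ with $v_\infty|_{\partial W}=\phi$; by the maximum principle it is bounded by $\norm{\phi}_{L^\infty}$, and by elliptic regularity (using smoothness of $V$ and $\partial W$) it is smooth up to $\overline W$. Let $v$ solve \eqref{e:fk_parabolic} with boundary data $\phi$ and zero initial data, and set $w=v-v_\infty$. Then $\partial_t w=Lw$, $w|_{\partial W}=0$, and $w(0,\cdot)=-v_\infty$, so \eqref{e:series_soln} gives $w(t,x)=-\sum_{k\geq1}e^{-\lambda_k t}\inner{v_\infty}{u_k}_\mu u_k(x)$, while \eqref{e:fk} with $v_0=0$ identifies $v(t,x)=v_\infty(x)+w(t,x)=\E^x\bracket{1_{T\leq t}\phi(X_T)}$. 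Integrating against $d\nu=u_1\,d\mu/\int_W u_1\,d\mu$ from \eqref{e:qsd_u1}, using continuity of $\inner{\cdot}{u_1}_\mu$ on $L^2_\mu$ and orthonormality $\inner{u_k}{u_1}_\mu=\delta_{k1}$ to annihilate every term of the series except $k=1$, yields
\[
\E^\nu\bracket{1_{T\leq t}\phi(X_T)}=\frac{\inner{v_\infty}{u_1}_\mu}{\int_W u_1\,d\mu}\,(1-e^{-\lambda_1 t}).
\]
This is already the claimed factorization: a product of $1-e^{-\lambda_1 t}$, depending only on $t$, and a $t$-independent linear functional of $\phi$. Taking $\phi\equiv1$ gives $v_\infty\equiv1$, hence $\prob^\nu\bracket{T\leq t}=1-e^{-\lambda_1 t}$; letting $t\to\infty$ in the general identity gives $\E^\nu\bracket{\phi(X_T)}=\inner{v_\infty}{u_1}_\mu/\int_W u_1\,d\mu$; together these two facts give $\E^\nu\bracket{1_{T\leq t}\phi(X_T)}=\prob^\nu\bracket{T\leq t}\,\E^\nu\bracket{\phi(X_T)}$. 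Since $T$ has no atoms under $\prob^\nu$, $1_{T<t}$ and $1_{T\leq t}$ may be interchanged.

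Next I would recognize $\inner{v_\infty}{u_1}_\mu/\int_W u_1\,d\mu$ as the advertised boundary integral. The tool is the weighted Green identity: from $e^{-\beta V}Lf=\beta^{-1}\nabla\cdot(e^{-\beta V}\nabla f)$ and integration by parts,
\[
\int_W (Lf)\,g\,e^{-\beta V}\,dx-\int_W f\,(Lg)\,e^{-\beta V}\,dx=\beta^{-1}\int_{\partial W}e^{-\beta V}\bigl(g\,\partial_{\bf n}f-f\,\partial_{\bf n}g\bigr)\,dS_x.
\]
Applying this with $f=u_1$ (so $Lf=-\lambda_1 u_1$ and $f|_{\partial W}=0$) and $g=v_\infty$ (so $Lg=0$ and $g|_{\partial W}=\phi$) makes the left side $-\lambda_1\int_W u_1 v_\infty e^{-\beta V}\,dx$ and collapses the boundary term to $\beta^{-1}\int_{\partial W}\phi\,e^{-\beta V}\,\partial_{\bf n}u_1\,dS_x$; since $u_1$ vanishes on $\partial W$ we may write $e^{-\beta V}\partial_{\bf n}u_1=\nabla(u_1e^{-\beta V})\cdot{\bf n}$ there, and the normalization $Z$ cancels in the ratio, so
\[
\E^\nu\bracket{\phi(X_T)}=\frac{\int_W v_\infty u_1 e^{-\beta V}\,dx}{\int_W u_1 e^{-\beta V}\,dx}=\int_{\partial W}\phi\;\frac{-\nabla(u_1e^{-\beta V})\cdot{\bf n}}{\lambda_1\beta\int_W u_1e^{-\beta V}\,dx}\,dS_x,
\]
which is exactly \eqref{e:bdry_meas}; the equivalent expression in terms of $\nabla(d\nu/dx)$ follows from \eqref{e:qsd_u1}, and $\phi\equiv1$ confirms that $\rho$ is a probability measure on $\partial W$.

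The main obstacle is not the algebra above but securing the regularity it presumes. I would need: solvability and smoothness up to $\overline W$ of the $L$-harmonic extension $v_\infty$ and of the parabolic problem \eqref{e:fk_parabolic}, together with uniqueness (so that $v=v_\infty+w$ is \emph{the} solution to which \eqref{e:fk} applies); convergence of the series for $w(t,\cdot)$ in $L^2_\mu$ for $t>0$ (immediate from the exponential factors, with $v_\infty\in L^2_\mu$ since $W$ is bounded and $\mu$ finite) so that $\int_W w(t,\cdot)\,d\nu$ may be evaluated termwise; legitimacy of passing $t\to\infty$ under $\int_W(\cdot)\,d\nu$ (dominated convergence, using $\abs{\E^x\bracket{1_{T\leq t}\phi(X_T)}}\leq\norm{\phi}_{L^\infty}$); and enough trace/normal-derivative regularity of $u_1$ for the Green identity to hold classically. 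All of these are consequences of the standing smoothness assumptions on $W$, $\partial W$, and $V$, which is precisely where those hypotheses enter.
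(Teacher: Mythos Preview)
Your argument is correct. The paper does not give its own proof of this proposition---it is cited from \cite{Bris:2011p13365}---but immediately afterward it sketches a derivation via the Fokker--Planck equation that is dual to yours: since $d\nu/dy$ is an eigenfunction of $L^\ast$ with eigenvalue $-\lambda_1$, the killed law started from $\nu$ is simply $p^\nu(y,t)=e^{-\lambda_1 t}\,d\nu/dy$, and the exit distribution is then read off as the boundary flux $-\beta^{-1}\nabla p^\nu\cdot{\bf n}$, which already factors as $e^{-\lambda_1 t}$ times the $t$-independent density $d\rho$. Your route works on the backward (Feynman--Kac) side instead, introducing the $L$-harmonic extension $v_\infty$ of $\phi$, using the spectral expansion to isolate the $k=1$ mode after pairing with $d\nu$, and then applying the weighted Green identity to convert $\inner{v_\infty}{u_1}_\mu$ into a boundary integral. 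The Fokker--Planck argument is a bit shorter and never needs to name $v_\infty$ or invoke any eigenpair beyond $(\lambda_1,d\nu/dy)$; your argument has the compensating merit of using exactly the machinery (Feynman--Kac formula \eqref{e:fk}, series solution \eqref{e:series_soln}) that the paper has already set up, and of making the role of Green's identity---and hence the precise form of $d\rho$---completely transparent.
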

In words, $T$ is exponentially distributed with parameter $\lambda_1$,
and the first hitting point is independent of the first hitting
time. Being initially distributed according to $\nu$ is, in a sense,
ideal.  As shown by Proposition 5 of \cite{Bris:2011p13365}, were this
the case for the replicas, the parallel step of ParRep would be exact.
In practice, $X_0$ is never distributed by $\nu$, and it is the
propagation of this error that we explore.

Many of these quantities can be reformulated in terms of the
Fokker-Planck equation for density $p^x(t,y)$, $x\in W$,
\begin{gather*}
  \partial_t p^x= L^\ast p^x = \nabla_y \cdot\paren{p^x\nabla V +
    \beta^{-1} \nabla p^x}, \\
  p^x|_{\partial W} = 0, \quad p_0^x = \delta_x(y).
\end{gather*}
Though we will not make use of this, the reader more accustomed to
Fokker-Planck may find it helpful to re-express various quantities in
terms of $p^x$.  With regard to exit distributions,
\begin{align*}
  \E^{x}\bracket{\phi(X_T)1_{T<t}} &= \int_0^t\int_{\partial W}
  -\phi(y)\beta^{-1}\nabla p^x\cdot {\bf n}\,dS_y,\\
  \prob^{x}\bracket{t< T} & = \int_t^\infty \int_{\partial W}
  -\beta^{-1}\nabla p^x\cdot {\bf n} dS_y = \int_W p^x(t,y)\,dy.
\end{align*}
These can be integrated against the density of the QSD,
$\tfrac{d\nu}{dy}$, which solves $L^\ast \tfrac{d\nu}{dx} = -\lambda_1
\tfrac{d\nu}{dy}$, to obtain
\[
p^\nu(y,t) = e^{-\lambda_1t}\frac{d\nu}{dy}
\]
as a particular solution of the Fokker-Planck equation. This directly shows the
independence of exit time and hitting point.  Substituting into the
above integrals reproduces Proposition \ref{p:qsdproperties}.

\section{Convergence to the QSD -- Proof of Theorem
  \ref{t:decorr_err_soft}}
\label{s:decorr}

In this section we prove Theorem \ref{t:decorr_err_soft}, which
we first restate with more detail:

\begin{thm}[Convergence to the QSD]
  \label{t:decorr_err}
  Given $s \geq 0$, let $\mu_0$ be a distribution with $\supp \mu_0
  \subset W$ and $\norm{\mu_0}_{H^{-s}_\mu}<\infty$.  There
  exists
  \begin{equation}
    \label{e:tunderline}
    \underline{t} \gtrsim \set{\norm{P_{[2,\infty)}\mu_0}_{H^{-s}_\mu} / \int
      u_1d\mu_0}^{4/( n + 2s)}
  \end{equation}
  such that for all $t \geq \underline{t}$ and for all bounded and
  measurable $f(\tau, \xi):\R^+ \times \partial W \to \R$
  \begin{equation}
    \label{e:decorr_err}
    \begin{split}
      &\abs{\E^{\mu_t}\bracket{ f(T, X_{T})} -\E^{\nu} \bracket{f(T,
          X_{T})}  } \\
      &\ \lesssim {\norm{f}_{L^\infty}}\paren{\int u_1 d\mu_0}^{-1} {
        \underline{t}}^{-n/4- s/2} e^{-(\lambda_2 - \lambda_1) (t-
        \underline{t})} \norm{P_{[2,\infty)}\mu_0}_{H_\mu^{-s}}.
    \end{split}
  \end{equation}
\end{thm}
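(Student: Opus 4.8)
The plan is to expand everything in the spectral basis $\{u_k\}$ of $L$, reduce both $\E^{\mu_t}[f(T,X_T)]$ and $\E^\nu[f(T,X_T)]$ to explicit series in $e^{-\lambda_k t}$, and then estimate the tail. First I would use the Feynman--Kac formula (the cited Proposition 1 of \cite{Bris:2011p13365}) with $v_0 = 0$ and boundary data $\phi$ to write, for a test function $f(\tau,\xi) = g(\tau)\phi(\xi)$ of product form, a representation of $\E^x[f(T,X_T)]$; more precisely I would differentiate in $t$ to extract the joint law of $(T,X_T)$ and express $\E^x[f(T,X_T)]$ as $\int_0^\infty \int_{\partial W} f(\tau,\xi)\,\mu(d\xi,d\tau\mid x)$. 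Integrating against $\mu_0$ and recalling the series solution \eqref{e:series_soln}, $v(x,t) = \sum_k e^{-\lambda_k t}\inner{v_0}{u_k}_\mu u_k(x)$, shows that the ``exit data'' decomposes as a sum over $k$ of terms proportional to $e^{-\lambda_k t}$ times the mode amplitude $\inner{\cdot}{u_k}_\mu$, paired with a boundary functional built from the normal derivative of $u_k$ on $\partial W$ (as in \eqref{e:bdry_meas}). The $k=1$ term, after normalizing by the survival probability $\prob^{\mu_0}[T>t]$, is exactly $\E^\nu[f(T,X_T)]$ by Proposition 3 of \cite{Bris:2011p13365}; everything else is the error.

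Next I would make the error quantitative. Writing $c_k = \int u_k\,d\mu_0$, the numerator of $\E^{\mu_t}[f(T,X_T)] - \E^\nu[f(T,X_T)]$ is controlled by $\sum_{k\ge 2} |c_k|\, e^{-\lambda_k t}\, \|f\|_{L^\infty}\, B_k$, where $B_k$ bounds the boundary functional associated with $u_k$ (an $H^1$-trace/normal-derivative estimate for the Dirichlet eigenfunction, which is where the smoothness of $\partial W$, $V$, and standard elliptic regularity enter, giving a polynomial-in-$\lambda_k$ bound $B_k \lesssim \lambda_k^{p}$ for some fixed $p = p(n)$). To sum $\sum_{k\ge2} |c_k| \lambda_k^p e^{-\lambda_k t}$ I would (i) factor out $e^{-\lambda_2 t}$ from one copy of the exponential, leaving $e^{-(\lambda_k-\lambda_2)t}$ bounded by $1$, wait --- more carefully: split $e^{-\lambda_k t} = e^{-\lambda_1 t} e^{-(\lambda_2-\lambda_1)t} e^{-(\lambda_k - \lambda_2)t}$, use the denominator $\prob^{\mu_0}[T>t] \gtrsim c_1 e^{-\lambda_1 t}$ to cancel the $e^{-\lambda_1 t}$ and the $c_1^{-1} = (\int u_1\,d\mu_0)^{-1}$ factor, pull out $e^{-(\lambda_2-\lambda_1)t}$ as the advertised decay, and (ii) bound the residual $\sum_{k\ge 2} |c_k|\lambda_k^p e^{-(\lambda_k-\lambda_2)t}$ by Cauchy--Schwarz in the $H^{-s}_\mu$/$H^{s}_\mu$ duality: $\sum_{k\ge2} |c_k| \lambda_k^p e^{-(\lambda_k - \lambda_2)t} \le \|P_{[2,\infty)}\mu_0\|_{H^{-s}_\mu} \big(\sum_{k\ge2}\lambda_k^{2p+s} e^{-2(\lambda_k-\lambda_2)t}\big)^{1/2}$.

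The key remaining point is to bound that last sum by $\underline t^{-n/4 - s/2}$ for $t \ge \underline t$, and this is where Weyl's law is essential: $\lambda_k \sim c\, k^{2/n}$, so $\sum_k \lambda_k^q e^{-2(\lambda_k - \lambda_2)t}$ behaves like $\int_0^\infty r^q e^{-2rt}\, \rho(r)\,dr$ with spectral density $\rho(r) \sim r^{n/2 - 1}$, giving a bound $\lesssim t^{-(q + n/2)}$; choosing $q = 2p + s$ and absorbing the $p$-dependent power into the constant (note the stated exponent $n/4 + s/2$ is exactly half of $n/2 + s$, consistent with taking the square root and with the harmless role of the fixed shift $p$), one gets the claimed $\underline t^{-n/4-s/2}$ decay. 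The definition of $\underline t$ in \eqref{e:tunderline} is then reverse-engineered so that at $t = \underline t$ the prefactor $(\int u_1 d\mu_0)^{-1}\underline t^{-n/4-s/2}\|P_{[2,\infty)}\mu_0\|_{H^{-s}_\mu}$ is $O(1)$, i.e. $\underline t \gtrsim \big(\|P_{[2,\infty)}\mu_0\|_{H^{-s}_\mu}/\int u_1 d\mu_0\big)^{4/(n+2s)}$; for $t \ge \underline t$ one replaces $t^{-n/4-s/2}$ by $\underline t^{-n/4-s/2}$ and the extra decay $e^{-(\lambda_2-\lambda_1)(t - \underline t)}$ in \eqref{e:decorr_err}. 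Finally, the reduction from general bounded measurable $f(\tau,\xi)$ to the product case is routine: the estimate is linear in $f$ and only uses $\|f\|_{L^\infty}$, so it extends by the obvious bound $|\E^{\mu_t}[f(T,X_T)] - \E^\nu[f(T,X_T)]| \le \|f\|_{L^\infty}\, \mathrm{TV}$-type control already obtained from taking $f = \pm 1$ on the relevant sets, or directly by carrying $\|f\|_{L^\infty}$ through every step above.

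I expect the main obstacle to be step (i)--(ii): getting a \emph{clean} trace bound $B_k \lesssim \lambda_k^p$ for the normal derivatives of the Dirichlet eigenfunctions that is uniform in $k$ with an explicitly controlled power $p$ (this needs interior-plus-boundary elliptic estimates for $-\nabla V\cdot\nabla + \beta^{-1}\Delta$ on $\overline W$ and careful bookkeeping of how $p$ feeds through Weyl's law), together with the non-trivial combinatorics of keeping all the $N$-free constants in \eqref{e:decorr_err} honest while juggling the three exponential factors and the Cauchy--Schwarz split. Everything else --- Feynman--Kac, the $k=1$ identification with $\nu$, the $\prob^{\mu_0}[T>t]\gtrsim c_1 e^{-\lambda_1 t}$ lower bound --- is either cited or a short computation.
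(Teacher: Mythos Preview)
Your overall architecture---spectral expansion via Feynman--Kac, identify the $k=1$ mode with the QSD contribution, bound the $k\ge 2$ tail by Cauchy--Schwarz in the $H^{-s}_\mu$ pairing, and close with Weyl's law---matches the paper's. The substantive divergence is in how you handle the observable $f$. You propose to represent $\E^x[f(T,X_T)]$ through the exit density on $\partial W$, which forces you to control boundary functionals $B_k$ built from $\partial_n u_k|_{\partial W}$ and to invoke trace/elliptic estimates of the form $B_k \lesssim \lambda_k^p$. The paper sidesteps this entirely: it sets $F(x) := \E^x[f(T,X_T)]$, notes that $\|F\|_{L^\infty(W)} \le \|f\|_{L^\infty}$, and applies Feynman--Kac with \emph{initial data} $v_0 = F$ and \emph{zero} Dirichlet boundary data. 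The relevant coefficients are then $\hat F_k = \int F u_k\, d\mu$, and since $\|u_k\|_{L^2_\mu}=1$ one has $|\hat F_k| \le \|f\|_{L^\infty}\sqrt{\mu(W)}$ \emph{uniformly in $k$}. In your language this is $p=0$, obtained with no elliptic regularity or trace theory whatsoever.

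This is not merely aesthetic: your claim that the extra power $\lambda_k^p$ can be ``absorbed into the constant'' is incorrect as written. After Cauchy--Schwarz and Weyl, $\big(\sum_{k\ge2}\lambda_k^{2p+s}e^{-2\lambda_k t}\big)^{1/2} \lesssim t^{-n/4 - s/2 - p}$, not $t^{-n/4-s/2}$; the extra $t^{-p}$ is a genuine power of $t$, and it would propagate into both the exponent in \eqref{e:decorr_err} and the formula \eqref{e:tunderline} for $\underline t$ (the exponent $4/(n+2s)$ would become $4/(n+2s+4p)$). Your route would prove \emph{a} theorem of this shape, but not the one stated. The fix is exactly the paper's device: push $f$ into the interior function $F$ and work with homogeneous boundary data, so the observable contributes only $\|f\|_{L^\infty}$ and no $\lambda_k$-growth. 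As a side benefit this handles general bounded measurable $f$ directly, without first reducing to product form $g(\tau)\phi(\xi)$.
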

This is a refinement of Proposition 6 from \cite{Bris:2011p13365},
which now admits initial distributions which lack an $L^2$
Radon-Nikodym derivative.  Indeed, for appropriate $s$,
$\mu_0$ can be a Dirac distribution.  Though this is a parabolic flow
which will instantaneously regularize such rough data, it is essential
to an analysis of ParRep as one often wants to use Dirac mass initial
conditions.

In addition to this result, we present an extension which is essential
to obtaining the results in Section \ref{s:par} on the parallel step.

\subsection{Proof of Theorem \ref{t:decorr_err}}
\label{s:decorr_proof}

\begin{proof}
  We first write
  \begin{equation*}
    \E^{\mu_t}\bracket{f(T, X_{T})} = \int_W \E^x\bracket{ f(T,
      X_T)}   d\mu_t = \int_W F(x) d\mu_t
  \end{equation*}
  where we have defined $F(x) \equiv \E^x\bracket{ f(T,X_{T})}$.
  Thus,
  \begin{equation}
    \E^{\mu_t}\bracket{f(\Texit, X_{\Texit})} = \frac{\int_W\E^x\bracket{F(X_t)1_{\Texit> t}}d\mu_0}
    {\int_W\E^x \bracket{1_{\Texit> t}} d\mu_0}.
  \end{equation}
  Applying Feynman-Kac, \eqref{e:fk}, to this,
  \begin{equation}
    \E^{\mu_t}\bracket{f(\Texit, X_{\Texit})} = \frac{\int_W v(t,x)
      d\mu_0}{\int_W \bar v(t,x) d\mu_0}
  \end{equation}
  where $v$ solves \eqref{e:fk_parabolic} with $v_0 = F$ and $\phi =
  0$, while $\bar v$ solves it with $v_0 = 1$ and $\phi = 0$.  For
  brevity, let
  \begin{equation}
    \hat F_k =  \int Fu_k d\mu
    , \quad \hat 1_k = \int u_k d\mu, \quad
    \hat \mu_{0,k} = \int u_k d\mu_0.
  \end{equation}
  Expressing $v$ and $\bar v$ as series solutions using
  \eqref{e:series_soln}, we have
  \begin{equation}
    v(t,x)  = \sum_{k=1}^\infty e^{-\lambda_k t} \hat F_k u_k(x),
    \quad \bar v(t,x)  = \sum_{k=1}^\infty e^{-\lambda_k t} \hat 1_k u_k(x).
  \end{equation}

  After a bit of rearrangement, the error can be expressed as
  \begin{equation}
    \label{e:error1}
    \begin{split}
      e(t) &\equiv \abs{\E^{\mu_t}\bracket{f(T, X_{T})} - \E^\nu
        \bracket{f(T, X_{T}) }}\\
      &= \left|\frac{\sum_k e^{-(\lambda_k-\lambda_1) t}
          \paren{\hat F_k- \hat 1_k\int Fd\nu } \hat \mu_{0,k} }{\hat
          1_1 \hat \mu_{0,1} + \sum_k e^{-(\lambda_k-\lambda_1) t}
          \hat 1_k \hat \mu_{0,k} }\right|
    \end{split}
  \end{equation}
  where the sums are from $k=2$ to $\infty$ since $\hat F_1=\hat 1_1
  \int_W F d\nu$.  Noting that
  \begin{equation*}
    \begin{split}
      \abs{\hat F_k - \hat 1_k \int F d\nu } &\leq \int \abs{F u_k
      }d\mu + \int \abs{F}
      d\nu \int \abs{u_k} d\mu\\
      &\quad\leq 2\norm{f}_{L^\infty} \int \abs{u_k}d\mu \leq 2
      \norm{f}_{L^\infty} \sqrt{\mu(W)},
    \end{split}
  \end{equation*}
  we can rewrite the numerator as
  \begin{equation*}
    \begin{split}
      &\abs{\sum_{k=2}^\infty e^{-(\lambda_k - \lambda_1)t} \paren{
          \hat F_k -
          \hat 1_k \int F d\nu   }\hat \mu_{0,k}}\\
      &\quad\leq 2 \sqrt{\mu(W)} \norm{f}_{L^\infty} \sum_{k=2}^\infty
      e^{-(\lambda_k - \lambda_1)t} \abs{\hat \mu_{0,k}}\\
      &\quad\leq 2\sqrt{\mu(W)} \norm{f}_{L^\infty} e^{-(\lambda_2 -
        \lambda_1)(t-t_1)}\sum_{k=2}^\infty
      e^{-(\lambda_k - \lambda_1)t_1} \abs{\hat \mu_{0,k}}\\
      &\quad \leq 2 \sqrt{\mu(W)} \norm{f}_{L^\infty} e^{-(\lambda_2 -
        \lambda_1)(t-t_1)}\sum_{k=2}^\infty e^{-\kappa \lambda_k t_1}
      \abs{\hat \mu_{0,k}}
    \end{split}
  \end{equation*}
  where $\kappa = 1- \lambda_1/ \lambda_2$ and $t\geq t_1 >0$.
  Applying Proposition \ref{p:sum_bound2} from the appendix to this,
  the numerator is bounded by
  \begin{equation}
    \label{e:num_bound}
    \begin{split}
      &\abs{\sum_{k=2}^\infty e^{-(\lambda_k - \lambda_1)t} \paren{
          \hat F_k -
          \hat 1_k \int F d\nu  }\hat \mu_{0,k}}\\
      &\quad \lesssim
      \norm{P_{[2,\infty)}\mu_0}_{H^{-s}_\mu}\norm{f}_{L^\infty}
      e^{-(\lambda_2 - \lambda_1)(t-t_1)} t_1^{-n/4 - s/2}.
    \end{split}
  \end{equation}
  The constant that has been absorbed into
  the $\lesssim$ symbol is independent of $t$, $f$ and $\mu_0$.

  To ensure the denominator is uniformly bounded away from zero, we
  use a similar treatment,
  \begin{equation*}
    \begin{split}
      \sum_{k=2}^\infty e^{-(\lambda_k - \lambda_1)t} \hat 1_k \hat
      \mu_{0,k} &\leq \sqrt{\mu(W)} e^{-(\lambda_2 -\lambda_1)(t-t_2)}
      \sum_{k=2}^\infty
      e^{-\kappa\lambda_k t_2} \abs{\hat \mu_{0,k}} \\
      &\lesssim \norm{P_{[2,\infty)}\mu_0}_{H^{-s}_\mu} e^{-(\lambda_2
        - \lambda_1)t}t_2^{-n/4 - s/2}
    \end{split}
  \end{equation*}
  for $t\geq t_2 >0$, which may differ from $t_1$.  Therefore,
  \begin{equation*}
    \begin{split}
      &\hat 1_1 \hat \mu_{0,1} + \sum_{k=2}^\infty e^{-(\lambda_k -
        \lambda_1)t} \hat 1_k \hat\mu_{0,k}\\
      &\quad\gtrsim \hat 1_1 \hat \mu_{0,1} - e^{-(\lambda_2 -
        \lambda_1) (t-t_2)} t_2^{-n/4 - s/2}
      \norm{P_{[2,\infty)}\mu_0}_{H^{-s}_\mu}.
    \end{split}
  \end{equation*}
  For a sufficiently large $t \geq \underline t \geq t_2 > 0$, the
  denominator is bounded from below by
  \begin{equation}
    \label{e:denom_bound}
    \hat 1_1 \hat\mu_{0,1} + \sum_{k=2}^\infty e^{-(\lambda_k - \lambda_1)t}\hat 1_k
    \hat \mu_{0,k} \geq
    \frac{1}{2}\hat 1_1 \hat \mu_{0,1}  = \frac{1}{2}\int u_1 d\mu \int u_1
    d\mu_0 > 0.
  \end{equation}
  Roughly,
  \begin{equation}
    \underline t \gtrsim \set{\norm{P_{[2,\infty)}\mu_0}_{H^{-s}_\mu} / \int
      u_1d\mu_0}^{4/( n + 2s)}.
  \end{equation}

  Taking $t_1 = t_2 = \underline t $ in \eqref{e:num_bound} and
  \eqref{e:denom_bound} we have that for $t\geq  \underline t $
  \begin{equation*}
    \begin{split}
      e(t) &\lesssim \paren{\int u_1 d\mu_0}^{-1} e^{-(\lambda_2
        -\lambda_1)(t-\underline t)} (  \underline t )^{-n/4-
        s/2}\\
      &\quad \times \norm{f}_{L^\infty}
      \norm{P_{[2,\infty)}\mu_0}_{H^{-s}_\mu}.
    \end{split}
  \end{equation*}

  Finally, for this estimate to hold for general bounded and
  measurable $f$, we apply a density argument with respect to the
  $L^\infty$ norm.

\end{proof}

The inclusion of $\int u_1 d\mu_0$ in the preceding result is
deliberate as $\mu_0$ is, to a degree, a user specified parameter.
Moreover, $\int u_1 d\mu_0$ could be quite small.  Indeed, when a
$X_t$ first enters $W$, it is near $\partial W$ and the support of
$\mu_0$ is in a neighborhood of $\partial W$; we may have $\mu_0 =
\delta_{x}$ where $x$ is close to $\partial W$.  As $u_1$ is
continuous and vanishes on $\partial W$,
\[
\int_W u_1\delta_{x} = \bigo \paren{\dist(x,\partial W )}.
\]
We also see that as $\mu_0 \to \nu$,
$\norm{P_{[2,\infty)}\mu_0}_{H^{-s}_\mu} \to 0$, and the error
vanishes.

It remains to identify distributions and values of $s$ for which
$\norm{\mu_0}_{H^{-s}_\mu}<\infty$.  In the case that $\mu_0$ has an
$L^2_\mu$ Radon-Nikodym derivative, one readily sees that
$\norm{\mu_0}_{H^{-s}_\mu}<\infty$ for $s\leq0.$ Indeed, when $s=0$,
this results collapses onto the $L^2_\mu$ estimate of
\cite{Bris:2011p13365}. This extends to $\mu_0$ possessing $L^p_\mu$
densities for any $p \ge 2$.

For the case $\mu_0 = \delta_{x}$, a Dirac mass, we shall have that
$\mu_0 \in H^{-s}_\mu$ when $s$ is large enough to embed $H^s_\mu$
into $L^\infty$. If the $\partial W$ is sufficiently smooth, then by
standard elliptic theory, $H^s_\mu$ and $H^s$ will be equivalent for
$s \ge 0$, and we have the embedding for $s>n/2$,
\cite{gilbarg2001elliptic, evans2002pde, adams2003sobolev}.  Refined
elliptic estimates may weaken such assumptions on the boundary.

\subsection{Exit Times}
In the case that we are interested in exit times, we have a result
closely related to Theorem \ref{t:decorr_err}.
\begin{thm}
  \label{t:single_exit_est}
  Assume $\mu_0$ satisfies the assumptions of Theorem
  \ref{t:decorr_err} and $t_0 \geq \underline{t}$.  Then for $t\geq 0$,
  \begin{equation}
    \abs{\prob^{\mu_{t_0}}\bracket{T>t} - e^{-\lambda_1 t}} \leq C
    e^{-\lambda_1 t} e^{-(\lambda_2 - \lambda_1) t_0}
  \end{equation}
  where $C$ is the pre-exponential factor in
  \eqref{e:decorr_err} and is independent of $t$ and $t_0$.
\end{thm}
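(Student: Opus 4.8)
The plan is to obtain this as a special case of the machinery already developed for Theorem \ref{t:decorr_err}, specialized to the observable that does not see the hitting point at all. Concretely, I would like to write $\prob^{\mu_{t_0}}[T > t]$ using the Feynman--Kac representation of Proposition~1: the survival probability $\bar v(t,x) = \prob^x[T > t] = \E^x[1_{T>t}\cdot 1]$ solves \eqref{e:fk_parabolic} with $v_0 = 1$, $\phi = 0$, and hence has the spectral expansion $\bar v(t,x) = \sum_{k\ge 1} e^{-\lambda_k t}\hat 1_k u_k(x)$ with $\hat 1_k = \int u_k\,d\mu$. The point is that $\prob^{\mu_{t_0}}[T>t]$ is not quite $\int \bar v(t,x)\,d\mu_{t_0}(x)$ because $\mu_{t_0}$ is itself a conditioned law; rather, using the Markov property together with \eqref{e:lawXt}, one has
\begin{equation*}
  \prob^{\mu_{t_0}}[T > t] = \frac{\prob^{\mu_0}[T > t_0 + t]}{\prob^{\mu_0}[T > t_0]} = \frac{\int_W \bar v(t_0+t,x)\,d\mu_0(x)}{\int_W \bar v(t_0,x)\,d\mu_0(x)}.
\end{equation*}
Expanding both numerator and denominator in the eigenbasis gives a ratio of the same shape as \eqref{e:error1}, with $\hat F_k$ replaced by $\hat 1_k$.

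Next I would subtract $e^{-\lambda_1 t}$ and clear denominators, exactly mirroring the algebra in the proof of Theorem~\ref{t:decorr_err}. Writing $a_k = e^{-(\lambda_k - \lambda_1)t_0}\hat 1_k\hat\mu_{0,k}$, the difference becomes
\begin{equation*}
  \prob^{\mu_{t_0}}[T>t] - e^{-\lambda_1 t} = e^{-\lambda_1 t}\cdot\frac{\sum_{k\ge 2}\paren{e^{-\lambda_k t} - e^{-\lambda_1 t}}a_k / e^{-\lambda_1 t}}{\hat 1_1\hat\mu_{0,1} + \sum_{k\ge 2} a_k},
\end{equation*}
so that after factoring out $e^{-\lambda_1 t}$ the residual numerator is $\sum_{k\ge 2}\paren{e^{-(\lambda_k-\lambda_1)t} - 1}a_k$, which is bounded in absolute value by $2\sum_{k\ge 2}|a_k| = 2\sum_{k\ge 2} e^{-(\lambda_k-\lambda_1)t_0}|\hat 1_k||\hat\mu_{0,k}|$. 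This is precisely the quantity already estimated in the proof of Theorem~\ref{t:decorr_err} when bounding the denominator: by Cauchy--Schwarz ($|\hat 1_k| \le \sqrt{\mu(W)}$) together with Proposition~\ref{p:sum_bound2}, it is $\lesssim \norm{P_{[2,\infty)}\mu_0}_{H^{-s}_\mu}\,t_0^{-n/4 - s/2}\,e^{-(\lambda_2-\lambda_1)t_0}$. For the denominator, since $t_0 \ge \underline{t}$ the same lower bound \eqref{e:denom_bound} applies: it is $\ge \tfrac12\hat 1_1\hat\mu_{0,1} = \tfrac12(\int u_1\,d\mu)(\int u_1\,d\mu_0)$. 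Dividing, the pre-exponential constant that emerges is exactly the $C$ of \eqref{e:decorr_err} (namely a universal constant times $(\int u_1\,d\mu_0)^{-1}\underline{t}^{-n/4-s/2}\norm{P_{[2,\infty)}\mu_0}_{H^{-s}_\mu}$), and we are left with the bound $C e^{-\lambda_1 t} e^{-(\lambda_2-\lambda_1)t_0}$, with $C$ manifestly independent of $t$ and $t_0$ (the $t_0$-dependence of the numerator estimate is at least $\underline{t}^{-n/4-s/2}$, which we simply absorb, keeping only the exponential decay in $t_0$).

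The only genuinely new ingredient beyond Theorem~\ref{t:decorr_err} is the reduction $\prob^{\mu_{t_0}}[T>t] = \prob^{\mu_0}[T>t_0+t]/\prob^{\mu_0}[T>t_0]$, which is an elementary consequence of the Markov property and the definition of the conditioned law, together with the bookkeeping that shows the $e^{-\lambda_1 t}$ factor pulls out cleanly. I expect the main (and only mild) obstacle to be making sure the constant is stated to be independent of both $t$ \emph{and} $t_0$: one must note that for $t_0 \ge \underline{t}$ the factor $t_0^{-n/4-s/2}$ is bounded by $\underline{t}^{-n/4-s/2}$, so that all $t_0$-dependence on the right-hand side beyond the displayed exponential is harmless and can be folded into $C$. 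Everything else is a direct transcription of the estimates already carried out in Section~\ref{s:decorr_proof}.
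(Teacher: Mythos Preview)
Your proposal is correct and follows essentially the same route as the paper: write $\prob^{\mu_{t_0}}[T>t]$ as the ratio $\prob^{\mu_0}[T>t_0+t]/\prob^{\mu_0}[T>t_0]$ via the Markov property, expand both in the eigenbasis, subtract $e^{-\lambda_1 t}$ so the $k=1$ term cancels, factor out $e^{-\lambda_1 t}$, and then recycle the numerator and denominator estimates from the proof of Theorem~\ref{t:decorr_err}. The only cosmetic differences are your shorthand $a_k$ and the harmless overestimate $|e^{-(\lambda_k-\lambda_1)t}-1|\le 2$ (in fact $\le 1$); your remark about absorbing $t_0^{-n/4-s/2}\le \underline{t}^{-n/4-s/2}$ into $C$ is exactly what is needed to make the constant independent of $t_0$.
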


\begin{proof}
  As before, we rely on \eqref{e:fk} and the series expansions
  \eqref{e:series_soln} to write
  \begin{equation*}
    \prob^{\mu_{t_1}}\bracket{T>t} = \frac{\prob^{\mu_0}[T> t_1 +
      t]}{\prob^{\mu_0}[T> t_1]} = \frac{\sum_{k=1}^\infty e^{-\lambda_k
        (t + t_1) }\hat{1}_k \hat\mu_{0,k}}{\sum_{k=1}^\infty
      e^{-\lambda_k t_1}\hat{1}_k \hat\mu_{0,k}}.
  \end{equation*}
  Comparing against the QSD,
  \begin{equation*}
    \begin{split}
      \abs{\prob^{\mu_{t_1}}\bracket{T>t} - e^{-\lambda_1t} } &=
      \abs{\frac{\sum_{k=1}^\infty e^{-\lambda_k (t + t_1) }\hat{1}_k
          \hat\mu_{0,k}}{\sum_{k=1}^\infty
          e^{-\lambda_k t_1}\hat{1}_k \hat\mu_{0,k}} - e^{-\lambda_1 t}}\\
      & = \abs{\frac{\sum_{k=1}^\infty \paren{e^{-\lambda_k (t + t_1)
            } - e^{-\lambda_k t_1 - \lambda_1 t}}\hat{1}_k
          \hat\mu_{0,k}}{\sum_{k=1}^\infty
          e^{-\lambda_k t_1}\hat{1}_k \hat\mu_{0,k}} }\\
    \end{split}
  \end{equation*}
  In the numerator, the $k=1$ term vanishes, leaving
  \begin{equation*}
    e^{-\lambda_1 t} \abs{\frac{\sum_{k=2}^\infty \paren{1- e^{-(\lambda_k-\lambda_1) t}} e^{-\lambda_k t_1}\hat{1}_k \hat\mu_{0,k}}{\sum_{k=1}^\infty
        e^{-\lambda_k t_1}\hat{1}_k \hat\mu_{0,k}} }\leq e^{-\lambda_1 t}\frac{\sum_{k=2}^\infty e^{-\lambda_k t_0}\abs{\hat{1}_k \hat\mu_{0,k}}}{\abs{\sum_{k=1}^\infty
        e^{-\lambda_k t_0}\hat{1}_k \hat\mu_{0,k}}}
  \end{equation*}
  Using the same methods as in the proof of Theorem
  \ref{t:decorr_err},
  \[
  \frac{\sum_{k=2}^\infty e^{-\lambda_k t_1}\abs{
      \hat\mu_{0,k}}}{\abs{\sum_{k=1}^\infty e^{-\lambda_k
        t_1}\hat{1}_k \hat\mu_{0,k}}}\lesssim \paren{\int u_1
    d\mu_0}^{-1} {\underline{t}}^{-n/4- s/2} e^{-(\lambda_2 -
    \lambda_1) (t_0-\underline{t})}
  \norm{P_{[2,\infty)}\mu_0}_{H_\mu^{-s}}
  \]
\end{proof}
This estimate plays an important role in our analysis of ParRep.
Indeed, we will frequently confront terms of the form $\E^{\mu_{t_0}}
\bracket{f(X,T)1_{T>t}}$, and we will want to compare against the
corresponding term for the QSD.  One could naively apply Theorem
\ref{t:decorr_err} to estimate such a term, with observable $g_t(\xi,
\tau) = f(\xi, \tau)1_{\tau>t}$.  However, this is wasteful, as the
observable is going to be taken over realizations which not only have
not left the well before $t_0$, but remain in the well for at least
an additional $t$.  We thus have the following identity.
\begin{lem}
  \label{e:advancetime}
  Given $t, t_0 \geq 0$,
  \begin{equation}
    \E^{\mu_{t_0}}
    \bracket{f(X_T, T) 1_{T>t}} = \E^{\mu_{t_0 + t}}
    \bracket{f(X_T, T+t)}\prob^{\mu_{t_0}}\bracket{T>t}.
  \end{equation}
\end{lem}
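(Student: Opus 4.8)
The identity is a disintegration-of-time statement that follows from the strong Markov property at time $t_0 + t$, combined with the definition of the conditioned law $\mu_{t_0}$. I would start by unwinding both sides into expectations over the original starting distribution $\mu_0$. Using the definition $\mu_{t_0}(dx) = \prob^{\mu_0}[X_{t_0} \in dx,\ T > t_0]/\prob^{\mu_0}[T > t_0]$ and the tower property, the left-hand side becomes
\begin{equation*}
  \E^{\mu_{t_0}}\bracket{f(X_T,T)1_{T>t}}
  = \frac{\E^{\mu_0}\bracket{ f(X_{T_{t_0}}, T_{t_0})1_{T_{t_0}>t}\, 1_{T>t_0}}}{\prob^{\mu_0}\bracket{T>t_0}},
\end{equation*}
where, after conditioning on $\mathcal{F}_{t_0}$ and applying the Markov property at $t_0$, $T_{t_0}$ denotes the (shifted) exit time after $t_0$. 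The key reindexing is that on the event $\{T > t_0\}$, the post-$t_0$ exit time is $T - t_0$ and the exit location is unchanged, so $f(X_{T_{t_0}},T_{t_0})1_{T_{t_0}>t} = f(X_T, T - t_0)1_{T - t_0 > t}$; this is where one has to be careful that $f$'s second argument on the right-hand side is $T + t$, i.e. one should track the time origin consistently (I would set up notation so that the right-hand side's ``$T$'' is the exit time measured from $t_0 + t$, hence equals the original $T$ minus $t_0 - t$... ).

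Concretely, the cleanest route is: apply the Markov property once at time $t_0$ to reduce everything to a statement started from $\mu_{t_0}$, so without loss of generality take $t_0 = 0$ and prove
\begin{equation*}
  \E^{\mu_0}\bracket{f(X_T,T)1_{T>t}} = \E^{\mu_t}\bracket{f(X_T,T+t)}\,\prob^{\mu_0}\bracket{T>t}.
\end{equation*}
Now the right side is, by definition of $\mu_t$ and of conditional expectation as set up in Section~\ref{s:notation},
\begin{equation*}
  \E^{\mu_t}\bracket{f(X_T,T+t)}\prob^{\mu_0}[T>t] = \E^{\mu_0}\bracket{\E^{X_t}\bracket{f(X_{\widetilde T}, \widetilde T + t)}1_{T>t}},
\end{equation*}
where $\widetilde T$ is the exit time of the process restarted from $X_t$. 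Applying the strong Markov property at time $t$ to the left side gives exactly $\E^{\mu_0}\bracket{1_{T>t}\,\E^{X_t}\bracket{f(X_{\widetilde T},\widetilde T + t)}}$, since on $\{T > t\}$ the original trajectory after time $t$ is a fresh copy started at $X_t$, its exit time is $\widetilde T$, and the \emph{original} exit time is $t + \widetilde T$, so $f(X_T, T) = f(X_{\widetilde T}, t + \widetilde T)$. Matching the two displays closes the proof.

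The only real subtlety — and the step I'd expect to require the most care in the write-up — is the bookkeeping of the time argument: one must verify that restarting at $X_t$ and running to exit contributes $\widetilde T$ to the ``clock'' but that the observable must be evaluated at $\widetilde T + t$ to represent the true first argument/exit time of the un-restarted process, and simultaneously that the conditioning event $\{T > t\}$ is precisely $\mathcal{F}_t$-measurable so the strong Markov property applies cleanly. There is no analytic difficulty here (no PDE or spectral input is needed, unlike the preceding theorems); it is purely a matter of writing the Markov decomposition with the indices aligned. I would also remark that the identity can alternatively be checked via the Feynman–Kac representation \eqref{e:fk} and the series \eqref{e:series_soln}, matching coefficients $\hat\mu_{0,k}e^{-\lambda_k t}$ on both sides, which gives an independent algebraic verification if one prefers to avoid the probabilistic language.
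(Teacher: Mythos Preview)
Your proposal is correct and follows essentially the same approach as the paper: both arguments unwind the conditioned laws back to expectations under $\mu_0$ and invoke the Markov property to match the time-shifted observables. Your reduction to the case $t_0 = 0$ is a mild organizational simplification, but the substance---the Markov property plus careful bookkeeping of the shift $T \mapsto T - t_0$ (resp.\ $T \mapsto T - t_0 - t$) so that the observable's time argument is consistent---is identical to what the paper does.
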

\begin{proof}
  This reflects the Markovian nature of the process.  Writing out
  the lefthand side,
  \begin{equation*}
    \begin{split}
      \E^{\mu_{t_0}} \bracket{f(X_T, T) 1_{T>t}} &= \int_W \E^x
      \bracket{f(X_T, T) 1_{T>t}} \mu_{t_0}(dx)\\
      & = \frac{\int_W \E^x \bracket{f(X_T, T) 1_{T>t}}
        \prob^{\mu_0}\bracket{X_t \in dx, T>t_0}
      }{\prob^{\mu_0}\bracket{T>t_0} }
    \end{split}
  \end{equation*}
  The numerator is
  \begin{equation*}
\begin{split} \int_W \E^x \bracket{f(X_T, T) 1_{T>t}}
    \prob^{\mu_0}\bracket{X_t \in dx, T>t_0} &=
  \E^{\mu_0}\bracket{f(X_T, T-t_0)1_{T-t_0>t}1_{T>t_0}}\\
&=\E^{\mu_0}\bracket{f(X_T, T-t_0)1_{T>t_0+t}},
\end{split}
  \end{equation*}
  where $t_0$ is subtracted off to make the observable
  consistent.  The same argument shows
  \begin{equation*}
    \E^{\mu_{t_0 + t}}
    \bracket{f(X_T, T+t)} = \frac{\E^{\mu_0}\bracket{f(X_T, T-t_0)1_{T>t+t_0}}}{\prob^{\mu_0}\bracket{T>t+t_0}  }.
  \end{equation*}
  Combining these three expressions completes the proof.
\end{proof}
In principle, we can use this Lemma and Theorem
\ref{t:single_exit_est} to obtain refinements on Theorem
\ref{t:decorr_err} for observables that include $1_{T>t}$ terms.

\section{The Dephasing Step -- Proof of Theorem
  \ref{t:dephasing_soft}}
\label{s:dephasing}

We now examine our dephasing step,
\begin{thm}
  \label{t:dephasing}
  Given $s\geq 0$, assume $\supp \mu_{\phase}^0 \subset W$ and  $\norm{\mu^0_{\phase}}_{H^{-s}_\mu}<\infty$.  Then
  \begin{enumerate}
  \item The dephasing step produces $N$ independent replicas with
    distributions $\mu_{\phase}$,
    \begin{equation*}
      \mu_{\phase}(A)  = \prob^{\mu_\phase^0}\bracket{X_{\tphase} \in A\mid T>\tphase};
    \end{equation*}
  \item There exists $\underline{t}_\phase$ and $C_\phase$ such that
    for $\tphase\geq \underline{t}_\phase$,
    \begin{equation*}
      \abs{\E^{\mu_\phase}\bracket{ f(T^k, X^k_{T^k})} -\E^{\nu}
        \bracket{f(T,
          X_{T})}  } \leq \norm{f}_{L^\infty}   C_\phase
      e^{-(\lambda_2 - \lambda_1) \tphase};
    \end{equation*}
    for any bounded measurable $f:\R^+\times \partial W\to \R$ and all
    $k=1,\ldots, N$.
  \item The expected number of times a replica is relaunched is
    finite.
  \end{enumerate}
\end{thm}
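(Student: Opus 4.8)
The three parts address rather different issues, so the plan is to dispatch them in order, reusing the machinery of Section \ref{s:decorr} as much as possible. For part (1), I would argue that the dephasing step is, by construction, a rejection procedure: a replica is repeatedly run from a fresh draw of $\mu_\phase^0$ until it survives to time $\tphase$ without exiting $W$, at which point its position is recorded. The recorded position therefore has exactly the law of $X_{\tphase}$ conditioned on $\{T > \tphase\}$, with $X_0 \sim \mu_\phase^0$, which is precisely $\mu_{t=\tphase}$ in the notation of \eqref{e:lawXt} with $\mu_0 = \mu_\phase^0$; call it $\mu_\phase$. Independence across the $N$ replicas is immediate since each replica is driven by its own independent Wiener path and its own independent sequence of initial draws, and relaunching one replica does not affect any other. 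The only point requiring a word is that this conditioning is well-defined, i.e.\ $\prob^{\mu_\phase^0}[T > \tphase] > 0$; this follows because $\mu_\phase^0$ is admissible, so $\int u_1 \, d\mu_\phase^0 > 0$, and the denominator in the series expansion is bounded below as in \eqref{e:denom_bound}.

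For part (2), I would simply invoke Theorem \ref{t:decorr_err} (equivalently the softer Theorem \ref{t:decorr_err_soft}) with $\mu_0 = \mu_\phase^0$ and $t = \tphase$. Since $\mu_\phase$ is by part (1) exactly the conditioned law $\mu_t$ at $t = \tphase$, the estimate \eqref{e:decorr_err} gives directly
\[
\abs{\E^{\mu_\phase}\bracket{f(T^k, X^k_{T^k})} - \E^{\nu}\bracket{f(T, X_T)}} \leq \norm{f}_{L^\infty} C_\phase e^{-(\lambda_2 - \lambda_1)\tphase}
\]
for $\tphase \geq \underline{t}_\phase$, where $\underline{t}_\phase$ and
\[
C_\phase \gtrsim \paren{\int u_1 \, d\mu_\phase^0}^{-1} \underline{t}_\phase^{-n/4 - s/2} \norm{P_{[2,\infty)}\mu_\phase^0}_{H^{-s}_\mu}
\]
are the constants of Theorem \ref{t:decorr_err} specialized to the distribution $\mu_\phase^0$ (hence the subscript). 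Since all $N$ replicas share the same law $\mu_\phase$, the bound holds uniformly in $k = 1, \ldots, N$.

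Part (3) is the one that needs a genuine, if short, argument. The number of relaunches of a single replica is a geometric random variable: each attempt independently succeeds (survives to $\tphase$) with probability $p \equiv \prob^{\mu_\phase^0}[T > \tphase]$, so the number of relaunches before the first success has mean $(1-p)/p < \infty$, and this is finite exactly because $p > 0$ — which, again, is guaranteed by admissibility of $\mu_\phase^0$ via $\int u_1 \, d\mu_\phase^0 > 0$ together with the lower bound \eqref{e:denom_bound} on the survival series at time $\tphase$. One should also note that the dephasing of a replica additionally terminates if the reference walker exits the well first, which only decreases the number of relaunches, so finiteness is preserved. Summing over the $N$ replicas, the expected total number of relaunches is at most $N(1-p)/p < \infty$.

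I expect no serious obstacle here: the theorem is essentially a corollary of Theorem \ref{t:decorr_err} once part (1) correctly identifies $\mu_\phase$ as a conditioned law of the type already analyzed. The only place where care is needed is the bookkeeping in part (1) — verifying that the rejection/relaunch mechanism really does realize the conditional law and not some subtly different object, in particular that discarding a replica when the \emph{reference} process exits does not bias the surviving replicas' distribution (it does not, because that event is independent of each replica's own driving noise and initial draws).
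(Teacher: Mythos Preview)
Your proposal is correct and follows the paper's approach essentially line for line: part (1) by independence of the driving Brownian motions and the rejection-sampling interpretation, part (2) as a direct application of Theorem \ref{t:decorr_err} with $\mu_0 = \mu_\phase^0$, and part (3) via the geometric distribution with success probability $p = \prob^{\mu_\phase^0}[T > \tphase] > 0$ and mean number of relaunches $(1-p)/p$. The only nuance is that the paper proves $p > 0$ for \emph{every} $\tphase > 0$ (its Lemma \ref{l:tlaunch}) by combining the large-time lower bound with the monotonicity $\prob[T > t_1] \geq \prob[T > t_2]$ for $t_1 \leq t_2$, whereas your direct appeal to \eqref{e:denom_bound} covers only $\tphase \geq \underline{t}_\phase$; the monotonicity patch is immediate.
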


To prove Theorem \ref{t:dephasing}, we must establish:
\begin{enumerate}
\item The replicas are independent and have law $\mu_\phase$;
\item The error of $\mu_\phase$ can be made small;
\item The expected number of relaunches is finite.
\end{enumerate}

The first property is obvious as each of the replicas is driven by an
independent Brownian motion, and we only retain realizations for
which $T> \tphase$.  The second property follows from
Theorem \ref{t:decorr_err}.

To prove the third property, we must establish that replicas initiated
from $\mu_\phase^0$ have a nonzero chance of surviving till $\tphase$:
\begin{lem}
  \label{l:tlaunch}
  Assume that $\mu_{\phase}^0$ satisfies the hypotheses of Theorem
  \ref{t:dephasing},
  \begin{equation*}
    \prob^{\mu_{\phase}^0}\bracket{\Texit^k \geq
      \tphase}  \equiv p > 0.
  \end{equation*}
\end{lem}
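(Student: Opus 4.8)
The plan is to show that the survival probability $\prob^{\mu_\phase^0}[T^k \geq \tphase]$ is strictly positive by expressing it via the series expansion already developed in the preliminary section and in the proof of Theorem~\ref{t:decorr_err}. Writing $p^k_0 \equiv \prob^{\mu_\phase^0}[T^k \geq \tphase] = \prob^{\mu_\phase^0}[T \geq \tphase]$ (the replicas being i.i.d.\ copies driven by independent Brownian motions), we apply Feynman--Kac \eqref{e:fk} with $\phi = 0$ and $v_0 = 1$ to get $\prob^{x}[T > \tphase] = \bar v(\tphase, x) = \sum_{k=1}^\infty e^{-\lambda_k \tphase} \hat 1_k u_k(x)$, and then integrate against $\mu_\phase^0$ to obtain
\[
\prob^{\mu_\phase^0}[T > \tphase] = \sum_{k=1}^\infty e^{-\lambda_k \tphase} \hat 1_k \hat\mu_{0,k},
\]
in the notation of Section~\ref{s:decorr_proof} (with $\mu_0$ replaced by $\mu_\phase^0$).

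The key point is then exactly the lower bound on the denominator already proved in \eqref{e:denom_bound}: taking $\tphase \geq \underline t_\phase$, the same splitting into the $k=1$ term plus a tail estimated by Proposition~\ref{p:sum_bound2} gives
\[
\sum_{k=1}^\infty e^{-\lambda_k \tphase} \hat 1_k \hat\mu_{0,k} \geq \tfrac{1}{2}\, \hat 1_1 \hat\mu_{0,1} = \tfrac{1}{2} \paren{\int_W u_1 \, d\mu} \paren{\int_W u_1 \, d\mu_\phase^0} > 0.
\]
Here I use that $u_1 > 0$ on $W$ (the ground state is positive), so $\int_W u_1\, d\mu > 0$; and that $\mu_\phase^0$ is admissible with $\supp \mu_\phase^0 \subset W$, so $\hat\mu_{0,1} = \int_W u_1\, d\mu_\phase^0 > 0$ as well, since $u_1$ is strictly positive on the support of a nonzero measure supported in the open set $W$. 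This last positivity is the only genuinely new ingredient: one must rule out $\hat\mu_{0,1} = 0$, which cannot happen because $u_1$ is continuous and strictly positive on $W$ and $\mu_\phase^0$ is a probability measure on $W$. Setting $p \equiv \tfrac12 \hat 1_1 \hat\mu_{0,1} > 0$ completes the argument.

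I do not expect any serious obstacle: the estimate is essentially a restatement of \eqref{e:denom_bound} with $\mu_0$ replaced by $\mu_\phase^0$, and the only thing to check carefully is that $\int_W u_1 \, d\mu_\phase^0$ is genuinely positive, not merely nonnegative. If one wants to avoid invoking $\underline t_\phase$ one can argue more directly: for \emph{any} fixed $\tphase > 0$ the series $\sum_k e^{-\lambda_k \tphase}\hat 1_k\hat\mu_{0,k}$ equals $\prob^{\mu_\phase^0}[T > \tphase]$, which is the integral over $W$ of the Fokker--Planck density $p^{\mu_\phase^0}(\tphase, \cdot)$; strong positivity of the parabolic (Dirichlet heat) semigroup on the connected domain $W$ forces this integral to be strictly positive whenever $\mu_\phase^0$ is a nonzero measure supported in $W$. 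Either route yields $p > 0$, and since the $N$ replicas are independent with this survival probability, the number of relaunches of each replica is geometrically distributed with success probability $p$, hence has finite expectation $1/p$ — which is the third claim of Theorem~\ref{t:dephasing} that this lemma is used to establish.
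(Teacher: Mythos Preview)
Your argument is correct and shares the same core estimate with the paper: expand $\prob^{\mu_\phase^0}[T>\tphase]$ via Feynman--Kac as $\sum_k e^{-\lambda_k \tphase}\hat 1_k \hat\mu_{0,k}$, isolate the $k=1$ term, and control the tail by Proposition~\ref{p:sum_bound2} to get the lower bound $\tfrac12 \hat 1_1\hat\mu_{0,1}>0$.

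The one genuine difference is how small $\tphase$ is handled. Your main route assumes $\tphase\geq \underline t_\phase$, which is fine for the application to Theorem~\ref{t:dephasing} but is not literally a hypothesis of the lemma (whose only assumption is admissibility of $\mu_\phase^0$). The paper closes this gap with a short monotonicity-plus-contradiction trick: if the survival probability vanished at some $t_1$, then by $t\mapsto \prob^{\mu_\phase^0}[T\geq t]$ being nonincreasing it would vanish for all $t_2\geq t_1$; but for $t_2$ large enough the same series lower bound forces positivity, a contradiction. This avoids any restriction on $\tphase$ while staying entirely within the spectral framework already built. Your alternative route via strong positivity of the Dirichlet heat semigroup is also valid and arguably cleaner, but it imports an outside PDE fact rather than reusing the paper's own estimates.
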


\begin{proof}
  Observe that we have the following monotonicity property for $t_2 >
  t_1$,
  \[
  0\leq \prob^{\mu_{\phase}^0}\bracket{\Texit \geq t_2} \leq
  \prob^{\mu_{\phase}^0}\bracket{\Texit \geq t_1}.
  \]

  We now argue by contradiction. Assume that at some $t_1>0$,
  $\prob^{\mu_{\phase}^0}\bracket{\Texit \geq t_1}=0$.  By the above
  monotonicity, $\prob^{\mu_{\phase}^0}\bracket{\Texit \geq t_2}=0$
  for all $t_2\geq t_1$.  Using a similar approach as in the proof of
  Theorem \ref{t:decorr_err}, we write
  \[
  \prob^{\mu_{\phase}^0}\bracket{\Texit \geq t} = \bar v(x,t) =
  \sum_{k=1}^\infty e^{-\lambda_kt} \int u_k d{\mu_{\phase}^0} \int
  u_k d\mu
  \]
  where $\bar v$ solves \eqref{e:fk_parabolic} with $v_0 =1$ and $\phi
  = 0$.  Therefore,
  \begin{equation*}
    \begin{split}
      \bar v(x,t_2) &= \sum_{k=1}^\infty e^{-\lambda_kt_2}
      \hat{\mu}_{\phase, k}^0 \int u_k d\mu = \sum_{k=1}^\infty e^{-\lambda_kt_2} \hat{\mu}_{\phase, k}^0 \hat{1}_k\\
      &\geq e^{-\lambda_1 t_2} \set{\hat{\mu}_{\phase, 1}^0 \hat{1}_1
        - e^{-(\lambda_2-\lambda_1)t_2}\sum_{k=2}^\infty e^{-\kappa
          \lambda_k t_2} \abs{\hat{1}_k} \abs{\hat{\mu}_{\phase, k}^0 }}\\
      & \gtrsim e^{-\lambda_1 t_2}\set{\hat{\mu}_{\phase, 1}^0
        \hat{1}_1 - e^{-(\lambda_2-\lambda_1)t_2} t_2^{-n/4-s/2}
        \norm{P_{[2, \infty)}{\mu}_{\phase}^0 }_{H^{-s}_\mu}}
    \end{split}
  \end{equation*}
  Then taking $t_2$ sufficiently large,
  \begin{equation*}
    \begin{split}
      &\hat{\mu}_{\phase, 1}^0 \hat{1}_1 -
      e^{-(\lambda_2-\lambda_1)t_2} t_2^{-n/4-s/2}
      \norm{P_{[2,\infty)}\delta_x}_{H^{-s}_\mu}\\
      &\geq \frac{1}{2}\hat{\mu}_{\phase, 1}^0\hat{1}_1 =
      \frac{1}{2}\int u_1d\mu_{\phase}^0 \int u_1 d\mu >0,
    \end{split}
  \end{equation*}
  since $\int u_1d\mu_{\phase}^0 >0$. Thus,  we have a contradiction.
\end{proof}
This calculation reveals a role played by the choice of
$\mu_{\phase}^0$. If concentrated near the well boundary,
${\prob^{\mu_{\phase}^0}\bracket{\Texit^k \geq \tphase}=p}$ could be
quite small.  This will induce the replicas to relaunch many times, as
the next result shows.  Thus, for computational efficiency, a
distribution concentrated deep in the well's interior is desirable.

%%%%%%%%%%%%%%%%%%%%%%%%%%%%%%%%%%%%%%%%%%%%%%%%%%
\begin{lem}
  Assume that $\mu_{\phase}^0$ satisfies the hypotheses of Theorem
  \ref{t:dephasing}, and that $
  \prob^{\mu_{\phase}^0}\bracket{\Texit^k \geq \tphase}=p>0$.  Then
  \begin{equation*}
    \E^{\mu_{\phase}^0}\bracket{\text{Number of relaunches}} =   (1-p)/p < \infty
  \end{equation*}
\end{lem}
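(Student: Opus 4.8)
The plan is to recognize the successive launches of a single replica as a sequence of independent Bernoulli trials and then apply the elementary formula for the mean of a geometric random variable. First I would fix the trial structure: one attempt consists of drawing a starting position from $\mu_\phase^0$, evolving \eqref{e:odlang} on the interval $[0,\tphase]$, and declaring the attempt a \emph{success} if $\Texit \geq \tphase$ and a \emph{failure} otherwise. By Lemma \ref{l:tlaunch}, a single attempt succeeds with probability exactly $p = \prob^{\mu_\phase^0}\bracket{\Texit^k \geq \tphase} > 0$.

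Next I would argue that the attempts are i.i.d.: each relaunch starts from an independent sample of $\mu_\phase^0$ and is driven by an independent Brownian motion, so the outcome of the $j$-th attempt is independent of the previous ones and has the same law. Hence the outcomes form a Bernoulli$(p)$ sequence. Let $K$ be the index of the first successful attempt; then $K$ is geometrically distributed, $\prob^{\mu_\phase^0}\bracket{K = k} = (1-p)^{k-1}p$ for $k \geq 1$, and since $p>0$ we have $K<\infty$ almost surely, which already establishes that the dephasing step terminates.

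Finally, the number of relaunches is $R = K-1$, since the initial launch is not counted as a relaunch, so
\[
\E^{\mu_{\phase}^0}\bracket{\text{Number of relaunches}} = \E^{\mu_{\phase}^0}\bracket{K} - 1 = \frac{1}{p} - 1 = \frac{1-p}{p},
\]
using $\sum_{k\geq 1} k(1-p)^{k-1}p = 1/p$, which is finite because $p>0$. (If one accounts for the fact that the dephasing process is also halted should the reference walker exit, this only removes attempts, so $(1-p)/p$ remains an upper bound; the stated equality is for the idealized run conditioned on the reference surviving through dephasing.) There is no genuine obstacle here: the only nontrivial inputs are the positivity of $p$, supplied by Lemma \ref{l:tlaunch}, and the independence of successive relaunches, which is built into the construction of the algorithm.
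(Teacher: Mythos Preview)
Your proposal is correct and follows essentially the same approach as the paper: the paper directly computes $\prob^{\mu_\phase^0}[\text{$m$ relaunches}] = (1-p)^m p$ and sums $\sum_{m\geq 0} m(1-p)^m p = (1-p)/p$, which is exactly your geometric-variable argument with $R=K-1$. Your added remark about the reference walker halting dephasing is a nice clarification not present in the paper's proof.
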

\begin{proof}
  The probability of relaunching $m$ times is the probability of
  exiting $m$ times and surviving on the $m+1$-th time.  Interpreting
  this in terms of $\Texit^k$ and using the assumption, $
  \prob^{\mu_{\phase}^0}[\text{$m$ relaunches}] = (1-p)^{m} p$.  Thus,
  \begin{equation*}
    \begin{split}
      \E^{\mu_{\phase}^0}\bracket{\text{Number of relaunches}}& = \sum_{m=0}^\infty m \cdot \prob^{\mu_{\phase}^0}[\text{$m$ relaunches}]  \\
      & = \sum_{m=0}^\infty m (1-p)^m p= \frac{1-p}{p}<\infty.
    \end{split}
  \end{equation*}
\end{proof}
%%%%%%%%%%%%%%%%%%%%%%%%%%%%%%%%%%%%%%%%%%%%%%%%%%

%%%%%%%%%%%%%%%%%%%%%%%%%%%%%%%%%%%%%%%%%%%%%%%%%%

\section{The Parallel Step -- Proofs of Theorems \ref{t:par_err_soft}
  and \ref{t:unified_soft}}
\label{s:par}
First, we restate Theorem \ref{t:par_err_soft} with additional detail:
\begin{thm}[Parallel Error]
  \label{t:par_err}
  Given $\tphase\geq \underline{t}_{\phase}$, let
  \[
  \eps_\phase \equiv C_\phase e^{-(\lambda_2 - \lambda_1)\tphase},
  \]
  and assume the dephasing step has produced $N$ i.i.d. replicas drawn
  from distribution $\mu_\phase$.  Then the exit time distribution of
  the parallel step converges to an exponential,
  \begin{equation}
    \label{e:par_Texit_err}
    \abs{\prob^{\mu_\phase}\bracket{T^\star > t} - e^{-N \lambda_1 t} }
    \leq \eps_\phase  N (1 + \eps_\phase)^{N-1} e^{-N\lambda_1 t}.
  \end{equation}

  If $\phi: \partial W \to \R$ is bounded and measurable, the exit
  distribution converges to one that is independent of exit time,
  \begin{equation}
    \label{e:par_exit_err}
    \begin{split}
      &\abs{\E^{\mu_\phase}\bracket{1_{T^\star >t}
          \phi(X_{T^\star}^\star)}
        - e^{-N \lambda_1 t} \int_{\partial W} \phi d\rho}\\
      &\quad \lesssim N^2 (1 + \eps_\phase)^{N-1}\eps_\phase
      \norm{\phi}_{L^\infty} e^{-N\lambda_1 t}.
    \end{split}
  \end{equation}

  If, in addition, $N \eps_\phase(1+\eps_\phase)^{N-1} < 1$,  then
  \begin{equation}
    \label{e:par_exit_indep_err}
    \begin{split}
      & \abs{\E^{\mu_\phase}\bracket{\phi(X_{T^\star}^\star)
          \mid{T^\star >t}}
        - \int_{\partial W} \phi d\rho}\\
      &\quad\lesssim \frac{N^2 \norm{\phi}_{L^\infty} \eps_\phase (1 +
        \eps_\phase)^{N-1}}{1 - N \eps_\phase(1+\eps_\phase)^{N-1}}.
    \end{split}
  \end{equation}
\end{thm}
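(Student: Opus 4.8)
The strategy is to reduce each assertion to a statement about a single replica and then exploit the independence of the $N$ replicas. Since $\mu_\phase$ is, by construction, the law of one replica at time $\tphase$ conditioned on not having exited, Theorem~\ref{t:dephasing}(2) supplies
\[
  \abs{\E^{\mu_\phase}\bracket{f(T,X_T)}-\E^{\nu}\bracket{f(T,X_T)}}\leq \norm{f}_{L^\infty}\eps_\phase
\]
for every bounded measurable $f:\R^+\times\partial W\to\R$, and Theorem~\ref{t:single_exit_est} (applied with $t_0=\tphase$) refines this for the survival function itself: there is $\delta$ with $\abs{\delta(s)}\leq\eps_\phase$ such that $G(s)\equiv\prob^{\mu_\phase}\bracket{T>s}=e^{-\lambda_1 s}(1+\delta(s))$. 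Throughout I will use the elementary bound $\abs{(1+\delta(s))^m-1}\leq (1+\eps_\phase)^m-1\leq m\,\eps_\phase(1+\eps_\phase)^{m-1}$.

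The exit-time estimate \eqref{e:par_Texit_err} is then immediate: $T^\star=\min_k T^k$ and independence give $\prob^{\mu_\phase}\bracket{T^\star>t}=G(t)^N=e^{-N\lambda_1 t}(1+\delta(t))^N$, so the left side equals $e^{-N\lambda_1 t}\abs{(1+\delta(t))^N-1}$ and the displayed inequality with $m=N$ finishes it. For the hitting point I would first establish the exchangeability identity
\[
  \E^{\mu_\phase}\bracket{1_{T^\star>t}\,\phi(X^\star_{T^\star})}=N\,\E^{\mu_\phase}\bracket{1_{T>t}\,\phi(X_T)\,G(T)^{N-1}},
\]
obtained by decomposing over which replica exits first (ties occur with probability zero, since the one-replica exit time has no atoms), conditioning on the winning replica's exit time $T^k$, and replacing each of the other replicas' survival indicators $1_{T^j>T^k}$ by $\prob^{\mu_\phase}\bracket{T>T^k}=G(T^k)$. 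On the QSD side, Proposition~\ref{p:qsdproperties} --- independence of $T$ and $X_T$ under $\nu$ together with $T\sim\mathrm{Exp}(\lambda_1)$ --- yields $e^{-N\lambda_1 t}\int_{\partial W}\phi\,d\rho=N\,\E^{\nu}\bracket{1_{T>t}\,\phi(X_T)\,e^{-(N-1)\lambda_1 T}}$.

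To prove \eqref{e:par_exit_err} I would telescope the difference of these two expressions through the intermediate quantity $N\,\E^{\mu_\phase}\bracket{1_{T>t}\phi(X_T)e^{-(N-1)\lambda_1 T}}$. For the first piece I replace $G(T)^{N-1}$ by $e^{-(N-1)\lambda_1 T}$, using $\abs{G(s)^{N-1}-e^{-(N-1)\lambda_1 s}}=e^{-(N-1)\lambda_1 s}\abs{(1+\delta(s))^{N-1}-1}\leq e^{-(N-1)\lambda_1 s}(N-1)\eps_\phase(1+\eps_\phase)^{N-2}$ together with the crude bound $\E^{\mu_\phase}\bracket{1_{T>t}e^{-(N-1)\lambda_1 T}}\leq e^{-(N-1)\lambda_1 t}\prob^{\mu_\phase}\bracket{T>t}\leq(1+\eps_\phase)e^{-N\lambda_1 t}$; this produces a term $\lesssim N^2\eps_\phase(1+\eps_\phase)^{N-1}\norm{\phi}_{L^\infty}e^{-N\lambda_1 t}$. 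For the second piece, which is a comparison of $\E^{\mu_\phase}$ and $\E^{\nu}$ against the bounded observable $1_{\tau>t}\phi(\xi)e^{-(N-1)\lambda_1\tau}$, I would use Lemma~\ref{e:advancetime} to peel off the $1_{T>t}$ factor, exposing $e^{-(N-1)\lambda_1 t}\prob^{\mu_\phase}\bracket{T>t}$ multiplied by an expectation taken in the further-advanced law $\mu_{\tphase+t}$, which Theorems~\ref{t:decorr_err} and \ref{t:single_exit_est} pin to its QSD value up to $\eps_\phase$; this gives a term $\lesssim N\norm{\phi}_{L^\infty}\eps_\phase e^{-N\lambda_1 t}$. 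Adding the two pieces yields \eqref{e:par_exit_err}, and in particular makes quantitative the exactness of the parallel step under QSD initialization noted in Proposition~5 of \cite{Bris:2011p13365}.

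Finally, \eqref{e:par_exit_indep_err} follows by dividing: the numerator $\E^{\mu_\phase}\bracket{1_{T^\star>t}\phi(X^\star_{T^\star})}$ equals $e^{-N\lambda_1 t}\bigl(\int_{\partial W}\phi\,d\rho+r_1\bigr)$ with $\abs{r_1}\lesssim N^2\eps_\phase(1+\eps_\phase)^{N-1}\norm{\phi}_{L^\infty}$ by \eqref{e:par_exit_err}, while the denominator $\prob^{\mu_\phase}\bracket{T^\star>t}$ equals $e^{-N\lambda_1 t}(1+r_2)$ with $\abs{r_2}\leq N\eps_\phase(1+\eps_\phase)^{N-1}<1$ by \eqref{e:par_Texit_err} and the extra hypothesis; the factor $e^{-N\lambda_1 t}$ cancels, and the estimate $\bigl|\tfrac{a+r_1}{1+r_2}-a\bigr|\leq\tfrac{\abs{r_1}+\abs{a}\abs{r_2}}{1-\abs{r_2}}$, with $\abs{a}=\abs{\int_{\partial W}\phi\,d\rho}\leq\norm{\phi}_{L^\infty}$, gives the claimed bound. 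The main obstacle is the middle step: both the binomial expansion of $(1+\delta)^{N-1}$ coming from the product of the other replicas' survival probabilities and the sum over the $N$ possible first-exiters contribute powers of $N$, so the telescoping must be organized so that the $1_{T>t}$ indicator is converted, via Lemma~\ref{e:advancetime}, into the genuine decay $e^{-N\lambda_1 t}$ rather than being absorbed into a crude $L^\infty$ bound --- this sharpness in $t$ is exactly what legitimizes the division leading to \eqref{e:par_exit_indep_err}.
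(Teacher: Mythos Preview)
Your argument is correct and uses the same ingredients as the paper --- Theorem~\ref{t:single_exit_est} for the survival function, Theorem~\ref{t:decorr_err} for the observable comparison, Lemma~\ref{e:advancetime} to extract the $e^{-N\lambda_1 t}$ decay, and the binomial bound on $(1+\delta)^m-1$ --- but you organize the telescoping for \eqref{e:par_exit_err} differently. The paper first applies Lemma~\ref{e:advancetime} at the level of the full $N$-replica system to factor $\E^{\mu_\phase}[1_{T^\star>t}\phi(X^\star_{T^\star})]=\E^{\mu_{\tphase+t}}[\phi(X^\star_{T^\star})]\,\prob^{\mu_\phase}[T^\star>t]$, and only then decomposes the advanced expectation over the winning index via $\calP^{\mu_{\tphase+t}}(T^k)^{N-1}$, telescoping through $\E^{\nu}[\phi(X^k_T)\calP^{\mu_{\tphase+t}}(T^k)^{N-1}]$ (swap the outer law first). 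You instead reduce immediately by exchangeability to the single-replica quantity $N\,\E^{\mu_\phase}[1_{T>t}\phi(X_T)G(T)^{N-1}]$, telescope through $N\,\E^{\mu_\phase}[1_{T>t}\phi(X_T)e^{-(N-1)\lambda_1 T}]$ (swap the inner survival function first), and apply Lemma~\ref{e:advancetime} only to a single replica. Your route is a bit more economical in that it never handles the $N$-fold advanced law $\mu_{\tphase+t}^{\otimes N}$ explicitly; the paper's route has the conceptual payoff of isolating the factorization $\E^{\mu_{\tphase+t}}[\phi]\,\prob^{\mu_\phase}[T^\star>t]$, which makes the asymptotic independence of hitting point and exit time visible before any estimates. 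The bounds produced are identical.
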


\begin{proof}
  To prove \eqref{e:par_Texit_err}, we begin by writing,
  \begin{equation*}
    \begin{split}
      &\abs{\prob^{\mu_{\phase}}\bracket{ T^\star> t} -
        e^{-N\lambda_1 t}}\\
      &\quad = \abs{\Pi_{k=1}^N \prob^{\mu_{\phase}}\bracket{T^k> t}
        -\Pi_{k=1}^N \prob^{\nu}\bracket{T^k> t}  } \\
      &\quad = \abs{\prob^{\mu_{\phase}}\bracket{T^1> t}^{N}
        -e^{-N\lambda_1 t} }\\
      &\quad= \abs{\prob^{\mu_{\phase}}\bracket{T^1> t} -e^{-\lambda_1
          t} } \abs{\sum_{k=0}^{N-1} \prob^{\mu_{\phase}}\bracket{T^1>
          t}^{k}e^{-(N-1-k)\lambda_1 t} }.
    \end{split}
  \end{equation*}
  From Theorem \ref{t:single_exit_est}, we know
  \begin{equation*}
    \abs{\prob^{\mu_{\phase}}\bracket{T> t}
      -e^{-\lambda_1 t} }\leq \eps_{\phase}e^{-\lambda_1 t}.
  \end{equation*}
  Therefore,
  \begin{equation*}
      \abs{\prob^{\mu_{\phase}}\bracket{ T^\star\geq t} -
        e^{-N\lambda_1 t}}\leq \eps_\phase e^{-\lambda_1 t} N (
      1+\eps_\phase)^{N-1}
      e^{-(N-1)\lambda_1 t}.
  \end{equation*}

  To prove \eqref{e:par_exit_err}, we begin by writing the expectation
  as
  \begin{equation*}
    \begin{split}
      \E^{\mu_{\phase}} \bracket{1_{T^\star> t} \phi(X_{T^\star}^\star
        ) } &= \E^{\mu_{\phase}} \bracket{1_{T^{k_\star}> t}
        \phi(X_{T^{k_\star}}^{k_\star}) }\\
      & = \sum_{k=1}^N \E^{\mu_{\phase}} \bracket{1_{T^k> t}
        \phi(X_{T^{k}}^k) 1_{k = k_\star} }\\
      & = \sum_{k=1}^N\E^{\mu_{\phase}}\bracket{1_{T^k> t}
        \phi(X_{T^{k}}^k) \Pi_{l \neq k} 1_{T^l > T^k}1_{T^l>t} }.
    \end{split}
  \end{equation*}
  In the above expression, we have used that since $T^\star>t$, $T^l>t$
  for each $l$.  Then, using Lemma \ref{e:advancetime} on each of the
  processes,
  \begin{equation*}
    \begin{split}
      &\E^{\mu_{\phase}}\bracket{1_{T^k> t}
        \phi(X_{T^{k}}^k) \Pi_{l \neq k} 1_{T^l > T^k}1_{T^l>t} }\\
      & = \E^{\mu_{\tphase+t}}\bracket{ \phi(X_{T^{k}}^k) \Pi_{l \neq
          k}
        1_{T^l> T^k}}\Pi_{l=1}^N\prob^{\mu_\phase}[T^l>t]\\
      & = \E^{\mu_{\tphase+t}}\bracket{ \phi(X_{T^{k}}^k) 1_{k =
          k_\star}}\prob^{\mu_\phase}[T>t]^N.
    \end{split}
  \end{equation*}
  This leads to the expression
  \begin{equation}
    \E^{\mu_{\phase}} \bracket{1_{T^\star> t} \phi(X_{T^\star}^\star
      ) }  =  \E^{\mu_{t_\phase+t}} \bracket{\phi(X_{T^\star}^\star
      ) }\prob^{\mu_\phase}[T^\star>t].
  \end{equation}
  Comparing against the QSD,
  \begin{equation}
    \label{e:parhittingptdiff}
    \begin{split}
      &\abs{ \E^{\mu_{\phase}} \bracket{1_{T^\star> t}
          \phi(X_{T^\star}^\star ) } - \E^{\nu} \bracket{1_{T^\star>
            t} \phi(X_{T^\star}^\star
          ) } }\\
      &\leq \abs{ \E^{\mu_{t_\phase+t}} \bracket{
          \phi(X_{T^\star}^\star
          ) }  }\abs{\prob^{\mu_\phase}[T^\star>t]- e^{-N\lambda_1 t}}\\
      & \quad + e^{-N\lambda_1 t} \abs{ \E^{\mu_{t_\phase+t}}
        \bracket{ \phi(X_{T^\star}^\star ) }- \E^{\nu} \bracket{
          \phi(X_{T^\star}^\star ) } }.
    \end{split}
  \end{equation}
The first difference can be treated by \eqref{e:par_Texit_err}, but
the second difference requires more care.

  Given an arbitrary distribution $\eta$ for $X_0$, we define
  \begin{equation}
    \calP^{\eta}(t) \equiv \prob^{\eta}[T>
    t]=\prob^{\eta}[T^k > t],\quad k = 1\ldots N.
  \end{equation}
  Consequently, $\calP^{\nu}(t) = e^{-\lambda_1 t}$ and
  \begin{equation}
    \begin{split}
      \E^{\mu_{t_\phase+t}} \bracket{ \phi(X_{T^\star}^\star ) } &=
      \sum_{k=1}^N \E^{\mu_{t_\phase+t}} \bracket{
        \phi(X_{T}^k)\Pi_{l\neq k} 1_{T_l>T_k}}\\
      & = \sum_{k=1}^N \E^{\mu_{t_\phase+t}} \bracket{ \phi(X_{T}^k)
        \calP^{\mu_{t_\phase+t}}(T^k)^{N-1} }.
    \end{split}
  \end{equation}
  An analogous expansion can be made with $\nu$ in place of
  ${\mu_{t_\phase+t}}$.  Taking the difference of the two sums, and
  comparing term by term,
  \begin{equation}
    \label{e:summand_diff}
    \begin{split}
      &\abs{\E^{\mu_{t_\phase+t}} \bracket{ \phi(X_{T}^k)
          \calP^{\mu_{t_\phase+t}}(T^k)^{N-1} } - \E^\nu \bracket{
          \phi(X_{T}^k) \calP^\nu(T^k)^{N-1}  }}\\
      & \leq \abs{\E^{\mu_{t_\phase+t}} \bracket{ \phi(X_{T}^k)
          \calP^{\mu_{t_\phase+t}}(T^k)^{N-1} }- \E^\nu \bracket{
          \phi(X_{T}^k) \calP^{\mu_{t_\phase+t}}(T^k)^{N-1}  }}\\
      & \quad + \abs{\E^\nu \bracket{ \phi(X_{T}^k)
          \calP^{\mu_{t_\phase+t}}(T^k)^{N-1} }-\E^\nu \bracket{
          \phi(X_{T}^k) \calP^\nu(T^k)^{N-1} }}.
    \end{split}
  \end{equation}
  By Theorem \ref{t:decorr_err} the first difference in
  \eqref{e:summand_diff} is bounded by
  \begin{equation*}
    \begin{split}
      &\abs{\E^{\mu_{t_\phase+t}} \bracket{ \phi(X_{T}^k)
          \calP^{\mu_{t_\phase+t}}(T^k)^{N-1} }- \E^\nu \bracket{
          \phi(X_{T}^k) \calP^{\mu_{t_\phase+t}}(T^k)^{N-1}  }}\\
      &\quad \leq \eps_\phase e^{-(\lambda_2-\lambda_1)t}
      \norm{\phi}_{L^\infty},
    \end{split}
  \end{equation*}
  since $\mathcal{P}\leq 1$.

  For the other difference in \eqref{e:summand_diff}, we can replicate
  the proof of \eqref{e:par_Texit_err} to obtain, for any $\tau\geq
  0$,
  \begin{equation*}
    \begin{split}
      &\abs{\calP^{\mu_{t_\phase+t}}(\tau)^{N-1} - \calP^\nu(\tau)^{N-1} }\\
      & \leq (N-1)\eps_\phase e^{-(\lambda_2-\lambda_1)t}
      (1+\eps_\phase
      e^{-(\lambda_2-\lambda_1)t})^{N-2}e^{-(N-1)\lambda_1 \tau}\\
      &\leq (N-1)\eps_\phase (1+\eps_\phase)^{N-2}.
    \end{split}
  \end{equation*}
  Therefore,
  \begin{equation*}
    \begin{split}
      &\abs{\E^\nu \bracket{ \phi(X_{T}^k)
          \calP^{\mu_{t_\phase+t}}(T^k)^{N-1} }-\E^\nu \bracket{
          \phi(X_{T}^k) \calP^\nu(T^k)^{N-1}  }}\\
      & \leq \norm{\phi}_{L^\infty} \eps_\phase (N-1) (1+
      \eps_\phase)^{N-2}.
    \end{split}
  \end{equation*}
  So \eqref{e:summand_diff} can be bounded by
  \begin{equation*}
    \begin{split}
      &\abs{\E^{\mu_{t_\phase+t}} \bracket{ \phi(X_{T}^k)
          \calP^{\mu_{t_\phase+t}}(T^k)^{N-1} } - \E^\nu \bracket{
          \phi(X_{T}^k) \calP^\nu(T^k)^{N-1}  }}\\
      &\leq \norm{\phi}_{L^\infty} \eps_\phase \bracket{1 + (N-1) (1+
        \eps_\phase)^{N-2}}.
    \end{split}
  \end{equation*}

  Returning to \eqref{e:parhittingptdiff}, using
  \eqref{e:par_Texit_err} to treat the first difference and the
  preceding calculation to treat the second, we have:

  \begin{equation}
    \begin{split}
      &\abs{ \E^{\mu_{\phase}} \bracket{1_{T^\star> t}
          \phi(X_{T^\star}^\star ) } - \E^{\nu} \bracket{1_{T^\star>
            t} \phi(X_{T^\star}^\star
          ) } }\\
      & \leq N\norm{\phi}_{L^\infty}\eps_\phase e^{-N\lambda_1 t} (1 +
      \eps_\phase)^{N-1} \\
      &\quad+N\norm{\phi}_{L^\infty} \eps_\phase  e^{-N\lambda_1 t} \bracket{1 +  (N-1) (1+ \eps_\phase)^{N-2}}\\
      & \lesssim N^2 \norm{\phi}_{L^\infty} \eps_\phase e^{-N\lambda_1
        t} (1 + \eps_\phase)^{N-1}.
    \end{split}
  \end{equation}

  Finally, to prove \eqref{e:par_exit_indep_err},

  \begin{equation*}
    \begin{split}
      &\abs{ \E^{\mu_{\phase}} \bracket{\phi(X_{T^\star}^\star ) \mid
          {T^\star> t} } - \E^{\nu} \bracket{\phi(X_{T^\star}^\star
          ) \mid {T^\star> t} } }\\
      &=\abs{\frac{\E^{\mu_{\phase}} \bracket{\phi(X_{T^\star}^\star )
            1_{T^\star> t} }}{\prob^{\mu_\phase}\bracket{T^\star> t} }
        -\frac{\E^{\nu} \bracket{\phi(X_{T^\star}^\star
            ) 1_{T^\star> t} }}{\prob^\nu\bracket{T^\star> t}  }  }\\
      &\leq \abs{\E^{\mu_{\phase}} \bracket{\phi(X_{T^\star}^\star )
          1_{T^\star> t} }-\E^\nu \bracket{\phi(X_{T^\star}^\star )
          1_{T^\star> t}
        }}\frac{1}{{\prob^{\mu_\phase}\bracket{T^\star> t}  }} \\
      &\quad + \abs{{\E^{\nu} \bracket{\phi(X_{T^\star}^\star )
            1_{T^\star> t} }}}
      \frac{\abs{\prob^{\mu_\phase}\bracket{T^\star> t} -
          \prob^\nu\bracket{T^\star> t}
        }}{\prob^{\mu_\phase}\bracket{T^\star>
          t}\prob^\nu\bracket{T^\star> t} }.
    \end{split}
  \end{equation*}
  For the first difference,
  \begin{equation*}
    \begin{split}
      &\abs{\E^{\mu_{\phase}} \bracket{\phi(X_{T^\star}^\star )
          1_{T^\star> t} }-\E^\nu \bracket{\phi(X_{T^\star}^\star )
          1_{T^\star> t}
        }}\frac{1}{{\prob^{\mu_\phase}\bracket{T^\star> t}  }}\\
      & \lesssim \frac{N^2 \norm{\phi}_{L^\infty} \eps_\phase (1 +
        \eps_\phase)^{N-1}}{1 - N \eps_\phase(1+\eps_\phase)^{N-1}}.
    \end{split}
  \end{equation*}
  For the second difference,
  \begin{equation*}
    \begin{split}
      &\abs{{\E^{\nu} \bracket{\phi(X_{T^\star}^\star ) 1_{T^\star> t}
          }}} \frac{\abs{\prob^{\mu_\phase}\bracket{T^\star> t} -
          \prob^\nu\bracket{T^\star> t}
        }}{\prob^{\mu_\phase}\bracket{T^\star>
          t}\prob^\nu\bracket{T^\star> t}  }\\
      &\leq \frac{N\norm{\phi}_{L^\infty}
        \eps_\phase(1+\eps_\phase)^{N-1}}{1 - N
        \eps_\phase(1+\eps_\phase)^{N-1}}.
    \end{split}
  \end{equation*}
  Combining these estimates, we have our result.

\end{proof}

Lastly, we prove Theorem \ref{t:unified_soft}, which we first restate
with additional detail:
\begin{thm}[ParRep Error]
  Let $X^{\rm s}_t$ denote the unaccelerated (serial) process and
  $X^{\rm p}_t$ denote the ParRep process, and assume that both $X^\refe_t$ and $X^{\rm s}_t$ are initially distributed
  under $\mu_0$, an admissible distribution.  Also assume that
  $\mu_\phase^0$ is admissible.

  Given $\tcorr \geq \underline{t}_\corr$ and $\tphase \geq
  \underline{t}_{\phase}$, let
  \begin{align*}
    \eps_\corr & =C_\corr e^{-(\lambda_2 - \lambda_1)\tcorr},\\
    \eps_\phase & = C_\phase e^{-(\lambda_2 - \lambda_1)\tphase}.
  \end{align*}

  Letting $T^{\rm s}$ and $T^{\rm p}$ denote the physical times, we
  have
  \begin{equation}
    \label{e:unified_Pt_err}
    \begin{split}
      &\abs{\prob^{\mu_0}\bracket{{ T^{\rm s}>t}}-
        \prob^{\mu_0}\bracket{{T^{\rm p}>t}}}\\
      &\quad \leq {\eps_\corr e^{-\lambda_1 t} +
        \eps_\phase N (1 +\eps_\phase)^{N-1}e^{-\lambda_1(t -
          \tcorr)_+}},
    \end{split}
  \end{equation}

  \begin{equation}
    \label{e:unified_E_err}
    \begin{split}
      &\abs{\E^{\mu_0}\bracket{\phi(X_{T^{\rm s}}^{\rm s})1_{ T^{\rm
              s}>t}}-
        \E^{\mu_0}\bracket{\phi(X_{T^{\rm p}}^{\rm p}) 1_{T^{\rm p}>t}}}\\
      &\quad \lesssim
      \bracket{\eps_\corr+ \eps_\phase N^2 (1+\eps_\phase)^{N-1}
      }\norm{\phi}_{L^\infty} e^{-\lambda_1(t-\tcorr)_+}.
    \end{split}
  \end{equation}
  If, in addition, $\eps_\corr <1$, then
  \begin{equation}
    \label{e:unified_Econd_err}
    \begin{split}
      &\abs{\E^{\mu_0}\bracket{\phi(X_{T^{\rm s}}^{\rm s})\mid {
            T^{\rm s}>t}}-
        \E^{\mu_0}\bracket{\phi(X_{T^{\rm p}}^{\rm p})\mid {T^{\rm p}>t}}}\\
      &\quad \lesssim \frac{\eps_\corr + \eps_\phase
        N^2(1+\eps_\phase)^{N-1}}{1-\eps_\corr}\norm{\phi}_{L^\infty},
    \end{split}
  \end{equation}

\end{thm}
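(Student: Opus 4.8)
The plan is to couple the serial process $X^{\rm s}_t$ with the reference process $X^\refe_t$ — both of which start from $\mu_0$ and solve \eqref{e:odlang} — so that they are literally the same realization, and then to condition on the single event $E \equiv \set{T^\refe > \tcorr}$ that the reference survives the decorrelation step, with $q \equiv \prob^{\mu_0}[E] = \prob^{\mu_0}[T>\tcorr] \in (0,1]$. On $E^{\rm c}$, ParRep simply follows the reference, so $T^{\rm p} = T^{\rm s} = T^\refe < \tcorr$ and $X^{\rm p}_{T^{\rm p}} = X^{\rm s}_{T^{\rm s}}$; thus $E^{\rm c}$ contributes identically to the two processes for every $t$, and nothing at all once $t \geq \tcorr$. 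On $E$ the reference is discarded and replaced by the $N$ dephased replicas, which by Theorem~\ref{t:dephasing} are i.i.d.\ with law $\mu_\phase$ and — this is the point — independent of $E$, since the dephasing uses only $\mu_\phase^0$ and independent Brownian motions; the physical ParRep exit data is then $T^{\rm p} = \tcorr + N T^\star$ and $X^{\rm p}_{T^{\rm p}} = X^\star_{T^\star}$. On the same event, by the Markov property, $(T^{\rm s}-\tcorr,\, X^{\rm s}_{T^{\rm s}})$ has, conditionally on $E$, the law of $(T, X_T)$ under $\prob^{\mu_{\tcorr}}$.

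Granting this decomposition, for $t \ge \tcorr$ every quantity in \eqref{e:unified_Pt_err}--\eqref{e:unified_Econd_err} factors through $q$, and the problem reduces to comparing a $\mu_{\tcorr}$-statistic of $(T,X_T)$ with the corresponding $\mu_\phase$-statistic of $(NT^\star,\,X^\star_{T^\star})$; I would do this by inserting the QSD as an intermediary and using two triangle inequalities. On the serial side, Theorem~\ref{t:single_exit_est} gives $\abs{\prob^{\mu_{\tcorr}}[T>s] - e^{-\lambda_1 s}} \le \eps_\corr e^{-\lambda_1 s}$; and rather than feed the indicator-bearing observable $\phi(X_T)1_{T>s}$ into Theorem~\ref{t:decorr_err} directly, I would apply Lemma~\ref{e:advancetime} to write $\E^{\mu_{\tcorr}}[\phi(X_T)1_{T>s}] = \E^{\mu_{\tcorr+s}}[\phi(X_T)]\,\prob^{\mu_{\tcorr}}[T>s]$, bound the first factor by $\int_{\partial W}\phi\,d\rho + \bigo(\eps_\corr\norm{\phi}_{\linf})$ via Theorem~\ref{t:decorr_err} and Proposition~\ref{p:qsdproperties}, and the second by $(1+\eps_\corr)e^{-\lambda_1 s}$, to conclude $\E^{\mu_{\tcorr}}[\phi(X_T)1_{T>s}] = e^{-\lambda_1 s}\int_{\partial W}\phi\,d\rho + \bigo(\eps_\corr\norm{\phi}_{\linf}e^{-\lambda_1 s})$. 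On the parallel side, the identity $\set{NT^\star>s} = \set{T^\star>s/N}$ together with $e^{-N\lambda_1(s/N)} = e^{-\lambda_1 s}$ lets me read off from Theorem~\ref{t:par_err} that $\abs{\prob^{\mu_\phase}[NT^\star>s] - e^{-\lambda_1 s}} \le N\eps_\phase(1+\eps_\phase)^{N-1}e^{-\lambda_1 s}$ (from \eqref{e:par_Texit_err}) and $\abs{\E^{\mu_\phase}[\phi(X^\star_{T^\star})1_{NT^\star>s}] - e^{-\lambda_1 s}\int_{\partial W}\phi\,d\rho} \lesssim N^2\eps_\phase(1+\eps_\phase)^{N-1}\norm{\phi}_{\linf}e^{-\lambda_1 s}$ (from \eqref{e:par_exit_err}). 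Combining the two sides about the common QSD value and multiplying by $q\le 1$ gives \eqref{e:unified_Pt_err} and \eqref{e:unified_E_err} for $t\ge\tcorr$ with $s=t-\tcorr$; for $t<\tcorr$ the survival-probability difference is identically $0$ and the hitting-point difference collapses (modulo the common factor $q$) to its $s=0$ instance, which is why the stated bounds carry $e^{-\lambda_1(t-\tcorr)_+}$.

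Two refinements need a separate word. For the sharper decorrelation term $\eps_\corr e^{-\lambda_1 t}$ in \eqref{e:unified_Pt_err} (instead of $\eps_\corr e^{-\lambda_1(t-\tcorr)}$), I would not route through $\prob^{\mu_{\tcorr}}$ but expand $\prob^{\mu_0}[T>t] - q\,e^{-\lambda_1(t-\tcorr)}$ in the eigenbasis: the $k=1$ terms cancel and each remaining term carries $e^{-\lambda_k\tcorr}e^{-\lambda_1(t-\tcorr)} = e^{-(\lambda_k-\lambda_1)\tcorr}e^{-\lambda_1 t}$, so the same summation estimate used in the proof of Theorem~\ref{t:decorr_err} delivers $\eps_\corr e^{-\lambda_1 t}$. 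For the conditional estimate \eqref{e:unified_Econd_err} I would use the elementary bound $\abs{\tfrac AB - \tfrac CD} \le \tfrac{|A-C| + \norm{\phi}_{\linf}\,|B-D|}{B}$ with $A,C$ the un-normalized hitting-point expectations and $B = \prob^{\mu_0}[T^{\rm s}>t]$, $D = \prob^{\mu_0}[T^{\rm p}>t]$ (using $|C| \le \norm{\phi}_{\linf}D$): the numerator is already controlled by \eqref{e:unified_E_err} and \eqref{e:unified_Pt_err}, and in the denominator the factored lower bound $B = q\,\prob^{\mu_{\tcorr}}[T>t-\tcorr] \ge q(1-\eps_\corr)e^{-\lambda_1(t-\tcorr)}$ for $t\ge\tcorr$ (and $B \ge q$ for $t<\tcorr$) cancels the $q$ in the numerator and leaves exactly the $(1-\eps_\corr)^{-1}$.

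The parts I expect to be delicate are the probabilistic bookkeeping rather than the analysis: (i) justifying the coupling and decomposition — that on $E$ the replicas truly have law $\mu_\phase$ and are independent of $E$, so the identity $T^{\rm p} = \tcorr + NT^\star$ may be invoked conditionally, and that on $E^{\rm c}$ the two processes agree realization-by-realization; and (ii) organizing the quotient in \eqref{e:unified_Econd_err} so that the common factor $q$ — which may be small, e.g.\ when $\mu_0$ is concentrated near $\partial W$, as discussed after Theorem~\ref{t:decorr_err} — cancels and no spurious $1/q$ survives. Everything analytic is quoted from Theorems~\ref{t:decorr_err}, \ref{t:single_exit_est}, \ref{t:par_err}, Lemma~\ref{e:advancetime}, and Proposition~\ref{p:qsdproperties}; what remains is the assembly together with a handful of triangle inequalities.
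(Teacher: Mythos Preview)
Your proposal is correct and follows essentially the same architecture as the paper: condition on whether the reference survives to $\tcorr$, observe that the pre-$\tcorr$ contributions cancel, insert the QSD as an intermediary, and invoke Theorems~\ref{t:single_exit_est} and~\ref{t:par_err} (together with Lemma~\ref{e:advancetime}) on the serial and parallel sides respectively, then handle the conditional estimate via the same quotient manipulation and the lower bound $\prob^{\mu_{\tcorr}}[T>(t-\tcorr)_+]\ge(1-\eps_\corr)e^{-\lambda_1(t-\tcorr)_+}$. Your eigenbasis argument for the sharper decay $\eps_\corr e^{-\lambda_1 t}$ in the first term of \eqref{e:unified_Pt_err} is a genuine addition --- the paper's proof, as written, substitutes \eqref{e:Pt_decorr_err} into \eqref{e:Pt_partition} and drops the prefactor $\prob^{\mu_0}[T^{\rm s}>\tcorr]\le 1$, which literally yields $\eps_\corr e^{-\lambda_1(t-\tcorr)_+}$ rather than $\eps_\corr e^{-\lambda_1 t}$; your direct spectral computation of $\prob^{\mu_0}[T>t]-q\,e^{-\lambda_1(t-\tcorr)}$ closes that gap cleanly.
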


\begin{proof}
  We begin by decomposing
  \begin{equation*}
    \begin{split}
      \prob^{\mu_0}\bracket{T^\ser > t} &=
      \prob^{\mu_0}\bracket{T^\ser >
        t\mid T^\ser \leq \tcorr}\prob^{\mu_0} \bracket{T^\ser\leq \tcorr}\\
      &\quad + \prob^{\mu_0}\bracket{T^\ser > t\mid T^\ser >
        \tcorr}\prob^{\mu_0} \bracket{T^\ser> \tcorr}.
    \end{split}
  \end{equation*}
  We analogously decompose $\prob^{\mu_0}\bracket{T^\para > t}$. For
  $t\leq \tcorr$, the serial algorithm and the reference process of
  ParRep have the same law.  Hence,
  \begin{gather*}
    \prob^{\mu_0}\bracket{T^{\rm s}\leq \tcorr} =
    \prob^{\mu_0}\bracket{T^{\rm p}\leq \tcorr},\\
    \prob^{\mu_0}\bracket{T^\ser > t\mid T^\ser \leq \tcorr} =
    \prob^{\mu_0}\bracket{T^\para > t\mid T^\para \leq \tcorr}.
  \end{gather*}
  Consequently, error only manifests itself if the parallel step is
  engaged,
  \begin{equation*}
    \begin{split}
      &\abs{\prob^{\mu_0}\bracket{T^\ser > t} -
        \prob^{\mu_0}\bracket{T^\para > t} }\\
      &= \prob^{\mu_0}\bracket{T^{\rm s}>
        \tcorr}\abs{\prob^{\mu_0}\bracket{T^\ser > t\mid T^\ser
          >\tcorr} - \prob^{\mu_0}\bracket{T^\para > t\mid T^\para >
          \tcorr}}.
    \end{split}
  \end{equation*}
  Comparing against the QSD,
  \begin{equation}
    \label{e:Pt_partition}
    \begin{split}
      &\abs{\prob^{\mu_0}\bracket{T^\ser > t} -
        \prob^{\mu_0}\bracket{T^\para > t} }\\
      &\leq \prob^{\mu_0}\bracket{T^{\rm s}>
        \tcorr}\abs{\prob^{\mu_0}\bracket{T^\ser >
          t\mid T^\ser >\tcorr} - \prob^{\nu}\bracket{T > (t-\tcorr)_+} } \\
      &\quad + \prob^{\mu_0}\bracket{T^{\rm s}>
        \tcorr}\abs{\prob^{\mu_0}\bracket{T^\para > t\mid T^\para >
          \tcorr}- \prob^{\nu}\bracket{T > (t-\tcorr)_+} }.
    \end{split}
  \end{equation}
  Examining the first term,
  \begin{equation*}
    \prob^{\mu_0}\bracket{T^\ser >
      t\mid T^\ser >\tcorr}  = \prob^{\mu_\corr} \bracket{T^\ser > (t-\tcorr)_+}.
  \end{equation*}
  By assumption and Theorem \ref{t:single_exit_est}
  \begin{equation}
    \label{e:Pt_decorr_err}
    \abs{ \prob^{\mu_\corr} \bracket{T^\ser > (t-\tcorr)_+}-
      \prob^{\nu}\bracket{T > (t-\tcorr)_+} } \leq \eps_\corr e^{-\lambda_1(t-\tcorr)_+}.
  \end{equation}
  For the other term, since the exit time is beyond $\tcorr$ the
  parallel step engages.  The single reference process is replaced by
  the ensemble of $N$ replicas drawn from $\mu_\phase$, and $T^\para =
  N T^\star + \tcorr$.  Hence,
  \begin{equation*}
    \prob^{\mu_0}\bracket{T^\para >
      t\mid T^\para > \tcorr} = \prob^{\mu_\phase}\bracket{T^\star> \tfrac{1}{N}(t-\tcorr)_+}.
  \end{equation*}
  Therefore, by Theorem \ref{t:par_err}
  \begin{equation}
    \label{e:Pt_par_err}
    \begin{split}
      &\abs{\prob^{\mu_\phase}\bracket{T^\star>
          \tfrac{1}{N}(t-\tcorr)_+} -
        \prob^{\nu}\bracket{T > (t-\tcorr)_+}}\\
      &\quad \leq \eps_\phase N (1+ \eps_\phase)^{N-1}e^{-\lambda_1
        (t-\tcorr)_+}
    \end{split}
  \end{equation}
  Substituting \eqref{e:Pt_decorr_err} and \eqref{e:Pt_par_err} into
  \eqref{e:Pt_partition}, we obtain \eqref{e:unified_Pt_err}.

  To obtain \eqref{e:unified_E_err}, we again decompose as
  \begin{equation*}
    \begin{split}
      \E^{\mu_0}\bracket{\phi(X_{T^{\rm s}}^{\rm s})1_{T^\ser>t}}& =
      \E^{\mu_0}\bracket{\phi(X_{T^{\rm s}}^{\rm s})1_{T^\ser>t}\mid
        T^{\rm s}\leq\tcorr}\prob^{\mu_0}\bracket{T^{\rm s}\leq \tcorr} \\
      &\quad + \E^{\mu_0}\bracket{\phi(X_{T^{\rm s}}^{\rm s})1_{T^{\rm
            s}>t}\mid T^{\rm s}>\tcorr}\prob^{\mu_0}\bracket{T^{\rm
          s}> \tcorr}.
    \end{split}
  \end{equation*}
  and analogously decompose the ParRep expectation. Again, for $t\leq
  \tcorr$, the serial algorithm and the reference process of ParRep
  have the same law.  Thus
  \begin{equation*}
    \E^{\mu_0}\bracket{\phi(X_{T^{\rm s}}^{\rm s})1_{T^{\rm s}>t}\mid
      T^{\rm s}\leq\tcorr} = \E^{\mu_0}\bracket{\phi(X_{T^{\rm p}}^{\rm p} )1_{T^{\rm p}>t}\mid
      T^{\rm p}\leq\tcorr}.
  \end{equation*}
  Consequently,
  \begin{equation*}
    \begin{split}
      &\abs{\E^{\mu_0}\bracket{\phi(X_{T^\ser}^{\ser})1_{T^\ser>t}}
        - \E^{\mu_0}\bracket{\phi(X_{T^\para}^\para)1_{T^\para>t}} }\\
      &= \prob^{\mu_0}\bracket{T^\ser> \tcorr}
      \abs{\E^{\mu_0}\bracket{\phi(X_{T^\ser}^{\ser})1_{T^\ser>t}\mid
          T^\ser>\tcorr} -
        \E^{\mu_0}\bracket{\phi(X_{T^\para}^\para)1_{T^\para>t}\mid
          T^\para>\tcorr}}.
    \end{split}
  \end{equation*}
  Using the QSD as an intermediary,
  \begin{equation}
    \label{e:E_partition}
    \begin{split}
      &\abs{\E^{\mu_0}\bracket{\phi(X_{T^\ser}^{\ser})1_{T^\ser>t}\mid
          T^\ser>\tcorr} -
        \E^{\mu_0}\bracket{\phi(X_{T^\para}^\para)1_{T^\para>t}\mid
          T^\para>\tcorr}}\\
      &\leq \abs{\E^{\mu_0}\bracket{\phi(X_{T^\ser}^{\ser})1_{T^\ser>t}\mid T^\ser>\tcorr} - \E^{\nu}\bracket{\phi(X_{T})1_{T>(t-\tcorr)_+} }}\\
      &\quad + \abs{
        \E^{\nu}\bracket{\phi(X_{T})1_{T>(t-\tcorr)_+}
        } -
        \E^{\mu_0}\bracket{\phi(X_{T^\para}^\para)1_{T^\para>t}\mid
          T^\para>\tcorr}}.
    \end{split}
  \end{equation}
  For the first term,
  \begin{equation*}
    \begin{split}
      \E^{\mu_0}\bracket{\phi(X_{T^\ser}^{\ser})1_{T^\ser>t}\mid
        T^\ser>\tcorr} &=
      \E^{\mu_\corr}\bracket{\phi(X_{T^\ser}^{\ser})1_{T^\ser>(t-\tcorr)_+}}\\
      & = \E^{\mu_{\tcorr +(t-\tcorr)_+}
      }\bracket{\phi(X_{T^\ser}^{\ser})}\prob^{\mu_
        \corr}\bracket{{T^\ser >(t-\tcorr)_+} }.
    \end{split}
  \end{equation*}
  Hence,
  \begin{equation}
    \label{e:E_decorr_err}
    \begin{split}
      &\abs{\E^{\mu_0}\bracket{\phi(X_{T^\ser}^{\ser})1_{T^\ser>t}\mid
          T^\ser>\tcorr} -
        \E^{\nu}\bracket{\phi(X_{T})1_{T>(t-\tcorr)_+}
        }}\\
      &\lesssim
      \eps_{\corr}\norm{\phi}_{L^\infty}e^{-\lambda_1(t-\tcorr)_+}.
    \end{split}
  \end{equation}
  For the other term, since the parallel step has engaged,
  \begin{equation}
    \E^{\mu_0}\bracket{\phi(X_{T^\para}^\para)1_{T^\para>t}\mid
      T^\para>\tcorr} = \E^{\mu_\phase}\bracket
      {\phi(X_{T^\star}^\star)1_{T^\star>\frac{1}{N}(t-\tcorr)_+}}.
  \end{equation}

  By Theorem \ref{t:par_err},
  \begin{equation}
    \label{e:E_par_err}
    \begin{split}
      &\abs{
        \E^{\nu}\bracket{\phi(X_{T})1_{T>(t-\tcorr)_+}
        } -
        \E^{\mu_0}\bracket{\phi(X_{T^\para}^\para)1_{T^\para>t}\mid
          T^\para>\tcorr}}\\
      & = \abs{
        \E^{\nu}\bracket{\phi(X_{T})1_{T>(t-\tcorr)_+}
        } -
        \E^{\mu_\phase}\bracket{\phi(X_{T^\star}^\star)1_{T^\star>\frac{1}{N}(t-\tcorr)_+}}}\\
      &\leq \eps_\phase N^2 \norm{\phi}_{L^\infty}
      (1+\eps_\phase)^{N-1} e^{-\lambda_1 (t-\tcorr)_+}.
    \end{split}
  \end{equation}
  Using \eqref{e:E_decorr_err} and \eqref{e:E_par_err} in
  \eqref{e:E_partition} gives \eqref{e:unified_E_err}.

  \eqref{e:unified_Econd_err} is proved using the preceding estimates,
  \begin{equation}
    \label{e:Econd_est1}
    \begin{split}
      &\abs{\E^{\mu_0}\bracket{\phi(X_{T^{\rm s}}^{\rm s})\mid {
            T^{\rm s}>t}}-
        \E^{\mu_0}\bracket{\phi(X_{T^{\rm p}}^{\rm p})\mid {T^{\rm p}>t}}}\\
      &\leq \abs{\frac{\E^{\mu_0}\bracket{\phi(X_{T^{\rm s}}^{\rm
              s})1_{ T^{\rm s}>t}}-
          \E^{\mu_0}\bracket{\phi(X_{T^{\rm p}}^{\rm p})1_{T^{\rm p}>t}}}{\prob^{\mu_0}\bracket{T^\ser>t}}}\\
      &\quad + \abs{ \E^{\mu_0}\bracket{\phi(X_{T^{\rm p}}^{\rm
            p})1_{T^{\rm
              p}>t}}}\abs{\frac{\prob^{\mu_0}\bracket{T^\ser>t} -
          \prob^{\mu_0}\bracket{T^\para>t}}{\prob^{\mu_0}\bracket{T^\ser>t}\prob^{\mu_0}\bracket{T^\para>t}}}\\
      &\lesssim \bracket{\eps_\corr + \eps_\phase
        N^2(1+\eps_\phase)^{N-1}}\norm{\phi}_{L^\infty}\frac{e^{-\lambda_1(t-\tcorr)_+}\prob^{\mu_0}\bracket{T^\ser>\tcorr}
      }{\prob^{\mu_0}\bracket{T^\ser>t}}\\
      &\quad+ \bracket{ \eps_\corr + \eps_\phase N (1 +
        \eps_\phase)^{N-1}}\norm{\phi}_{L^\infty}\frac{e^{-\lambda_1(t-\tcorr)_+}\prob^{\mu_0}\bracket{T^\ser>\tcorr}
      }{\prob^{\mu_0}\bracket{T^\ser>t}}\\
      &\lesssim \bracket{\eps_\corr + \eps_\phase
        N^2(1+\eps_\phase)^{N-1}}\norm{\phi}_{L^\infty}\frac{e^{-\lambda_1(t-\tcorr)_+}\prob^{\mu_0}\bracket{T^\ser>\tcorr}
      }{\prob^{\mu_0}\bracket{T^\ser>t}}.
    \end{split}
  \end{equation}
Since $(t-\tcorr)_+ + \tcorr \geq t$,
\[
\prob^{\mu_0}\bracket{T^\ser > (t-\tcorr)_+ + \tcorr}\leq
\prob^{\mu_0}\bracket{T^\ser > t}.
\]
Therefore,
\[
\frac{\prob^{\mu_0}\bracket{T^\ser>\tcorr}}{\prob^{\mu_0}\bracket{T^\ser>t}}\leq
\frac{\prob^{\mu_0}\bracket{T^\ser>\tcorr}}{\prob^{\mu_0}\bracket{T^\ser
    > (t-\tcorr)_+ + \tcorr}} =
\frac{1}{\prob^{\mu_\corr}\bracket{T^\ser > (t-\tcorr)_+}}
\]
and
\[
\frac{e^{-\lambda_1(t-\tcorr)_+}\prob^{\mu_0}\bracket{T^\ser>\tcorr}
      }{\prob^{\mu_0}\bracket{T^\ser>t}}\leq  \frac{1}{1-\eps_\corr}.
\]
  Substituting this estimate into \eqref{e:Econd_est1} yields
  \eqref{e:unified_Econd_err}.

\end{proof}

\section{Discussion}
\label{s:disc}

We have proven several theorems on the convergence of the exit
distributions of parallel replica dynamics to the underlying
unaccelerated problem.  We have also demonstrated the effectiveness of
a dephasing algorithm done in conjunction with the decorrelation
step. However, there remain several problems associated with ParRep,
both in fully justifying it as an algorithm, and implementing it in
practice.

\subsection{Error Estimates}
As we pointed out in the introduction, the error estimates in Theorem
\ref{t:par_err_soft} and Theorem \ref{t:unified_soft} include terms
which grow as $N\to \infty$.  If we take
\[
\tphase \gtrsim k_\phase\frac{\log N}{\lambda_2 - \lambda_1}
\]
for some multiplier, $k_\phase$, then the most egregious term in the estimates
is bounded by
\begin{equation*}
  \begin{split}
    \lim_{N\to \infty} N^2 \eps_\phase (1 + \eps_\phase)^{N-1} & \leq
    \lim_{N\to \infty} C_\phase e^{-k_\phase/2} \paren{1 +
      e^{-k_\phase}{C_\phase}/{N}}^{N-1}\\
    & \quad = e^{-k_\phase/2} e^{C_\phase e^{-k_\phase}}.
  \end{split}
\end{equation*}
Hence, taking $k_\phase$ large enough, the error can be made arbitrarily
small.  In contrast, the decorrelation error is independent of $N$,
and reducing the decorrelation error will not correct for the error
due to more replicas.

The error estimate on the exit time in Theorem \ref{t:par_err_soft} is
a bit deceiving and merits additional comment.  It would appear that
when we consider this cumulative distribution function at any $t>0$,
then, sending $N\to \infty$, the error vanishes.  This is a reflection
on the estimate being an absolute error.  Dividing out by
$e^{-N\lambda_1 t}$ lets us evaluate the relative error, which we see
is uniformly bounded in $t$.

We also remark that since
\[
\E[T] = \int_0^\infty \prob[T>t] dt,
\]
we can obtain error estimates on the expected exit time.  Using the
estimates in Theorem \ref{t:par_err_soft}, we see that provided $N
\eps_\phase (1+\eps_\phase)^{N-1}<1$, we have
\begin{equation}
  \abs{\E^{\mu_\phase}[T^\star]-\frac{1}{N\lambda_1}}\leq N
  \eps_\phase (1+\eps_\phase)^{N-1}.
\end{equation}
Similarly, using the estimates in Theorem \ref{t:unified_soft},
\begin{equation}
  \abs{\E^{\mu_0}[T^{\rm s}]-\E^{\mu_0}[T^{\rm p}] }\lesssim \eps_\corr
  + N \eps_\phase(1+\eps_\phase)^{N-1}.
\end{equation}

It remains to be determined whether our estimates are sharp -- is the
growth in $N$ real, or an artifact of our analysis?  While we cannot
yet address the sharpness, a simple numerical experiment indicates that
there is growth in the error as $N$ increases.    Consider the problem
\begin{equation}
\label{e:harmonicwell}
dX_t = -4 X_t dt + \sqrt{2} dB_t
\end{equation}
for the well $W=[-1,1]$, and suppose we launch $N$ replicas from the Dirac
distribution $X_0 = .1$.  By symmetry, we know that if we had perfect
dephasing, then during the parallel step
\[
\prob^{\nu}[X^\star_{T^\star} = 1] = \prob^{\nu}[X^\star_{T^\star} = -1]=
\tfrac{1}{2}.
\]
But if we incompletely dephase, then, because of our asymmetric
initial condition, we expect a higher probability of escaping at $1$
than $-1$.  For this problem, we can compute by
spectral methods that $\lambda_1 \approx 0.971972 $ and $\lambda_2
\approx 8.98262$.

To test our conjecture, that the error increases with $N$, we ran 10000
realizations of the dephasing and parallel steps with values of $N=100,
200, \ldots 1000$.  We employed Euler-Maruyama time stepping with
$\Delta t = 10^{-4}$.  We then ran this with with $t_\phase = .05$, .1
and .2.  The results appear in Figure \ref{f:harmonicwell}.

\begin{figure}
 \subfigure[$\tphase=.05$]{\includegraphics[width=6cm]{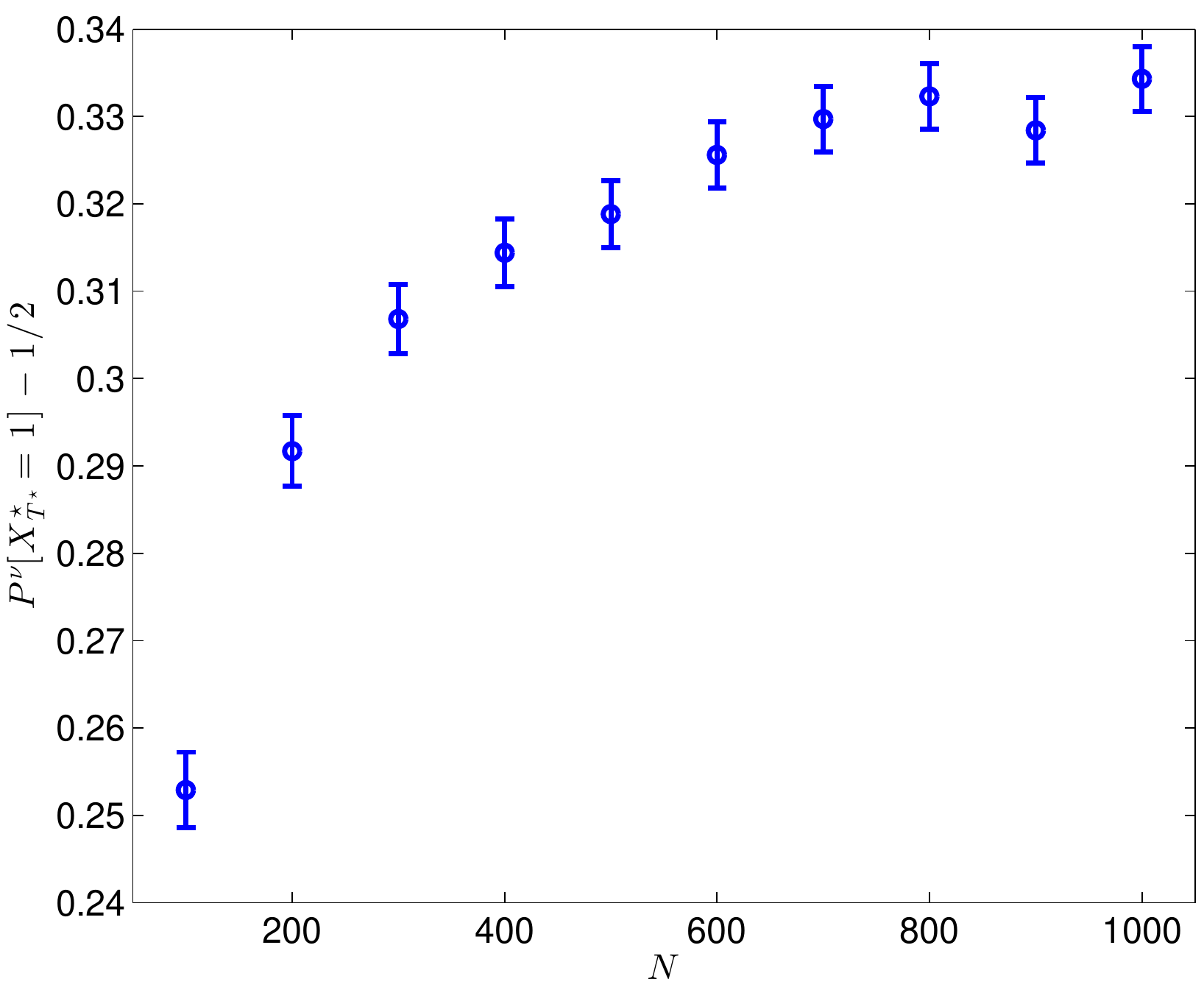}}
 \subfigure[$\tphase=.1$]{\includegraphics[width=6cm]{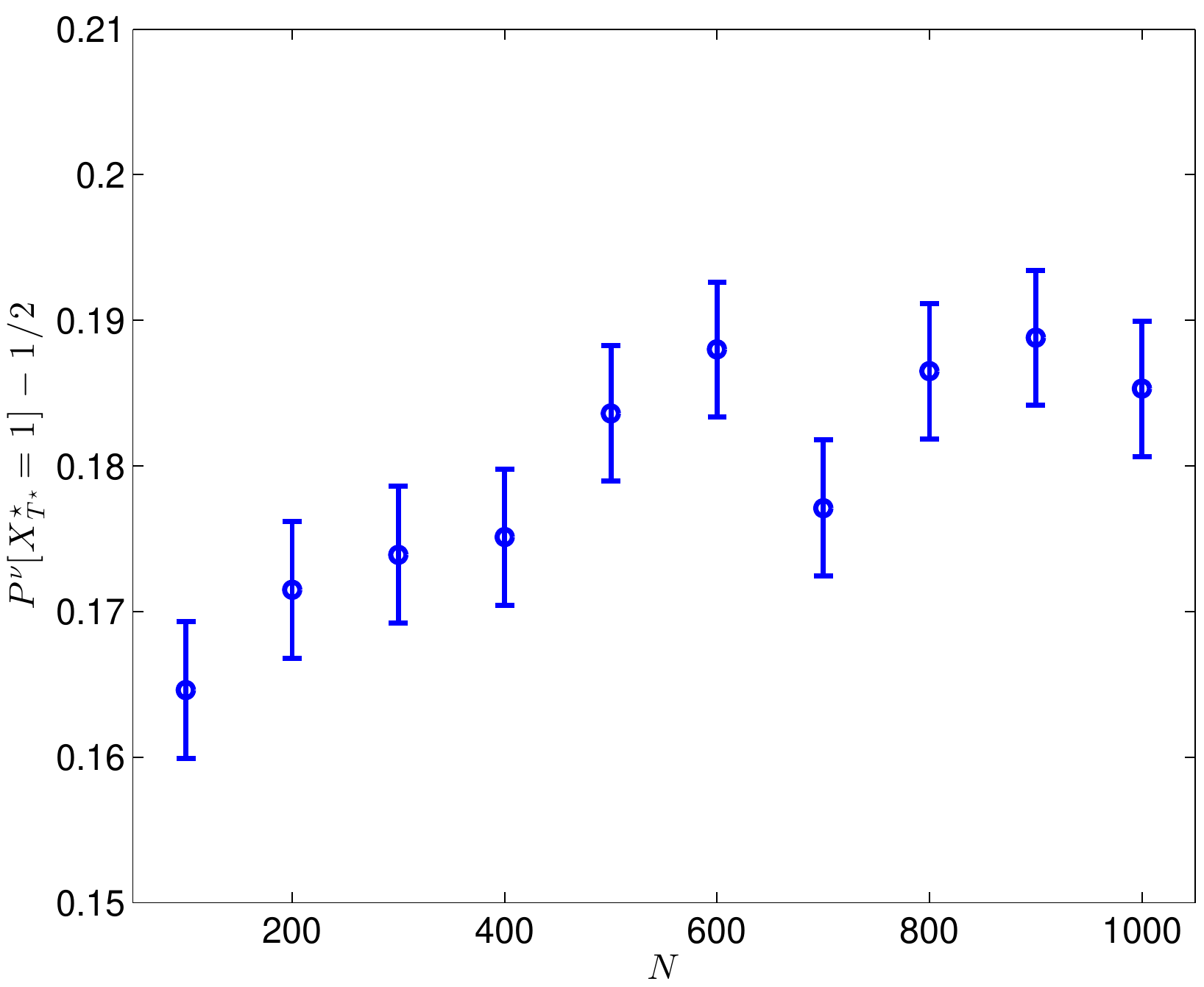}}
 \subfigure[$\tphase=.2$]{\includegraphics[width=6cm]{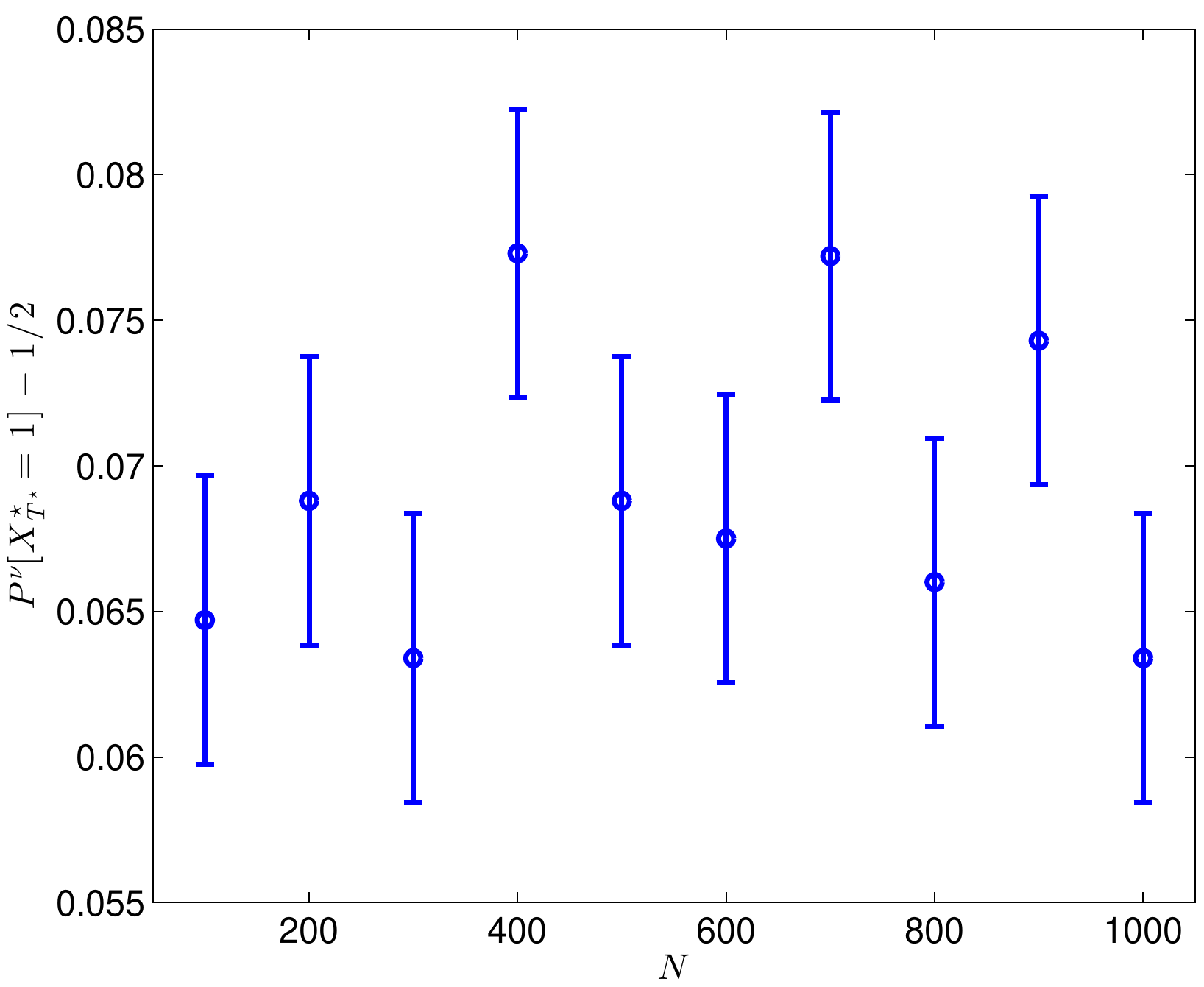}}
\caption{Three experiments on the impact of imperfect dephasing for
  \eqref{e:harmonicwell}.  With perfect dephasing, the probability of
  exiting at $x=1$ would be $1/2$, but because the initial condition
  is $.1$ and the dephasing time is finite, there is a persistent
  bias and growth in the error as $N$ increases.  95\%
  confidence intervals are plotted for 10000 realizations of each
  value of $N$.}
\label{f:harmonicwell}
\end{figure}

As we predicted, the errors decrease as $\tphase$ increases.  For the
smallest dephasing time, we also see the error increase with $N$.  At
$\tphase = .1$, there is still some increase in the error as $N$
increases, though it is less dramatic.  When $\tphase = .2$, the trend
appears to have been lost to numerical error and sampling
variability.

\subsection{Numerical Parameters \& Eigenvalues}
An essential question is how to choose of the dephasing and
decorrelation time parameters.  Based on the arguments in the
preceding section, roughly, if we desire the errors from decorrelation
and dephasing to be of the same order, then,
\[
2\log (N)\tcorr \sim \tphase.
\]
So, while they should not be the same, if we can estimate one, we can
infer the other.  There will also be some mismatch due to different
starting distributions for the reference process and the dephasing
replicas.

$\tcorr$ must be large enough so as to be representative of the QSD
while remaining computationally efficient.  Taking too large a value
of $\tcorr$ will just replicate the serial implementation with no
acceleration.  Theorem \ref{t:decorr_err} provides some insight,
already discussed in \cite{Bris:2011p13365}.  The error of $\mu_\corr$
is controlled by the following quantities:
\begin{itemize}
\item The $\mu_0$ initial distribution,
\item$\norm{P_{[2,\infty)}\mu_0}_{H^{-s}_\mu}$, the mismatch between
  the initial distribution of the reference process and the
  quasistationary distribution, $\nu$;
\item The value of $ \underline t $;
\item $\int u_1 d \mu_0$;
\item $\lambda_2 - \lambda_1$ , the spectral gap between the first two
  eigenvalues.
\end{itemize}
Based on these quantities, and how they relate to $\tcorr$, to make
the decorrelation error small, we would certainly need
\begin{equation}
  \label{e:tphase_est}
  \tcorr \gtrsim \frac{\ln \bracket{\paren{\int u_1 d\mu_0}^{-1} \norm{P_{[2,\infty)}\mu_0}_{H^{-s}_\mu} }}{\lambda_2 -
    \lambda_1} +  \underline t .
\end{equation}

The eigenvalues also play an important role in determining which problems
would benefit from ParRep is an outstanding problem, which is
an outstanding issue. For ParRep
to be efficient, we need
\begin{equation}
  \label{e:corr_constraint1}
  \tcorr \ll \E^{\mu_{\corr}}[\Texit]\sim \E^{\nu}[\Texit] = \frac{1}{\lambda_1}.
\end{equation}
This is desirable because, in the event $X_t$ does not leave the well
during the decorrelation step, it is will now take a comparatively
long time to exit. In \cite{Bris:2011p13365}, the authors suggested
\[
\tcorr \leq \E^{\mu_0}[\Texit].
\]
However, this can be problematic, depending on $\mu_0$. As previously
discussed, if the replicas launch from a position too close to the
boundary, $\E^{\mu_0}[\Texit]$ might be rather small.  This is
mitigated as $\tcorr$ becomes larger, leading to
$\E^{\mu_{\corr}}[\Texit]$ approaching the escape time of the QSD,
$\lambda_1^{-1}$.

We can see from \eqref{e:tphase_est} and constraint
\eqref{e:corr_constraint1} that ParRep will be most effective when
\begin{equation}
  \label{e:gap_cond}
  \frac{1}{\lambda_2 - \lambda_1}\ll \frac{1}{\lambda_1},
\end{equation}
or, alternatively, when $\lambda_1 \ll \lambda_2$.  Under these
conditions, $\mu_\corr$ converges to $\nu$ much more rapidly than we
expect $X_t$ to exit $W$.  \eqref{e:gap_cond} can also be viewed as a
characterization of when $W$ corresponds to a metastable state for
\eqref{e:odlang}.

Computing $\lambda_1$ and $\lambda_2$ directly from a discretization
of the elliptic operator L is intractable for all but the lowest
dimensional systems.  Instead, one must use Monte Carlo methods, such
as those found in
\cite{Lejay:2007cs,Lejay:2008bt,ElMakrini:2007im,DelMoral:2003bn,Rousset:2006ih}.
However, these studies, some of which use branching particles
processes like Fleming-Viot (discussed below), only yield $\lambda_1$.

In a forthcoming work, %\cite{XXX},
we explore a mechanism for
computing $\lambda_2 - \lambda_1$ using observables.  The idea stems
from calculations in Theorem \ref{t:decorr_err_soft}, that, for an
observable $\mathcal{O}(x)$, as $t\to \infty$,
\begin{equation}
  \E^{\mu_0}\bracket{\mathcal{O}(X_t)\mid T> t} = \int_W \mathcal{O}(x)d\nu(x) +
  C(\mu_0, \mathcal{O})
  e^{-(\lambda_2-\lambda_1)t} + \ldots
\end{equation}
In principle, $\lambda_2-\lambda_1$ could be extracted from a time
series of $\E^{\mu_0}\bracket{\mathcal{O}(X_t)\mid T> t}$.  This
introduces a variety of questions, such as what observables to use and
how to perform such a fitting.  Thus, we will have a method for
dynamically estimating $\tcorr$ and $\tphase$.

\subsection{Dephasing Mechanism}

The efficiency of our dephasing algorithm can be improved by the
availability of multiple processors.  For instance, assume we have $N$
processors available for the replicas and that $k$ replicas have
successfully been run until $\tcorr$.  We are still waiting for $N-k$
replicas to successfully dephase.  Rather than let $k$ processors sit
idle, they could record the successful replicas, and run independent
realizations.  As more replicas finish dephasing, more processors can
be brought to bear on the outstanding replicas.

In practice, as replicas are deemed to have been successfully
dephased, they are promoted to the parallel step, \cite{Perez:fk}.
Thus, there is no bottleneck at the dephasing step 
from waiting to get $N$ realizations dephased.

There are other approaches to dephasing too, such as  Fleming-Viot or
Moran branching interacting particle processes,
\cite{Bieniek:2011jf,Bieniek:2012jg,Grigorescu:2004bs,DelMoral:2011wi}.
These merit consideration for ParRep.  These approaches, which
randomly split a surviving process
every time another process exits the well, can provide additional
information, such as an estimate of $\lambda_1$.  Moreover, no
processor sits idle at anytime.  However, two challenges are
introduced.  On a practical level, one needs to implement additional
communication routines and synchronization across the processors to
request and send configurations as trajectories are killed.  The
second challenge is analytical, as the dephased processes will now be
only approximately independent.  This complicates the analysis of the
how the error in the dephasing step cascades through the parallel
step.

\subsection{Other Challenges}
Another task is to assess the cumulative error over many ParRep
cycles.  The hitting point distribution will be perturbed by the
algorithm, meaning that the sequence in which the states are visited
would also be perturbed.  Quantifying the error across many steps, and
showing that it may be made small, would complete the justification of
ParRep over the lifetime of a simulation.  But to begin such a study,
one must decide how to measure
\[
\dist(\mathcal{S}_t, \mathcal{S}_t^{\rm ParRep}).
\]
The problem is $\mathcal{S}_t$ is not a Markovian process.  A particle
that sits near the edge of the well is likely to exit much sooner than
one which is near the minima of the well.  But that information is
lost in the coarse graining.  Knowing how long $X_t$ has been in the
well provides some amount of information; it tells us the proximity to
the QSD, from which we can get an exponential exit time.

Despite the challenge of studying the coarse grained flow, we can
report that ParRep appears to work as predicted over multiple wells.
Consider the flow
\begin{equation}
\label{e:periodic}
dX_t = -2 \pi \sin(\pi X_t) dt + \sqrt{2} dB_t.
\end{equation}
For this equation, with initial condition $X_0 = 0$, we examined the time it
would take to reach the wells centered at $x = \pm 10$.  In other
words, we sought to compute
\[
T_{\pm 10} = \inf\set{t\mid \abs{X_t} \geq 9}.
\]
For this problem, we ran the full ParRep algorithm (decorrelation,
dephasing and parallel steps) within {\it each} well.  During
dephasing, the replicas were initiated from the minima of the present
well, $0, \pm 2\pi, \pm 4\pi, \ldots$  We ran 10000 realizations of this experiment, varying
$k_\corr$ and $k_\phase$, where
\begin{equation}
\label{e:multiple_params}
\tcorr  = \frac{k_\corr}{\lambda_2 - \lambda_1}, \quad \tphase  =
\frac{k_\phase}{\lambda_2 - \lambda_1}.
\end{equation}
Since the wells are periodic, we can use spectral methods to compute $\lambda_1
\approx .202280$ and $\lambda_2 \approx 16.2588$ once, and
we then have these values for all the wells.  The results, with
$\Delta t = 10^{-4}$ and $N=100$ replicas, appear in Figure \ref{f:manywell}

\begin{figure}
\includegraphics[width=10cm]{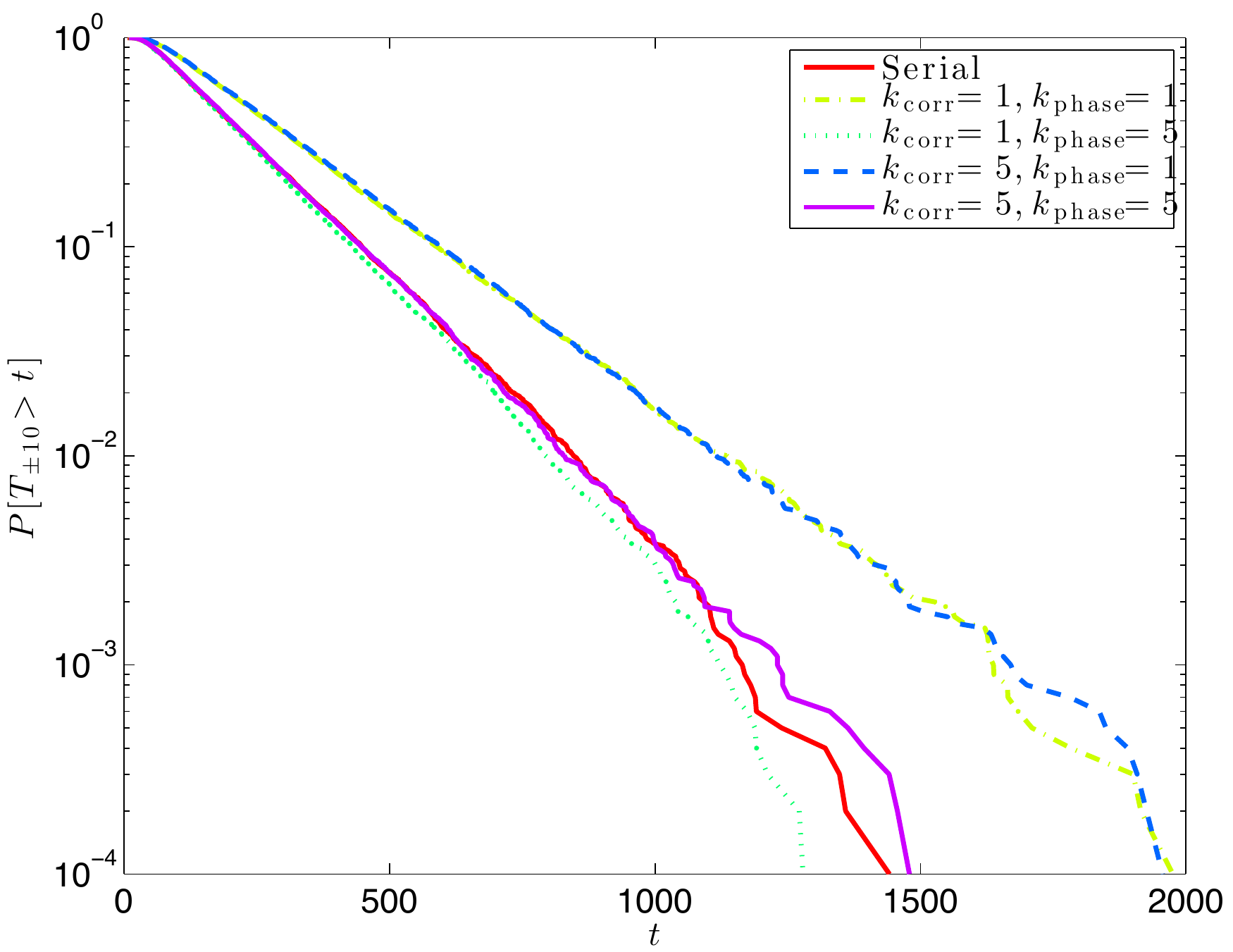}
\caption{The cumulative distribution for the time for it takes
  trajectory \eqref{e:periodic} to reach the wells centered at $\pm
  10$.  10000 realizations of each case were run with time step
  $\Delta t = 10^{-4}$.  $\tcorr$ and $\tphase$ relate to $k_\corr$
  and $k_\phase$ via \eqref{e:multiple_params}.  As expected, larger
  values of $\tcorr$ and $\tphase$ give better agreement with an
  unaccelerated process.}
\label{f:manywell}
\end{figure}

As we expect, for sufficiently large values of $\tcorr$ and $\tphase$,
the distributions agree with the serial process.  Indeed, in the cases
$k_\corr = k_\phase = 5$ and $k_\corr =1$, $k_\phase =5$, the exit
times agree with the serial realization at 5\% significance level
under a Kolmogorov-Smirnov test.  In addition, this experiment also
supports our calculations that, through the dephasing error, the
total error should be magnified by $N$ since increasing the
dephasing time improves the fit much more than increasing the
decorrelation time does.

Finally, we remark that we have only analyzed the continuous in time
problem, though we are ultimately interested in the associated
discrete in time algorithm.  Much of the analysis carries over to the
discrete in time case.  A discrete in time quasistationary
distribution exists, and there are extensive results on using
interacting particle algorithms for dephasing,
\cite{DelMoral:2004cv,DelMoral:2011wi}.  As in the continuous in time
case, there remains the subtlety of how to analyze the parallel step
when the dephased ensemble is only approximately independent.

However, the discrete time step introduces other subtleties.  Assume
one uses Euler-Maruyama time discretization with time step $\Delta
t$, and define the exit time as
\begin{equation}
  T^{\Delta t} =\inf\set{t_n\mid X_{t_n}\notin W}.
\end{equation}
For a uniform time step, we see that with no acceleration of the
dynamics, the exit times are integer multiples of $\Delta t$.  For ParRep,
this remains true for exits that take place during the decorrelation
step.  But for exit times taking place during the parallel step, the
exit times will be determined by multiples of $N \Delta t$.  With a
large number of processors, this effective time step could be quite
large. When comparing against the continuous in time problem, the
error of discretization could be magnified in ParRep.  In the
preceding experiment, $N\Delta t = .01$, which is small relative
to the exit time scale ($1/\lambda_1\approx 4.9$) and the
decorrelation time scale ($1/(\lambda_2 - \lambda_1) \approx .062$). Clearly, the
discrete in time case warrants a thorough investigation.

\appendix

\section{Summation Bounds}
\label{s:sum_bounds}

Much of our analysis relies on bounding series solutions,
\eqref{e:series_soln}, of \eqref{e:fk_parabolic}, to obtain
information about $X_t$ through the Feynman-Kac equation,
\eqref{e:fk}.  The key estimates needed in our work stem from Weyl's
Law for $L$:
\begin{prop}[Weyl's Law for $L$]
  \label{p:weylmu}
  There exist positive constants $c_1$ and $c_2$, independent of $k$,
  such that the eigenvalues of \eqref{e:linop} satisfy
  \begin{equation}
    \label{e:weyllaw}
    c_1 k^{{2}/{n}} \leq \lambda _k \leq c_2 k^{{2}/{n}}.
  \end{equation}
\end{prop}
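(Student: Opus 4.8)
\emph{Proof proposal.} The plan is to compare the eigenvalues of $L$ with those of the ordinary Dirichlet Laplacian on $W$ via the min-max principle, and then invoke the classical Weyl asymptotics.

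First I would record the variational characterization of $\lambda_k$. Since $L$ is self-adjoint and negative definite on $L^2_\mu$ with homogeneous Dirichlet conditions, integration by parts (the boundary term vanishing for functions in the form domain) gives
\begin{equation*}
  \inner{-Lf}{f}_\mu = \beta^{-1} Z^{-1}\int_W \abs{\grad f}^2 e^{-\beta V}\,dx,
  \qquad \inner{f}{f}_\mu = Z^{-1}\int_W f^2 e^{-\beta V}\,dx,
\end{equation*}
so that
\begin{equation*}
  \lambda_k = \min_{\substack{S\subset H^1_0(W)\\ \dim S = k}}\ \max_{f\in S\setminus\{0\}}\
  \frac{\beta^{-1}\int_W \abs{\grad f}^2 e^{-\beta V}\,dx}{\int_W f^2 e^{-\beta V}\,dx}.
\end{equation*}
Here I use that, since the weight $e^{-\beta V}$ is bounded above and below on $\overline W$, the form domain of $L$ in $L^2_\mu$ coincides with $H^1_0(W)$ with an equivalent norm, and that the spectrum is discrete; this is where the regularity of $\partial W$ and of $V$ enters, via the references cited in Section \ref{s:prelim}.

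Second, since $\overline W$ is bounded and $V$ is smooth on $\overline W$, there are constants $0 < m \le M < \infty$ with $m \le e^{-\beta V(x)} \le M$ for all $x\in\overline W$. Inserting these bounds separately into the numerator and denominator of the Rayleigh quotient, and using the min-max characterization, yields
\begin{equation*}
  \frac{m}{\beta M}\,\mu_k \ \le\ \lambda_k \ \le\ \frac{M}{\beta m}\,\mu_k,
\end{equation*}
where $\mu_k$ is the $k$-th Dirichlet eigenvalue of $-\Delta$ on $W$, characterized by the unweighted Rayleigh quotient $\int_W\abs{\grad f}^2\,dx / \int_W f^2\,dx$.

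Third, I would invoke the classical Weyl law: since $\partial W$ is sufficiently smooth, $\mu_k = (2\pi)^2 (\omega_n\abs{W})^{-2/n} k^{2/n}(1+o(1))$ as $k\to\infty$ ($\omega_n$ the volume of the unit ball), so in particular there are positive constants with $c_1' k^{2/n}\le \mu_k \le c_2' k^{2/n}$ for all large $k$; since each $\mu_k>0$, this two-sided polynomial bound extends to all $k\ge 1$ after adjusting the constants. Combining with the previous step gives \eqref{e:weyllaw} with, e.g., $c_1 = m c_1'/(\beta M)$ and $c_2 = M c_2'/(\beta m)$, again possibly readjusting for the finitely many small $k$. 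There is no serious obstacle: the only point requiring any care is the identification of the form domain of $L$ with $H^1_0(W)$ and the resulting equivalence of the weighted and unweighted $H^1$ norms, together with quoting a version of Weyl's asymptotic law valid under the stated boundary regularity. A fully self-contained alternative would be Dirichlet--Neumann bracketing of $W$ by cubes, which reproduces the same order-$k^{2/n}$ two-sided bound without the sharp constant; since only the order is needed, either route suffices.
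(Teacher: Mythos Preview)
Your proof is correct. Both your argument and the paper's rest on the same key observation: since $V$ is smooth on the compact set $\overline{W}$, the weight $e^{-\beta V}$ is bounded above and below by positive constants, so the operator $L$ is spectrally comparable to the Dirichlet Laplacian. The paper simply rewrites the eigenvalue problem in divergence form,
\[
  -\beta^{-1}\nabla\cdot\bigl(e^{-\beta V}\nabla u\bigr)=\lambda\, e^{-\beta V} u,
\]
and then invokes a black-box Weyl law for such weighted divergence-form operators (Theorem~6.3.1 of Davies, \emph{Spectral Theory and Differential Operators}). You instead carry out the min--max comparison with $-\Delta$ explicitly and then quote the classical Weyl asymptotics for the Laplacian. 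Your route is slightly more self-contained and makes the dependence of the constants on $m$, $M$, and $\beta$ transparent; the paper's route is shorter but hides exactly this comparison inside the cited theorem. Either way the mathematical content is the same.
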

Recall that $n$ denotes the dimension of the underlying problem;
$X_t\in \R^n$.

\begin{proof}
  We will not reproduce the proof here, which is accomplished by
  rewriting the eigenvalue problem as
  \begin{equation}
    \label{e:Ldivform}
    - \beta^{-1} \nabla\cdot  \paren{e^{-\beta V} \nabla u} = \lambda e^{-\beta V} u.
  \end{equation}
  This is justified because $V$ is smooth and $W$ is bounded; thus
  $e^{-\beta V}$ is smooth and nondegenerate.  This is now in the form
  of Theorem 6.3.1 of \cite{davies1996spectral} on Weyl's Law,
  yielding the result.
\end{proof}

Using Weyl's Law, we have our main summation result,
\begin{prop}
  \label{p:sum_bound2}
  Given $s\geq 0$, let $\mathbf{a} = (a_1, a_2, \ldots)$ satisfy
  \[
  \set{\sum_{k=1}^\infty \lambda_k^{-s}\abs{a_k}^2 }^{1/2} =
  \norm{\mathbf{a} }_{H^{-s}_\mu}<\infty.
  \]
  Let $f$ be defined as
  \begin{equation}
    f(\tau) \equiv \sum_{k=1}^\infty a_k \lambda_k^\alpha e^{-\tau \lambda_k}.
  \end{equation}
  For $ a > 0$, we have:
  \begin{enumerate}
  \item \begin{equation} \sup_{\tau\geq a} \abs{f(\tau)}\lesssim
      a^{-n/4 - \max\set{s/2 + \alpha, 0}} \norm{\mathbf{a}
      }_{H^{-s}_\mu} < \infty;
    \end{equation}
  \item The convergence of the series is uniform in $\tau \geq a$;
  \item $f$ is continuous.
  \end{enumerate}
\end{prop}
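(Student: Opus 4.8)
The plan is to reduce all three assertions to a single scalar bound on the exponential sum $G(\tau)\equiv\sum_{k\ge 1}\lambda_k^{\gamma}e^{-2\tau\lambda_k}$, with $\gamma=s+2\alpha$, and then control $G$ through Weyl's Law, Proposition~\ref{p:weylmu}. First I would apply Cauchy--Schwarz after the splitting $a_k\lambda_k^\alpha=(a_k\lambda_k^{-s/2})(\lambda_k^{s/2+\alpha})$, which gives, for every $\tau>0$,
\begin{equation*}
  |f(\tau)| \le \Big(\sum_k \lambda_k^{-s}|a_k|^2\Big)^{1/2}\Big(\sum_k \lambda_k^{\gamma}e^{-2\tau\lambda_k}\Big)^{1/2} = \norm{\mathbf{a}}_{H^{-s}_\mu}\,G(\tau)^{1/2}.
\end{equation*}
So it suffices to prove $G(\tau)\lesssim\tau^{-n/2-\max\{\gamma,0\}}$ on $(0,\infty)$, with a constant depending only on $n,s,\alpha$ and the Weyl constants, hence independent of $\tau$ and of $\mathbf a$.

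The base case is $\gamma=0$. Using $\lambda_k\ge c_1 k^{2/n}$ from Proposition~\ref{p:weylmu} and comparing the decreasing function $k\mapsto e^{-\tau c_1 k^{2/n}}$ with its integral $\int_0^\infty e^{-\tau c_1 x^{2/n}}\,dx$, which equals a constant times $\tau^{-n/2}$, yields $\sum_k e^{-\tau\lambda_k}\lesssim\tau^{-n/2}$; for large $\tau$ one first peels off the $k=1$ term, which decays exponentially and is therefore also $\lesssim\tau^{-n/2}$. For general $\gamma$ I would factor $e^{-2\tau\lambda_k}=e^{-\tau\lambda_k}\,e^{-\tau\lambda_k}$ and absorb the polynomial weight into one factor: when $\gamma\ge 0$ one has $\sup_{x>0}x^\gamma e^{-\tau x}\lesssim\tau^{-\gamma}$, so $G(\tau)\le(\sup_{x>0}x^\gamma e^{-\tau x})\sum_k e^{-\tau\lambda_k}\lesssim\tau^{-\gamma-n/2}$; when $\gamma<0$ the map $x\mapsto x^\gamma e^{-\tau x}$ is decreasing on $[\lambda_1,\infty)$, so $\lambda_k^\gamma e^{-\tau\lambda_k}\le\lambda_1^\gamma e^{-\tau\lambda_1}$ and $G(\tau)\le\lambda_1^\gamma e^{-\tau\lambda_1}\sum_k e^{-\tau\lambda_k}\lesssim\tau^{-n/2}$. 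In both cases $G(\tau)^{1/2}\lesssim\tau^{-n/4-\max\{s/2+\alpha,0\}}$.

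Combining this with the Cauchy--Schwarz display gives $|f(\tau)|\lesssim\tau^{-n/4-\max\{s/2+\alpha,0\}}\norm{\mathbf a}_{H^{-s}_\mu}$ for all $\tau>0$; since the exponent is at most $-n/4<0$ the right-hand side is non-increasing in $\tau$, so its supremum over $\tau\ge a$ is attained at $\tau=a$, which is claim (1). For claim (2) I would apply the very same Cauchy--Schwarz estimate to the tail $\sum_{k\ge K}a_k\lambda_k^\alpha e^{-\tau\lambda_k}$, bounding it by $\big(\sum_{k\ge K}\lambda_k^{-s}|a_k|^2\big)^{1/2}G(a)^{1/2}$ uniformly for $\tau\ge a$, using that $G$ is non-increasing; the first factor is the tail of a convergent series and tends to $0$ as $K\to\infty$, so the series defining $f$ converges uniformly on $[a,\infty)$. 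Claim (3) is then immediate: each partial sum is continuous, a uniform limit of continuous functions is continuous, and $a>0$ was arbitrary, so $f$ is continuous on $(0,\infty)$.

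The only genuinely delicate point is the base-case estimate $\sum_k e^{-\tau\lambda_k}\lesssim\tau^{-n/2}$ holding uniformly on the whole half-line $\tau\in(0,\infty)$: the integral comparison is transparent for $\tau\lesssim 1$, but for large $\tau$ one must separate the leading eigenvalue to exhibit the true exponential decay rather than settle for a coarser polynomial bound that fails to be uniform. Everything downstream of that estimate is Cauchy--Schwarz and one-variable calculus.
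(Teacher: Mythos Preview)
Your proof is correct and follows the same overall strategy as the paper: Cauchy--Schwarz reduces everything to bounding $G(\tau)=\sum_k\lambda_k^{s+2\alpha}e^{-2\tau\lambda_k}$, and Weyl's Law (Proposition~\ref{p:weylmu}) controls this sum. The execution differs in two places, both to your advantage. For the case $\gamma=s+2\alpha>0$ the paper (Lemma~\ref{l:sum_bound}) splits the sum at the index $k_1$ where $k\mapsto k^{2\gamma/n}e^{-c_1\tau k^{2/n}}$ attains its maximum and estimates each piece separately; your device of factoring $e^{-2\tau\lambda_k}=e^{-\tau\lambda_k}\cdot e^{-\tau\lambda_k}$ and invoking the one-line calculus bound $\sup_{x>0}x^\gamma e^{-\tau x}\lesssim\tau^{-\gamma}$ is cleaner and avoids the split entirely. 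For uniform convergence the paper shows that the tail $\sum_{k>m}\lambda_k^{s+2\alpha}e^{-2a\lambda_k}$ vanishes via another round of integral comparison, whereas you push the tail onto the coefficient side, $\bigl(\sum_{k\ge K}\lambda_k^{-s}|a_k|^2\bigr)^{1/2}$, which is trivially the tail of a convergent series; this is simpler and uses only that $G(a)<\infty$. One minor remark: your caution about the base case for large $\tau$ is unnecessary, since the integral comparison $\sum_{k\ge1}e^{-\tau c_1 k^{2/n}}\le\int_0^\infty e^{-\tau c_1 x^{2/n}}\,dx=\Gamma(1+n/2)(c_1\tau)^{-n/2}$ holds with the same constant for every $\tau>0$, so no peeling of the $k=1$ term is required.
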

To prove Proposition \ref{p:sum_bound2}, we first have the following
lemma.
\begin{lem}
  \label{l:sum_bound}
  Let $\lambda_k$ be the eigenvalues and eigenfunctions of $L$,
  \eqref{e:linop}.  There exists a constant $C>0$, independent of
  $\tau$, such that for all $\tau >0$,
  \begin{equation}
    \label{e:summation_bound}
    \sum_{k=1}^\infty \lambda_k^{\alpha} e^{-\tau
      \lambda_k}\leq C \tau^{-n/2 - \max\set{\alpha, 0}}.
  \end{equation}
\end{lem}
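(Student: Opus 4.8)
The plan is to reduce the general exponent $\alpha$ to the case $\alpha = 0$, for which Weyl's law together with a one-line integral comparison suffices.

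I would first dispose of $\alpha = 0$. By Proposition~\ref{p:weylmu} we have $\lambda_k \geq c_1 k^{2/n}$, and since $x\mapsto e^{-c_1\tau x^{2/n}}$ is decreasing on $(0,\infty)$,
\[
\sum_{k=1}^\infty e^{-\tau\lambda_k} \;\leq\; \sum_{k=1}^\infty e^{-c_1\tau k^{2/n}} \;\leq\; \int_0^\infty e^{-c_1\tau x^{2/n}}\,dx \;=\; \tfrac{n}{2}\,\Gamma\!\paren{\tfrac n2}\,(c_1\tau)^{-n/2},
\]
the last equality being the substitution $y = x^{2/n}$. This is $\lesssim \tau^{-n/2}$ with a constant depending only on $n$ and $c_1$, which is \eqref{e:summation_bound} for $\alpha = 0$. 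The case $\alpha \leq 0$ then follows at once: since $\lambda_k \geq \lambda_1 > 0$ and $t\mapsto t^\alpha$ is nonincreasing, $\lambda_k^\alpha \leq \lambda_1^\alpha$, hence
\[
\sum_{k=1}^\infty \lambda_k^\alpha e^{-\tau\lambda_k} \;\leq\; \lambda_1^\alpha \sum_{k=1}^\infty e^{-\tau\lambda_k} \;\lesssim\; \tau^{-n/2} \;=\; \tau^{-n/2-\max\set{\alpha,0}}.
\]

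For $\alpha > 0$ I would split the exponential and pull out the polynomial factor as a supremum: from $e^{-\tau\lambda_k} = e^{-\tau\lambda_k/2}e^{-\tau\lambda_k/2}$ and
\[
\lambda_k^\alpha e^{-\tau\lambda_k/2} \;\leq\; \sup_{\lambda>0}\lambda^\alpha e^{-\tau\lambda/2} \;=\; \paren{\tfrac{2\alpha}{e}}^{\alpha}\tau^{-\alpha},
\]
one obtains $\sum_k \lambda_k^\alpha e^{-\tau\lambda_k} \leq (2\alpha/e)^\alpha \tau^{-\alpha}\sum_k e^{-(\tau/2)\lambda_k}$; the remaining sum is the $\alpha = 0$ estimate with $\tau$ replaced by $\tau/2$, so it is $\lesssim \tau^{-n/2}$, whence $\sum_k \lambda_k^\alpha e^{-\tau\lambda_k}\lesssim \tau^{-n/2-\alpha}$. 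Combining the two cases gives \eqref{e:summation_bound} for all $\tau>0$, with $C$ depending on $n$, $\alpha$, $c_1$ and $\lambda_1$ but not on $\tau$.

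There is no deep obstacle here; the points needing a little care are that the integral comparison $\sum_{k\geq1}g(k)\leq\int_0^\infty g$ requires $g$ to be monotone decreasing — which is precisely why one should not bound $\lambda_k^\alpha$ by a power of $k$ when $\alpha<0$, such a power being non-integrable near $0$ — and the bookkeeping of which constants depend on which parameters. The device that makes the argument uniform in $\tau$ and in the sign of $\alpha$ is the splitting $e^{-\tau\lambda}=e^{-\tau\lambda/2}e^{-\tau\lambda/2}$, which cleanly separates the polynomial growth from a summable tail.
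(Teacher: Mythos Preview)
Your argument is correct. For $\alpha\le 0$ you do exactly what the paper does: bound $\lambda_k^\alpha$ by $\lambda_1^\alpha$, apply Weyl's lower bound $\lambda_k\ge c_1 k^{2/n}$, and compare the resulting monotone sum with an integral.

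For $\alpha>0$ your route diverges from the paper's. The paper first replaces $\lambda_k$ by $c_2 k^{2/n}$ (upper Weyl bound) inside $\lambda_k^\alpha$ and by $c_1 k^{2/n}$ inside the exponential, then splits the resulting sum at the index $k_1\approx(\alpha/c_1\tau)^{n/2}$ where the summand $k^{2\alpha/n}e^{-c_1\tau k^{2/n}}$ changes from increasing to decreasing; the head is estimated by $k_1$ times the maximum value and the tail by the integral $\int_{k_1}^\infty e^{-c_1\tau k^{2/n}}k^{2\alpha/n}\,dk$. Your device---writing $e^{-\tau\lambda_k}=e^{-\tau\lambda_k/2}e^{-\tau\lambda_k/2}$ and pulling out $\sup_{\lambda>0}\lambda^\alpha e^{-\tau\lambda/2}=(2\alpha/e)^\alpha\tau^{-\alpha}$---is shorter and avoids both the head/tail split and any use of the upper Weyl bound; it reduces the whole $\alpha>0$ case to the $\alpha=0$ estimate already established. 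The paper's computation, on the other hand, makes the dependence of the constant on $\alpha$ and $n$ fully explicit through the Gamma function, which your bound $(2\alpha/e)^\alpha\cdot 2^{n/2}\cdot\tfrac{n}{2}\Gamma(\tfrac n2)c_1^{-n/2}$ also does, just in a slightly different form. Either way the constant depends on $\alpha$, $n$, $c_1$, and $\lambda_1$ but not on $\tau$, as required.
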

The reader should rightfully expect the lefthand side of
\eqref{e:summation_bound} to grow as $\alpha \to \infty$.  Indeed, the
constant $C$ depends on $\alpha$ and will grow.  However, as $\alpha$
is fixed, and we are interested in an estimate in $\tau$, this is
suppressed.
\begin{proof}
  For $\alpha \leq 0$,
  \begin{equation*}
    \begin{split}
      \sum_{k=1}^{\infty} e^{-\tau \lambda_k} \lambda_k^\alpha &\leq
      \sum_{k=1}^{\infty} e^{-\tau \lambda_k} \lambda_1^\alpha
      \leq \sum_{k=1}^{\infty} e^{-c_1 \tau  k^{2/n}} \lambda_1^\alpha\\
      & \leq \lambda_1^\alpha\int_0^\infty e^{-c_1 \tau k^{2/n}}dk =
      \lambda_1^\alpha (c_1 \tau)^{-n/2} \Gamma \bracket{1 +
        \frac{n}{2}}.
    \end{split}
  \end{equation*}
  In the above computation, we approximated the sum as the lower
  Riemann sum of the integral.

  For $\alpha > 0$, we begin by estimating
  \begin{equation*}
    \sum_{k=1}^{\infty} e^{-\tau \lambda_k} \lambda_k^\alpha\leq\sum_{k = 1}^{\infty} e^{-c_1\tau k^{2/n}}
    c_2^{\alpha} k^{2\alpha / n }.
  \end{equation*}
  For sufficiently large $k$,
  \[
  k \geq k_1 \equiv \left\lceil{\paren{\frac{\alpha}{ c_1 \tau}}^{n/
        2}}\right\rceil,
  \]
  the summand is monotonically decreasing, while for $k < k_1$, it is
  monotonically increasing.  Splitting the sum up,
  \begin{equation*}
    \begin{split}
      \sum_{k = 1}^{\infty} e^{-c_1\tau k^{2/n}} k^{2\alpha/ n} &=
      \sum_{k=1}^{k_1} e^{-c_1\tau k^{2/n}} k^{2\alpha/ n + 1 } +
      \sum_{k=k_1+1}^{\infty} e^{-c_1\tau k^{2/n}}
      k^{2\alpha/ n } \\
      & \leq e^{-c_1 \tau} \sum_{k=1}^{k_1} k^{2\alpha/ n }+
      \sum_{k=k_1+1}^{\infty} e^{-c_1\tau k^{2/n}} k^{2\alpha/ n }.
    \end{split}
  \end{equation*}
  Crudely bounding the first sum in terms of a $\max$, and treating
  the latter sum as a lower Riemann approximations of an integral,
  \begin{equation*}
    \begin{split}
      &\sum_{k = 1}^{\infty} e^{-c_1\tau k^{2/n}} k^{2\alpha/ n } \leq
      e^{-c_1 \tau} k_1 \cdot k_1 ^{2\alpha/ n }+ \int_{k_1}^\infty e^{-c_1\tau k^{2/n}} k^{2\alpha/ n  }dk \\
      &\quad\leq e^{-c_1 \tau} \bracket{\paren{\frac{\alpha}{ c_1
            \tau}}^{n/2} +
        1}^{2\alpha/ n +1 }+ \int_{0}^\infty  e^{-c_1\tau k^{2/n}} k^{2\alpha/ n }dk \\
      & \quad\leq \paren{\frac{c_1 \tau}{ \alpha }}^{-n/2-\alpha }
      e^{-c_1 \tau}\bracket{1 +\paren{\frac{c_1 \tau}{ \alpha }}^{n/2}
      }^{2\alpha/n +1}+
      \frac{n}{2}(c_1\tau)^{-n/2 - \alpha} \Gamma\bracket{\frac{n}{2}+\alpha}\\
      &\quad \lesssim \tau^{-n/2 -\alpha}.
    \end{split}
  \end{equation*}
\end{proof}
The integrals were computed using Mathematica, with the commands
\begin{verbatim}
Integrate[Exp[-c*t*k^(2/n)],{k,0,Infinity}]

Integrate[Exp[-c*t*k^(2/n)]*k^(2*a/n+1),{k,0,Infinity}]
\end{verbatim}

Now we prove Proposition \ref{p:sum_bound2}.
\begin{proof}
  We first observe that $f$ is well defined and bounded:
  \begin{equation*}
    \abs{f(\tau)} \leq \sum_{k=1}^\infty \abs{a_k} \lambda_k^\alpha
    e^{-\tau \lambda_k}\leq \set{\sum_{k=1}^\infty\lambda^{s+ 2\alpha }
      e^{- 2\lambda_k \tau }  }^{1/2}\norm{\mathbf{a}}_{H^{-s}_\mu}.
  \end{equation*}
  Applying Lemma \ref{l:sum_bound} with $\alpha\mapsto s + 2\alpha $
  and $\tau \mapsto 2 a$,
  \[
  \sum_{k=1}^\infty\lambda^{s + 2\alpha } e^{- 2 \lambda_k a }
  \lesssim (2a)^{-n/2 - \max\set{s+ 2\alpha , 0} }.
  \]

  To prove uniform convergence, let
  \[
  f_m(\tau) \equiv \sum_{k=1}^{m} a_k \lambda_k^\alpha e^{- \tau
    \lambda_k}
  \]
  denote the partial sum.  Obviously, each partial sum is continuous
  in $\tau$.  Then
  \begin{equation*}
    \begin{split}
      \abs{f(\tau)-f_m(\tau)} &\leq \sum_{k={m+1}}^\infty \abs{a_k}
      \lambda_k^\alpha e^{-\tau \lambda_k}\\
      & \leq \set{\sum_{k=m+1}^\infty \lambda_k^{s + 2\alpha }
        e^{-2\lambda_k a} }^{1/2}
      \norm{P_{[m+1,\infty)}\mathbf{a}}_{H_\mu^{-s}}\\
      &\leq \norm{\mathbf{a}}_{H_\mu^{-s}}\set{\sum_{k=m+1}^\infty
        \lambda_k^{s+ 2\alpha} e^{-2\lambda_k a} }^{1/2}.
    \end{split}
  \end{equation*}
  Examining the sum,
  \begin{equation*}
    \begin{split}
      \sum_{k=m+1}^\infty \lambda_k^{s+ 2\alpha} e^{-2\lambda_k a} &
      \lesssim \sum_{k=m+1}^\infty k^{2s/n + 4 \alpha /n} e^{-2 c_1 a
        k^{2/n}}.
    \end{split}
  \end{equation*}
  Taking $m$ sufficiently large, the summand will be strictly
  decreasing in $k$, so we can treat it as a lower Riemann sum for the
  integral
  \[
  \int_{m}^\infty k^{2s/n + 4 \alpha /n} e^{-2c_1 a k^{2/n}} dk.
  \]
  Changing variables by letting $k^{2/n} = l$,
  \[
  \sum_{k=m+1}^\infty \lambda_k^{s+ 2\alpha} e^{-2\lambda_k a}\lesssim
  \int_{m^{2/n}}^\infty l^{s+ 2\alpha + n/2 -1} e^{- 2c_1 a l} dl.
  \]
  If ${s + 2\alpha + n/2 -1} \leq 0$, then
  \[
  \sum_{k=m+1}^\infty \lambda_k^{s + 2\alpha} e^{-2\lambda_k
    a}\lesssim \int_{m^{2/n}}^\infty e^{- 2c_1 a l} dl = \frac{1}{2c_1
    a}e^{-2 m^{2/n} c_1 a}.
  \]
  On the other hand, if ${s+ 2\alpha + n/2 -1}> 0$, we can trade some
  of the exponential decay to eliminate the algebraic term,
  \[
  \sum_{k=m+1}^\infty \lambda_k^{s + 2\alpha} e^{-2\lambda_k
    a}\lesssim \int_{m^{2/n}}^\infty e^{- c_1 a l} dl = \frac{1}{c_1
    a}e^{-m^{2/n} c_1 a}.
  \]
  In either case, we see that for any $a>0$,
  \[
  \lim_{m\to \infty}\sup_{\tau \geq a}\abs{f(\tau) - f_m(\tau)} = 0.
  \]
  Since the partial sums converge uniformly to $f$, it is now a
  classical result to conclude that $f$ is continuous for $\tau \geq a
  > 0$, \cite{rudin1976principles}.
\end{proof}

\bibliographystyle{plain}

\bibliography{acc_dyn}

\end{document}